\algnewcommand\algorithmicinput{\textbf{INPUT:}}
\algnewcommand\INPUT{\item[\algorithmicinput]}
\algnewcommand\algorithmicoutput{\textbf{OUTPUT:}}
\algnewcommand\OUTPUT{\item[\algorithmicoutput]}
\newtheorem{theorem}{Theorem}
\newtheorem{lemma}[theorem]{Lemma}
\newtheorem{proposition}[theorem]{Proposition}
\newtheorem{corollary}[theorem]{Corollary}
\newtheorem{definition}{Definition}
\newtheorem{assumption}{Assumption}
\DeclareMathOperator*{\argmin}{arg\,min}
\DeclareFontFamily{U}{mathx}{\hyphenchar\font45}
\DeclareFontShape{U}{mathx}{m}{n}{
	<5> <6> <7> <8> <9> <10>
	<10.95> <12> <14.4> <17.28> <20.74> <24.88>
	mathx10
}{}
\DeclareSymbolFont{mathx}{U}{mathx}{m}{n}
\DeclareMathAccent{\widecheck}{0}{mathx}{"71}
\DeclareMathAccent{\wideparen}{0}{mathx}{"75}
\title{  Lattice partition recovery with dyadic CART}
\author[1]{Oscar Hernan Madrid Padilla}
\author[2]{Yi Yu}
\author[3]{Alessandro Rinaldo}
\affil[1]{Department of Statistics, University California, Los Angeles}
\affil[1]{Department of Statistics, University of Warwick}
\affil[3]{Department of Statistics \& Data Science, Carnegie Mellon University}
\date{\today}
\begin{document}

\maketitle

\begin{abstract}

We study piece-wise constant signals corrupted by additive Gaussian noise over a $d$-dimensional lattice. Data of this form naturally arise in a host of applications, and the tasks of signal detection or testing, de-noising and  estimation have been studied extensively in the statistical and signal processing literature. In this paper we consider instead the problem of partition recovery, i.e.~of estimating the partition of the lattice induced by the constancy regions of the unknown signal, using the computationally-efficient dyadic classification and regression tree (DCART)  methodology proposed by \citep{donoho1997cart}.   We prove that, under appropriate regularity conditions on the shape of the partition elements, a DCART-based procedure consistently estimates the underlying partition at a rate of order $\sigma^2 k^* \log (N)/\kappa^2$, where $k^*$ is the minimal number of rectangular sub-graphs obtained using recursive dyadic partitions supporting the signal partition, $\sigma^2$ is the noise variance, $\kappa$ is the minimal magnitude of the signal difference among contiguous elements of the partition and $N$ is the size of the lattice. Furthermore,  under stronger assumptions, our method attains a sharper estimation error of order $\sigma^2\log(N)/\kappa^2$, independent of $k^*$, which we show to be minimax rate optimal.  Our theoretical guarantees further extend to the partition estimator based on the optimal regression tree estimator (ORT) of \cite{chatterjee2019adaptive} and to the one obtained through an NP-hard exhaustive search method.  We corroborate our theoretical findings and the effectiveness of  DCART for partition recovery in simulations.

\textbf{Keywords}: 

 Optimal decision trees, localization, consistency, minimax optimality 
 
\end{abstract}

\section{Introduction}\label{sec-intro}

Suppose we observe a noisy realization of a structured, piece-wise constant signal supported over a $d$-dimensional square lattice (or grid graph) $L_{d, n} = \{1, \ldots, n\}^d$.  Data that can be modeled in this manner arise in several application areas, including in satellite imagery \citep[e.g.][]{stroud2017bayesian, whiteside2020semi}, computer vision \citep[e.g.][]{bian2017gms, wirges2018object}, medical imaging \citep[e.g.][]{roullier2011multi, lang2014adaptive}, and neuroscience \citep[e.g.][]{fedorenko2013broad,tansey2018false}.  Our goal is to estimate the constancy regions of the underlying  signal. Specifically, we assume that the data $y \in \mathbb{R}^{L_{d, n}}$ are such that, for each coordinate $i \in L_{d, n}$,
\begin{equation}\label{eq-model}
	y_i = \theta^*_i + \epsilon_i,
\end{equation}
where $(\epsilon_i,  i \in L_{d, n})$ are i.i.d.~$\mathcal{N}(0, \sigma^2)$ noise variables and the unknown signal $\theta^* \in \mathbb{R}^{L_{d, n}}$ is assumed to be piece-wise constant over an unknown rectangular partition of~$L_{d, n}$. We define a subset 
$R \subset L_{d, n}$ to be a \emph{rectangle} if $R = \prod_{i=1}^d [a_i, b_i]$, where $[a, b] = \{j \in \mathbb{Z}: \, a \leq j \leq b\}$, $a, b \in \mathbb{Z}$.  A \emph{rectangular partition} of $L_{d,n}$, $\mathcal{P}$, is a collection of disjoint rectangles $\{R_l\} \subset L_{d, n}$, satisfying $\cup_{R \in \mathcal{P}} R = L_{d,n}$.  To each vector in $L_{d, n}$, there corresponds a (possibly trivial) rectangular partition.
\begin{definition}\label{def:0}
	A \emph{rectangular partition associated with a vector $\theta \in \mathbb{R}^{L_{d, n}}$} is a rectangular partition $\{R_l\}_{l \in [1, k]}$ of $L_{d, n}$, such that $\theta$ takes on constant values over each $R_l$. For a vector $\theta \in \mathbb{R}^{L_{d, n}}$, we let  $k(\theta)$ be the smallest positive integer such that there exists a rectangular partition with $k(\theta)$ elements and associated with $\theta$. 
\end{definition}

In this paper, we are interested in recovering a rectangular partition  associated with the signal $\theta^*$ in~\eqref{eq-model}. 
A  complication immediately arises when $d \geq 2$: the rectangular partition associated with a given $\theta$ is not necessarily unique. This fact is illustrated in \Cref{fig-1}, where the left-most plot depicts the lattice supported  vector $\theta$, consisting  of a rectangle of elevated value (in grey) against a background (in white).  For such $\theta$, we show three possible rectangular partitions, each of which comprised of five rectangles (the second, third and fourth plots). In fact, the partition recovery problem is well defined, as long as we consider coarser partitions comprised by unions of adjacent rectangles instead of individual rectangles: see \Cref{def1} below for details. We remark that this issue does not occur in the univariate ($d = 1$) case, for which the partition recovery task has been  thoroughly studied in the change point literature; see \cref{sec:literture} below. Thus, we assume that $d \geq 2$.

\begin{figure}
	\includegraphics[width = \textwidth, height = 0.15\textheight]{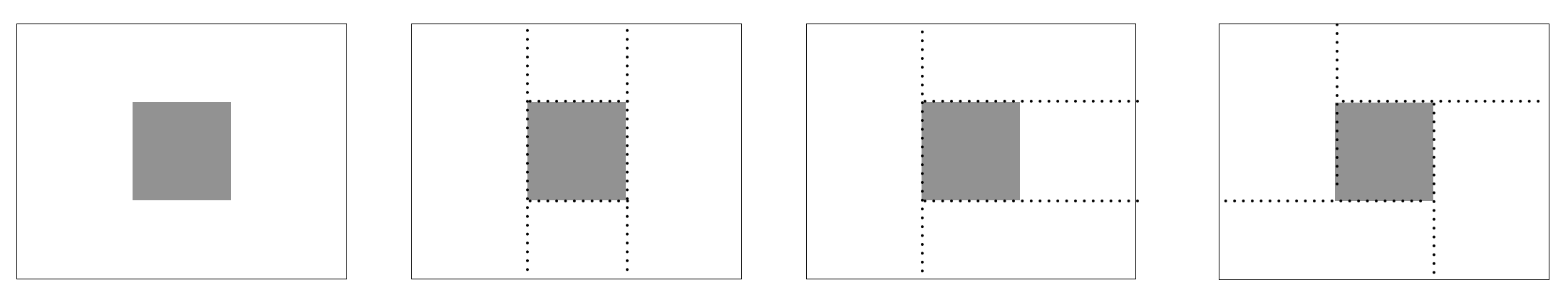}	
	\caption{Rectangular partitions associated to a vector are not necessarily unique.}\label{fig-1}
\end{figure}

For the purpose of estimating the rectangular partition associated with $\theta^*$ (or, more precisely, its unique coarsening as formalized in \Cref{def1}), we resort to the dyadic classification and regression tree (DCART) algorithm of \cite{donoho1997cart}. This is a polynomial-time  decision-tree-based algorithm developed for de-noising purposes for signals over lattices, and is a variant of the classification and regression trees (CART)~\cite{breiman1984classification}. See \Cref{sec-prob-setup} below for a description of DCART. The optimal regression trees (ORT) estimator,  recently proposed in \cite{chatterjee2019adaptive}, further builds upon DCART and delivers sharp theoretical guarantees for signal estimation while retaining good computational properties -- though we should mention that in our experiments we have found DCART to be significantly faster. Both DCART and its more sophisticated version ORT can be seen as approximations to the NP-hard estimator
\begin{equation}\label{eq-NP}
	\theta_1 = \argmin_{\theta \in \mathbb{R}^{L_{d, n}}}\left\{2^{-1}\|y - \theta\|^2 + \lambda k(\theta)\right\}, 
\end{equation}
where $k(\theta)$ is given in \Cref{def:0}, $\|\cdot\|$ is the vector (or Euclidean) $\ell_2$-norm and $\lambda > 0$ a tuning parameter. 
DCART modifies the above, impractical optimization problem by restricting only to 
dyadic rectangular partitions. This leads to  significant gains in computational efficiency without sacrificing on the statistical performance.
Indeed, decision-tree-based algorithms have been shown to be optimal under various settings for the purpose of signal estimation; see \cite{chatterjee2019adaptive, kaul2021segmentation}.
In this paper, we further demonstrate their effectiveness  for the different task of partition recovery. In particular,  we show how simple modifications of the  DCART (or ORT) estimator yield practicable procedures for partition recovery with  good theoretical guarantees and derive novel localization rates.

Note that, there is a wide array of applications focusing on detecting the regions rather than estimating the background signals, especially in surveillance and environment monitoring.  Our work is motivated by all the applications/problems considered in the large literature on biclustering, where the underlying signal is assumed to be piecewise constant. Estimating the boundary of the partition is the most refined and difficult task in these settings. Thus, any of the many scenarios in which biclustering is relevant can be used to motive our task. An analogous observation holds also for the more general problem of identifying an anomalous cluster (sub-graph) in a network, a problem that has been tackled (for testing purposes only) by \cite{arias-castro2011}, the reference therein provide numerous examples of applications.  On a high-level, the relationship between the partition and signal recoveries can be thought of the relationship between the estimation consistency and support consistency in a high-dimensional linear regression problems. They can be done by almost identical algorithms but the theoretical results rely on different sets of conditions.

The paper is organized as follows. In the rest of this section we formalize the problem settings and the task of partition recovery, and describe the DCART procedure. We further summarize our main findings and discuss  related literature. \Cref{sec-theory} contains our main results about one- and two-sided consistency of DCART and its  modification. In \Cref{sec-optimal} we derive a minimax lower bound stemming from the case of one rectangular region of elevated signal against pure background. Illustrative   simulations corroborating our findings can be found in \Cref{sec-numerical}. The Supplementary Material contains the proofs.

\noindent {\bf Notation}\\
We set $N = n^d$, the size of the lattice $L_{d, n}$, where we recall that $d \geq 2$ is assumed fixed throughout.  For any integer $m \in \mathbb{N}^*$, let $[m] = [1, m]$.  Given a rectangular partition $\Pi$ of $L_{d, n}$, let $S(\Pi)$ be the linear subspace of $\mathbb{R}^{L_{d,n}}$ consisting of vectors with constant values  on each rectangle in $\Pi$ and let $O_{S(\Pi)}(\cdot)$ be the orthogonal projection onto $S(\Pi)$.  For any $R \subset L_{d, n}$ and $\theta \in \mathbb{R}^{L_{d, n}}$, let $\bar{\theta}_R = |R|^{-1}\sum_{i \in R}\theta_i$, where $|\cdot|$ is the cardinality of a set.  
Two rectangles   $R_1, R_2 \in \Pi$ are said to be {\it adjacent} if there exists $l \in [d]$ such that $R_1$ and $R_2$ share a boundary along $e_l$ and one is a subset of the other in the hyperplane defined by $e_l$, the $l$th standard basis vector in $\mathbb{R}^d$.  See \Cref{def-adjacent} for a rigorous definition. This concept of adjacency is specifically tailored to -- and in fact only valid for -- dyadic (and hierarchical, in the sense specified by \cite{chatterjee2019adaptive}) rectangular partitions, which are most relevant for this paper.  For any subsets $A, B \subset L_{d, n}$, define $\mathrm{dist}(A, B) = \min_{a \in A, b \in B}\|a-b\|$.  Throughout this paper, we will use the $\ell_2$-norm as the vector norm.

\subsection{Problem setup}\label{sec-prob-setup}

We begin by introducing two key parameters for the model specified in \eqref{eq-model} and a well-defined  notion of rectangular partition induced by $\theta^*$.
\begin{definition}[Model parameters, induced partitions] \label{def1}
	Let $\theta^*$  as in \eqref{eq-model} and  $\{R_j^*\}_{j \in [m]}$ be a rectangular partition of $L_{d, n}$ associated with $\theta^*$.  Consider the graph $G^* = (E^*, V^*)$, where $V^* = [m]$ and $E^* = \{(i, j): \, \bar{\theta}^*_{R_i^*} = \bar{\theta}^*_{R_j^*},\, R_i^*\mbox{ and } R_j^* \mbox{ are adjacent}\}$.  Let $\{C_l^*\}_{l \in [L]}$ be all connected components of $G^*$ and define $\Lambda^* = \Lambda^*(\theta^*) = \{\cup_{j \in C_1^*}R_j^*, \ldots, \cup_{j \in C_L^*}R_j^*\}$ as the partition (not necessarily rectangular) induced by $\theta^*$.  We say that the union of rectangles  $\cup_{j \in C_s^*} R_j^*$ and $\cup_{j \in C_t^*}R_j^*$, $s, t \in [L]$, $s \neq t$, are adjacent, if and only if there exists $(i, j) \in C_s^* \times C_t^*$ such that $R_i^*$ and $R_j^*$ are adjacent.
	
	Let $\kappa$ and $\Delta$ be the minimum jump size and minimal rectangle size, respectively, formally defined as
	\[
	\kappa = \underset{a \in  A, b\in B, \, \, A, B \in \Lambda^*, \,  \theta_a^* \neq \theta_b^* }{\min}\,  \vert  \theta_a^*-\theta_b^*\vert \quad \mbox{and} \quad \Delta  = \underset{j \in [m]}{\min } \vert R_j^*\vert.
	\]
\end{definition}
It is important to emphasize the difference between a partition associated with  $\theta^*$, as described in \Cref{def:0}, which may not be unique, and the partition  $\Lambda^*$  induced by $\theta^*$ of \Cref{def1}, which is instead unique and thus describes a well-defined functional of $\theta^*$. The parameters $\kappa$ and $\Delta$ capture two complementary aspects of the intrinsic difficulty of the problem of estimating $\Lambda^*$; intuitively, one would expect the partition recovery task to be  more difficult when $\kappa$ and $\Delta$ are small (and $\sigma$ is large). Below, we will prove rigorously that this intuition is indeed correct. When $d=1$, both parameters, along with $\sigma$, have in fact been shown to fully characterize the change point localization task: see, e.g., \cite{wang2020univariate, verzelen2020optimal}. 

The partition recovery task can therefore be formulated as that of constructing an estimator $\widehat{\Lambda}$ of $\Lambda^*$, the induced partition  of $\theta^*$,  such that, as the sample size $N$ grows unbounded and with probability tending to one,  
\begin{equation}\label{eq-est-goal}
	|\widehat{\Lambda}| = |\Lambda^*| \quad \mbox{and} \quad \Delta^{-1}d_{\mathrm{Haus}}(\widehat{\Lambda}, \Lambda^*) = \Delta^{-1} \max_{A \in \Lambda^*} \min_{B \in \widehat{\Lambda}} |A \triangle B| \to 0,
\end{equation}
where $A \triangle B$ is the symmetric difference between $A$ and $B$. We refer to $d_{\mathrm{Haus}}(\widehat{\Lambda}, \Lambda^*)$ as the localization error for the partition recovery problem. 

\noindent \textbf{The dyadic classification and regression trees (DCART) estimator.}  In order to produce a computationally efficient estimator of $\Lambda^*$ satisfying the consistency requirements \eqref{eq-est-goal}, we deploy the DCART procedure \cite{donoho1997cart}, which can be viewed as an approximate  solution  to the problem in \eqref{eq-NP}.  Instead of optimizing over all vectors in $\mathbb{R}^{L_{d, n}}$, DCART minimizes the objective function only over vectors associated with a \emph{dyadic rectangular partition}, which is defined as follows.  Let $R = \prod_{i \in [d]} [a_i, b_i] \subset L_{d, n}$ be a rectangle.  A \emph{dyadic split} of $R$ chooses a coordinate $j \in [d]$, $l$ the middle point of $[a_j, b_j]$, and splits $R$ into
\[
R_1=\prod_{i \in [j-1]} [a_i,b_i] \times [a_j,l] \times \prod_{i\in[j+1, d]}[a_i,b_i] \mbox{ and } R_2=\prod_{i\in [j-1]} [a_i,b_i]   \times [l+1, b_j] \times   \prod_{i\in[j+1, d]} [a_i,b_i],
\]
with $[0] = [c_2, c_1] = \emptyset$, $c_2 > c_1$.  Assuming that $n$ is a power of 2,  starting from  $L_{d,n}$ itself, we proceed iteratively as follows.  Given the partition $\{R_u\}_{u \in [k]}$, one chooses a rectangle $R_u$ and performs a dyadic split on $R_u$ that leads to the largest reduction in the objective function.  Any partition constructed through a sequence of such steps is called a dyadic rectangular partition.  	With a pre-specified $\lambda > 0$, the DCART estimator is 
\begin{equation} \label{eqn:cart}
	\tilde{\theta} =  O_{S(\widetilde{\Pi})}(y), \, \mbox{ where }  \widetilde{\Pi}  \,\in \,\underset{  \Pi    \in \mathcal{P}_{\mathrm{dyadic},d,n}  }{\arg \min}\left\{2^{-1} \|   y-  O_{S(\Pi)}(y) \|^2   +  \lambda  \vert \Pi\vert    \right\},
\end{equation}
where  $\mathcal{P}_{\mathrm{dyadic},d,n}$  is the set of all dyadic rectangular partitions of $L_{d,n}$. As shown in \cite{donoho1997cart} and \cite{chatterjee2019adaptive}, the DCART estimator can be obtained via dynamic programming with a computational cost of $O(N)$.
Given any solution to (\ref{eqn:cart}), a natural (though, as we will see,  sub-optimal) estimator of the induced partition  of $\theta^*$ is $\widetilde{\Pi}$, the  partition associated with the resulting DCART estimator $\tilde{\theta}$. Importantly, by the property of DCART, and using the fact that the Gaussian errors have a Lebesgue density, $\widetilde{\Pi}$ is in fact a dyadic-rectangular partition  and is unique with probability one, and thus the resulting estimator is well-defined.  (Equivalently, the partition associated with $	\tilde{\theta}$ and the one induced by $\tilde{\theta}$ coincide.)

\subsection{Summary of our results}\label{sec:summary}
We briefly summarize the contributions made in this paper.

\textbf{One-sided consistency of DCART.}  Though  DCART is known to be a minimax rate-optimal estimator of $\theta^*$~\citep{chatterjee2019adaptive}, for the task of partition recovery its associated partition $\widetilde{\Pi}$ has sub-optimal performance. Indeed, due  to the  nature of the procedure, it is easy to construct cases in which the DCART over-partitions. See \Cref{fig-4}.
In these situations, DCART falls short with respect to the target conditions for consistency described in \eqref{eq-est-goal}.  Nonetheless, it is possible to prove a weaker one-sided consistency guarantee, in the sense that every resulting DCART rectangle is almost constant. In detail, let $\mathcal{R} = \{R_l\}_{l \in [\tilde{m}]}$ be the rectangular partition defined by $\widetilde{\Pi}$ in \eqref{eqn:cart}.  Then, we show in \Cref{sec-one-sided} that, for any $R_i \in \mathcal{R}$, there exists $S_i \subset R_i$ such that $\theta^*_t = \theta^*_u$, $u, t \in S_i$, and $\sum_{i \in [\tilde{m}]} \vert  R_i \backslash S_i\vert    \lesssim   \kappa^{-2} \sigma^2   k_{\mathrm{dyad}}(\theta^*) \log (N).$
Throughout, the quantity $k_{\mathrm{dyad}}(\theta^*)$ refers to the smallest positive integer $k$ such that there is a $k$-dyadic-rectangular-partition of $L_{d, n}$ associated with~$\theta^*$. 

\textbf{Two-sided consistency of DCART: A two-step estimator.}  In order to resolve the unavoidable over-partitioning issue with the naive DCART partition estimator and in order to prevent the occurrence of spurious clusters, we develop a more sophisticated two-step procedure. In the first step we use a variant of DCART that discourages the creation of rectangles of small volumes. In the second step, we apply a pruning algorithm merging  rectangles when their values are similar and the rectangles are not far apart.  With probability tending to one as $N \to \infty$, the final output $\widehat{\Lambda}$ satisfies \eqref{eq-est-goal} with $d_{\mathrm{Haus}}(\widehat{\Lambda}, \Lambda^*) \leq \kappa^{-2} \sigma^2   k_{\mathrm{dyad}}(\theta^*) \log (N)$.  This result is the first of its kind in the setting of lattice with arbitrary dimension $d \geq 2$.  This is shown in \Cref{sec-two-sided}.  

\textbf{Optimality: A regular boundary case.}  In \Cref{sec-optimal}, we consider the special case in which, for each rectangle in the  rectangular partitions induced by $\theta^*$  only has $O(1)$-many rectangles within distance of order $\sigma^2 \kappa^{-2} k_{\mathrm{dyad}}(\theta^*) \log (N)$.  While more restrictive than the scenarios we study in Sections~\ref{sec-one-sided} and \ref{sec-two-sided}, this setting is broader than the ones adopted in the cluster detection literature \citep[e.g.][]{arias2011detection, addario2010combinatorial}.  In this case, with probability approaching one as $N \to \infty$, the estimator $\widehat{\Lambda}$ satisfies \eqref{eq-est-goal} and $d_{\mathrm{Haus}}(\widehat{\Lambda}, \Lambda^*) \leq \kappa^{-2} \sigma^2 \log (N)$.  This error rate is shown to be minimax optimal, with a supporting minimax lower bound result in Proposition~\ref{prop-lb}. 

\subsection{Related and relevant literature}\label{sec:literture}
The problem at hand is closely related to several recent research threads involving detection and estimation of a structured signal.
When $d=2$, our settings can be viewed as a generalization of those used for the purpose of biclustering, i.e.~detection and estimation of sub-matrices. Though relatively recent, the literature on this topic is extensive, and the problem has been largely solved, both theoretically and methodologically. See, e.g., \cite{10.1214/09-AOAS239}, \cite{bicl}, \cite{butucea2013}, \cite{10.1214/14-AOS1300}, \cite{10.3150/11-BEJ394}, \cite{liu.castr:17}, \cite{ARIASCASTRO201729},
\cite{10.1214/16-AOS1488}, \cite{butucea:15}, \cite{JMLR:v17:15-617}, \cite{JMLR:v18:17-297}, \cite{DBLP:journals/jmlr/ChenX16}  and \cite{10.1214/09-AOAS239}. 

In the more general settings postulating a structured signal supported over a graph (including the grid graph), sharp results for the detection problem of testing the existence of a sub-graph or cluster in which the signal is different from the background are available in the literature: see, \cite{arias2008searching}, \cite{arias2011detection}, \cite{addario2010combinatorial}. 
Concerning the estimation problem, \cite{tibshirani2011solution}, \cite{sharpnack2012sparsistency}, \cite{chatterjee2019adaptive}, \cite{fan2018approximate} and  others, focused on de-noising the data and upper-bounding $\|\hat{\theta} - \theta^*\|_*$, where $\hat{\theta}$ is an estimator of $\theta^*$ and $\|\cdot\|_*$ is some vector norm.  In yet another stream of work \citep[e.g.][]{han2019global, brunel2013adaptive, korostelev1991asymptotically} concerned with empirical risk minimization, the problem is usually formulated as identifying a single subset.  More discussions can be found in \Cref{sec-compar}.

What sets  our contributions apart from those in the literature referenced above, which have primarily targeted detection and signal  estimation,  is the focus on the arguably different task of partition recovery. As a result, the estimation bounds we obtain are, to the best of our knowledge, novel as they do not stem directly from  the existing results.

It is also important to mention how the partition recovery task can be cast as a univariate change point localization problem. Indeed, when $d=1$, the two  coincide; see \cite{wang2020univariate, verzelen2020optimal}. However, the case of $d \geq 2$ becomes significantly more challenging  due to the lack of a total ordering over the lattice. Consequently, our results imply also novel localization rates for change point analysis in  multivariate settings.

\section{Consistency rates for the partition recovery problem}\label{sec-theory}

In this section, we investigate the theoretical properties of DCART  and of a two-step estimator also based on DCART for partition recovery.  We remark that instead of DCART, it is possible to deploy the ORT estimator \cite{chatterjee2019adaptive} or the NP-hard estimator \eqref{eq-NP} in our algorithms.
Our theoretical results still hold by simply replacing the term $k_{\mathrm{dyad}}(\theta^*)$, in both the upper bound and the choice of tuning parameters,  with the smallest $k$ such that there is a $k$-hierarchical-rectangular-partition ($k_{\mathrm{hier}}(\theta^*)$) or $k$-rectangular-partition ($k(\theta^*)$) of $L_{d, n}$ associated with $\theta^*$, respectively.  Thus, using these more complicated methodologies that scan over larger classes of rectangular partitions will result in smaller upper bounds in Theorems~\ref{thm1}, \ref{thm3} and \ref{cor1}.  See \cite{chatterjee2019adaptive} for details about the relationship of $k(\theta^*)$, $k_{\mathrm{dyad}}(\theta^*)$ and $k_{\mathrm{hier}}(\theta^*)$.

\subsection{One-sided consistency:  DCART }\label{sec-one-sided}

As illustrated in \Cref{fig-4}, the DCART procedure will  produce too fine a partition in many situations, even if the signal is directly observed (i.e., there is no noise). Thus, the naive partition estimator based on the constancy regions of the DCART estimator $\tilde{\theta}$ as in \eqref{eqn:cart} will inevitably suffer from the same drawback. Nonetheless, it is still possible to demonstrate a one-sided type of accuracy and even consistency for such a simple and computationally-efficient estimator. Specifically, in our next result we show that in every dyadic rectangle supporting the DCART estimator, $\theta^*$ has almost constant mean. The reverse does not hold however, as there is no guarantee that every rectangle in the partition induced by the true signal $\theta^*$ is mostly covered  by one dyadic DCART rectangle.

\begin{theorem} \label{thm1}
	Suppose that the data satisfy \eqref{eq-model} and that $\tilde{\theta}$ is the DCART  estimator \eqref{eqn:cart} obtained with  tuning parameter $\lambda = C\sigma^2 \log(N)$, where $C > 0$ is a sufficiently large absolute constant.  Let $\{R_j\}_{j \in [k(\tilde{\theta})]}$  be the associated  partition.  For any $j \in [k(\tilde{\theta})]$, let $S_j \subset R_j$ be the largest subset of $R_j$ such that $\theta^*$ is constant on $S_j$.  Then there exist absolute constants $C_1, C_2, C_3, C_4, C_5 > 0$ such that, with probability at least $1 - C_1 \exp\{-C_2\log(N)\}$, the following hold:
	\begin{itemize}
		\item global one-sided consistency:
		\begin{equation}\label{eq-thm1-1}
			\sum_{j \in [k(\tilde{\theta})]} \vert R_j \backslash S_j \vert\leq C_3\kappa^{-2} \sigma^2  k_{\mathrm{dyad}}(\theta^*)\log(N);
		\end{equation}	
		\item local one-sided consistency: for any $j \in [k(\tilde{\theta})]$, if $R_j \setminus S_j \neq \emptyset$,  then
		\begin{equation}\label{eq-thm1-2}
			\vert  R_j \backslash S_j \vert \leq C_4 \kappa_j^{-2}\sigma ^2   k_{\mathrm{dyad}}(\theta^*_{R_j}) \log(N),
		\end{equation}
		where  $\kappa_j = \min_{s, t \in R_j \,:\,\theta^*_s \neq  \theta_t^*} |\theta^*_s - \theta_t^*|$; and
		\item control on  over-partitioning:
		\begin{equation}\label{eq-thm1-3}
			k(\tilde{\theta}) \leq 2 k_{\mathrm{dyad}}(\theta^*) +  C_5.	
		\end{equation}
	\end{itemize}
\end{theorem}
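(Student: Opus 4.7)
The plan is to combine the optimality of the DCART objective with a uniform Gaussian concentration bound over dyadic projections, and then to convert the resulting $\ell_2$ rate into cell-wise cardinality bounds via a within-rectangle variance lower bound. Let $\Pi^*$ denote a minimum-size dyadic rectangular partition of $L_{d,n}$ associated with $\theta^*$, so $\theta^* \in S(\Pi^*)$ and $|\Pi^*| = k_{\mathrm{dyad}}(\theta^*)$. Since $\theta^*$ is feasible in \eqref{eqn:cart}, the basic inequality $\tfrac{1}{2}\|y-\tilde\theta\|^2 + \lambda k(\tilde\theta) \le \tfrac{1}{2}\|y-\theta^*\|^2 + \lambda k_{\mathrm{dyad}}(\theta^*)$ holds. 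Expanding, noting that $\tilde\theta - \theta^* \in S(\widetilde\Pi \vee \Pi^*)$ so the cross term is bounded by $\|\tilde\theta-\theta^*\|\cdot\|O_{S(\widetilde\Pi\vee\Pi^*)}\epsilon\|$, and invoking Young's inequality yields
\[
\tfrac{1}{4}\|\tilde\theta-\theta^*\|^2 + \lambda k(\tilde\theta) \le \bigl\|O_{S(\widetilde\Pi\vee\Pi^*)}\epsilon\bigr\|^2 + \lambda k_{\mathrm{dyad}}(\theta^*).
\]

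The next ingredient is a uniform concentration bound. Viewing dyadic partitions as prunings of a binary tree whose internal nodes carry one of $d$ orientation labels, the number of dyadic partitions of $L_{d,n}$ with $k$ leaves is at most $e^{O(k)}$, so a $\chi^2_k$ tail bound plus a union bound over partitions and over $k$ yields $\|O_{S(\Pi)}\epsilon\|^2 \le C\sigma^2|\Pi|\log N$ simultaneously over every dyadic $\Pi$, with probability at least $1 - N^{-c}$. The same binary-tree picture also gives $|\widetilde\Pi \vee \Pi^*| \le |\widetilde\Pi| + |\Pi^*|$. Plugging these into the previous display and choosing $\lambda = C'\sigma^2\log N$ with $C'$ a suitably large multiple of $C$ absorbs the $C\sigma^2 k(\tilde\theta)\log N$ contribution into the penalty and delivers, simultaneously, $\|\tilde\theta-\theta^*\|^2 \lesssim \sigma^2 k_{\mathrm{dyad}}(\theta^*)\log N$ and $k(\tilde\theta) \le 2 k_{\mathrm{dyad}}(\theta^*) + C_5$, establishing \eqref{eq-thm1-3}.

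For the global one-sided bound \eqref{eq-thm1-1}, I use the Pythagorean inequality $\|O_{S(\widetilde\Pi)}\theta^* - \theta^*\|^2 \le \|\tilde\theta-\theta^*\|^2$, so it suffices to lower-bound the within-rectangle sums by the quantity of interest. The key inequality is
\[
\sum_{i\in R_j}\bigl(\theta^*_i - \bar\theta^*_{R_j}\bigr)^2 \ge \tfrac{\kappa^2}{4}\,|R_j\setminus S_j|,
\]
which I view as the main obstacle. Its proof is a packing argument: the distinct values taken by $\theta^*$ on $R_j$ are pairwise at distance at least $\kappa$, so by the triangle inequality at most one of them can lie within $\kappa/2$ of $\bar\theta^*_{R_j}$; since $S_j$ is the largest level set in $R_j$, the count of coordinates whose value is farther than $\kappa/2$ from $\bar\theta^*_{R_j}$ is at least $|R_j\setminus S_j|$, and each such coordinate contributes at least $\kappa^2/4$ to the sum. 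Summing over $j$ and combining with the global squared-error bound yields \eqref{eq-thm1-1}.

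Finally, \eqref{eq-thm1-2} follows by running the same argument locally on a single cell $R_j$: if $\Pi^*_j$ is a minimum-size dyadic partition of $R_j$ associated with $\theta^*|_{R_j}$, replacing $R_j$ in $\widetilde\Pi$ by $\Pi^*_j$ produces a refinement whose DCART objective is no smaller than that of $\widetilde\Pi$. Expanding the resulting inequality via the ANOVA identity controls $\sum_{i\in R_j}(\theta^*_i - \bar\theta^*_{R_j})^2$ by $2\lambda(|\Pi^*_j|-1)$ plus the localized noise term $\|O_{S(\Pi^*_j)}\epsilon|_{R_j}\|^2 \lesssim \sigma^2 k_{\mathrm{dyad}}(\theta^*_{R_j})\log N$. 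Applying the within-rectangle lower bound with $\kappa_j$ in place of $\kappa$ then gives \eqref{eq-thm1-2}.
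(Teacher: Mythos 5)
Most of your architecture is sound and in places cleaner than the paper's: your packing argument for $\sum_{i\in R_j}(\theta^*_i-\bar\theta^*_{R_j})^2\ge \tfrac{\kappa^2}{4}|R_j\setminus S_j|$ is correct (and gives the same constant as the paper's pairing-plus-ANOVA computation); your Pythagorean reduction $\|O_{S(\widetilde\Pi)}\theta^*-\theta^*\|^2\le\|\tilde\theta-\theta^*\|^2$ avoids the paper's detour through $\bar y_{R_j}$ and its extra concentration event; and your leaf-exchange argument for the local bound is essentially the paper's use of Lemma~\ref{lem7} via the optimal-substructure property of the dyadic program. However, there is one step that fails as written, and it sits at the base of everything: the claim that $|\widetilde\Pi\vee\Pi^*|\le|\widetilde\Pi|+|\Pi^*|$. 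This is true for $d=1$ (breakpoints of the common refinement are the union of the two breakpoint sets) but false for $d\ge 2$, which is the regime the paper works in. Take $\widetilde\Pi$ to be the $n$ vertical strips and $\Pi^*$ the $n$ horizontal strips of $L_{2,n}$ --- both recursive dyadic partitions with $n$ leaves --- whose common refinement is the full grid with $n^2$ cells; in general $|\widetilde\Pi\vee\Pi^*|$ can be as large as $|\widetilde\Pi|\cdot|\Pi^*|$. With that dimension in the noise term, your display would only let you absorb the $k(\tilde\theta)$-dependent noise by taking $\lambda\gtrsim\sigma^2 k_{\mathrm{dyad}}(\theta^*)\log N$, contradicting the stated choice $\lambda=C\sigma^2\log N$ and destroying both the $\ell_2$ rate and \eqref{eq-thm1-3}.

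The repair is short but it is a different object: you do not need the common refinement, only a subspace containing $\tilde\theta-\theta^*$. Since $\tilde\theta\in S(\widetilde\Pi)$ and $\theta^*\in S(\Pi^*)$, you have $\tilde\theta-\theta^*\in S(\widetilde\Pi)+S(\Pi^*)$, whose dimension is at most $|\widetilde\Pi|+|\Pi^*|$ regardless of $d$; your chi-squared union bound over dyadic $\widetilde\Pi$ (with $\Pi^*$ fixed) then goes through for the projections onto these sum-subspaces, and the rest of your argument is unaffected. This is in substance what the paper's Lemma~\ref{lem7} does, following Lemma~9.1 of \cite{chatterjee2019adaptive}: it controls $\sup_{v\in S}\{(y-\theta^*)^{\top}(v-\theta^*)/\|v-\theta^*\|\}^2$ over $k$-dimensional partition subspaces $S$, which only costs dimension $k+1$ (for the translate by $\theta^*$) rather than the size of a refinement. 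Please replace $S(\widetilde\Pi\vee\Pi^*)$ by $S(\widetilde\Pi)+S(\Pi^*)$ throughout the first step; with that change your proof is correct and constitutes a legitimately different route to the same three conclusions.
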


We remark that $k(\tilde{\theta}) = k_{\mathrm{dyad}}(\tilde{\theta})$ due to the construction of $\tilde{\theta}$.
\Cref{thm1} consists of three results.  We have mentioned the over-partitioning issue of  DCART.  The bound \eqref{eq-thm1-3} shows that the over-partitioning is upper bounded, in the sense that the size of the partition induced by DCART is in fact of the same order of the size of the dyadic rectangular partition associated with $\theta^*$.

For each resulting rectangle $R_j$, \eqref{eq-thm1-2} shows that it is almost constant, in the sense that  if the signal possesses different values in $R_j$, then $R_j$ includes a subset $S_j$ which has constant signal value and the size $|R_j \setminus S_j|$ is upper bounded by $\kappa_j^{-2}\sigma ^2   k_{\mathrm{dyad}}(\theta^*_{R_j}) \log(N)$, where $\kappa_j$ is the smallest jump size within~$R_j$.   We note that since $k_{\mathrm{dyad}}(\theta^*_{R_j}) \leq k_{\mathrm{dyad}}(\theta^*)$, if $k_{\mathrm{dyad}}(\theta^*)$ is assumed to be a constant as in the cluster detection literature \citep[e.g.][]{arias-castro2011}, then for general $d \in \mathbb{N}^*$, \eqref{eq-thm1-2} has the same estimation error rate as that in the change point detection literature \citep[e.g.][]{wang2020univariate, verzelen2020optimal}. 

\begin{figure}
	\begin{center}
		\includegraphics[width = 0.5\textwidth, height = 0.15\textheight]{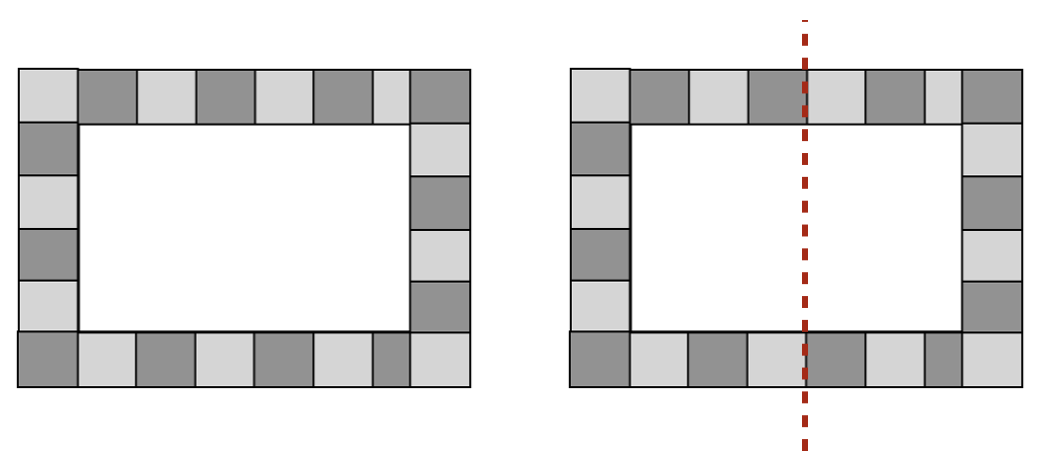}	
		\caption{The left panel is the true signal.  The first dyadic split always cuts a rectangle into two and ends up with over-partitioning, with the right panel as an example. }\label{fig-4}
	\end{center}
\end{figure}

The result in \eqref{eq-thm1-2} provides an individual recovery error, with the individual jump size $\kappa_j$, while paying the price  $k_{\mathrm{dyad}}(\theta^*_{R_j})$.  In \eqref{eq-thm1-1} we show that globally, when we add up the errors in all resulting rectangles, the overall recovery error is of order $\kappa^{-2} \sigma^2  k_{\mathrm{dyad}}(\theta^*)\log(N)$.  When $d = 1$, \cite{verzelen2020optimal} shows the minimax rate of the $L_1$-Wasserstein distance between the vectors of true change points and of the change point estimators is of order $\kappa^{-2} \sigma^2 K$, where $K$ is the true number of change points.  Comparing with this result, \eqref{eq-thm1-1} can be seen as delivering a ``one-sided'' nearly optimal rate, saving for a logarithmic factor.

In the change point localization case, i.e.~when $d = 1$, one can show that $k(\theta^*_{R_j}) \leq 3$ \citep[see, e.g.][]{wang2020univariate} just  assuming mild conditions on $\Delta$. However, as soon as $d \geq 2$ this is no longer the case. As an illustration, consider the left plot of \Cref{fig-4}, where the whole rectangle is $R_j$ and the white one in the middle is $S_j$. Without further constraints on each component, having only conditions on $\Delta$ will not prevent a very fragmented boundary, which can increase the term $k_{\mathrm{dyad}}(\theta^*_{R_j})$ in~\eqref{eq-thm1-2}.

\subsection{Two-sided consistency: A two-step estimator}\label{sec-two-sided}

As we have seen in \Cref{sec-one-sided}, despite having the penalty term $\lambda|\Pi|$ to penalize over-partitioning in the objective function \eqref{eqn:cart}, since the optimization of DCART only restricts to all dyadic partitions, the naive DCART estimator still suffers from over-partitioning.  To address this issue, we propose a two-step estimator which builds upon DCART and merges the corresponding rectangles if they are close enough and if their estimated means are similar. The procedure will not only be guaranteed to return, with high probability, the correct number of rectangles in the partition induced by the signal $\theta^*$, but also fulfill the target for (two-sided) consistency specified in \eqref{eq-est-goal}.

\textbf{The two-step estimator}.  Our two-estimator starts with a constrained DCART, prohibiting splits resulting in  rectangles that are too small.  \emph{The first step} estimation is defined as
\begin{equation} \label{eqn:cart2}
	\widehat{\theta} =  O_{S(\widehat{\Pi})}(y), \quad \mbox{with}\quad \widehat{\Pi}  \,\in \,\underset{  \Pi    \in \mathcal{P}_{\mathrm{dyadic},d,n}(\eta)  }{\arg \min}\left\{2^{-1} \|   y-  O_{S(\Pi)}(y) \|^2   +  \lambda_1  \vert \Pi\vert    \right\},
\end{equation}
where  $\lambda_1, \eta > 0$ are tuning parameters and $\mathcal{P}_{\mathrm{dyadic}, d, n}(\eta)$ is the set of rectangular partitions where every  rectangle is of size  at least  $\eta$. 

\emph{The second step} merges rectangles in the partition associated with the estimator from the first step to overcome over-partitioning.  To be specific, let $\{R_l\}_{l \in [k(\widehat{\theta})]}$ be a rectangular partition of $L_{d, n}$ associated by $\widehat{\theta}$.  For each $(i, j) \in [k(\widehat{\theta})] \times [k(\widehat{\theta})]$, $i < j$, let $z_{(i, j)} = 1$ if 
\begin{equation}\label{eq-2-step-cri-1}
	\mathrm{dist}(R_i,R_j)\,\leq\,  \eta
\end{equation}
and
\begin{equation} \label{eqn:est_cond2}
	\frac{1}{2}\left[    \sum_{l\in  R_i}   (Y_l -    \bar{Y}_{ R_i } )^2  +    \sum_{l\in  R_j}   (Y_l -    \bar{Y}_{ R_j } )^2     \right]     +  \lambda_2  \,>\,  \frac{1}{2}  \sum_{l\in R_i\cup R_j  }   (Y_l -    \bar{Y}_{ R_i\cup R_j  } )^2,
\end{equation}
where $\lambda_2 > 0$ is a tuning parameter; otherwise, we let $z_{(i, j)} = 0$.  With this notation, let $E = \{(i, j) \in [k(\widehat{\theta})] \times [k(\widehat{\theta})]:\, z_{(i,j)} =1 \}$ and  let $\{\widehat{\mathcal{C}}_l\}_{l \in [L]}$ be the collection of all the connected components of the undirected graph $([k(\widehat{\theta})], E)$.  \emph{The final output} can be written as 	
\begin{equation}\label{eq-hat-Lambda}
	\widehat{\Lambda}\, =\, \left\{  \cup_{j \in \widehat{\mathcal{C}}_1} R_j,\ldots,\cup_{j \in \widehat{\mathcal{C}}_L} R_j \right\}.
\end{equation}
Notice that the main computational burden is to find the DCART estimator which has a cost of $O(N)$. From the output of DCART, the computation of the quantities in (\ref{eq-2-step-cri-1})  and (\ref{eqn:est_cond2}) can be done in $O(k(\hat{\theta})^2 )$.
Before describing the favorable properties of the estimator, we state our main assumption.

\begin{assumption} \label{as5}
	If  $A,B \in  \Lambda^*$ with  $A \neq B$   and $\bar{\theta}^*_A = \bar{\theta}_B^*$, then 
	we have that 
	\begin{equation}\label{eq-as5-1}
		\mathrm{dist}(A,B) \, \geq  \, c k_{\mathrm{dyad}}(\theta^*) \kappa^{-2} \sigma^2\log (N),
	\end{equation}
	for some large enough constant $c>0$.   Furthermore, we assume that 
	\begin{equation} \label{eqn:snr}
		\kappa^2 \Delta \geq  ck_{\mathrm{dyad}}(\theta^*)\sigma^2 \log (N).
	\end{equation}
\end{assumption}

\Cref{as5} simply states that if two elements of $\Lambda^*$  have the same signal values then they should be sufficiently apart from each other. \Cref{as5} also specifies a signal-to-noise ratio type of condition.  When $d = 1$, it is well known in the change point literature  \citep[e.g.][]{wang2020univariate} that the optimal signal-to-noise ratio for localization is of the form $\kappa^2 \Delta \gtrsim \sigma^2 \log (N)$. The condition in \eqref{eqn:snr} has an additional $k_{\mathrm{dyad}}(\theta^*)$ factor.  It is an interesting open problem to determine whether this additional term can be avoided when $d \geq 2$.

\begin{theorem}\label{thm3}
	Assume that \Cref{as5} holds.  Suppose that the data satisfy \eqref{eq-model} and $\widehat{\Lambda}$ is the two-step estimator, with tuning parameters $\lambda_1 = C_1 \sigma^2\log(N)$, $\lambda_2 = C_2 k_{\mathrm{dyad}}(\theta^*) \sigma^2 \log(N)$ and 
	\begin{equation} \label{eqn:snr4}
		c_1   k_{\mathrm{dyad}}(\theta^*)  \sigma^2 \kappa^{-2}\log(N) \leq  \eta \leq 	\Delta/c_2, 
	\end{equation}
	where $c_1, c_2, C_1, C_2 > 0$ are absolute constants.  Then with probability at least $1 - N^{-c}$, it holds that 
	\begin{equation}
		\label{eqn:upper_3}
		\vert  \widehat{\Lambda}\vert  \,=\,  \vert \Lambda^*\vert \quad \mbox{and} \quad d_{\mathrm{Haus}}(\widehat{\Lambda}, \Lambda^*) \,\leq\, C  \sigma^2 \kappa^{-2} k_{\mathrm{dyad}}(\theta^*)\log (N),
	\end{equation}
	where $c, C > 0$ are absolute constants.
\end{theorem}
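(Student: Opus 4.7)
The plan is to first establish an analog of \Cref{thm1} for the constrained DCART estimator $\widehat{\theta}$ of \eqref{eqn:cart2}, and then to analyze when the pruning criteria \eqref{eq-2-step-cri-1}--\eqref{eqn:est_cond2} correctly merge versus correctly separate pairs of rectangles. Writing $\{R_l\}_{l \in [k(\widehat{\theta})]}$ for the rectangles produced by the first step, the key observation is that $\eta \leq \Delta/c_2$ in \eqref{eqn:snr4} guarantees any sufficiently refined dyadic rectangular partition associated with $\theta^*$ remains feasible for \eqref{eqn:cart2}. Rerunning the oracle-type argument underlying \Cref{thm1} with this minor modification yields, on an event of probability at least $1-N^{-c}$, the global bound $\sum_l |R_l \setminus S_l| \lesssim \kappa^{-2}\sigma^2 k_{\mathrm{dyad}}(\theta^*)\log(N)$, the local bound $|R_l \setminus S_l| \lesssim \kappa_l^{-2}\sigma^2 k_{\mathrm{dyad}}(\theta^*_{R_l})\log(N)$ on every bad rectangle, and $k(\widehat{\theta}) \lesssim k_{\mathrm{dyad}}(\theta^*)$.

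I would then label each $R_l$ by the element $A(l) \in \Lambda^*$ containing its largest-constancy subset $S_l$. This labelling is well defined: since $|R_l| \geq \eta$ by construction, and since the local bound together with the lower bound on $\eta$ in \eqref{eqn:snr4} force $|R_l \setminus S_l| < |R_l|/2$, a strict majority of $R_l$ lies in a single element of $\Lambda^*$. The proof that no false merge occurs then splits into two cases. If $A(i) \neq A(j)$ with $\bar{\theta}^*_{A(i)} = \bar{\theta}^*_{A(j)}$, the separation part of \Cref{as5}, applied after accounting for the small extent of $R_i \setminus S_i$ and $R_j \setminus S_j$, forces $\mathrm{dist}(R_i, R_j) > \eta$ once $c$ in \eqref{eq-as5-1} is taken sufficiently larger than $c_1$ in \eqref{eqn:snr4}, so \eqref{eq-2-step-cri-1} fails. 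If instead $|\bar{\theta}^*_{A(i)} - \bar{\theta}^*_{A(j)}| \geq \kappa$, the between-within ANOVA decomposition identifies the gap between the two sides of \eqref{eqn:est_cond2} with $\tfrac{1}{2}\cdot\tfrac{|R_i||R_j|}{|R_i|+|R_j|}(\bar{Y}_{R_i}-\bar{Y}_{R_j})^2 - \lambda_2$, and the signal part $\tfrac{|R_i||R_j|}{|R_i|+|R_j|}(\bar{\theta}^*_{R_i}-\bar{\theta}^*_{R_j})^2$ is at least $\tfrac{1}{4}\eta\kappa^2 \geq \tfrac{1}{4} c_1 k_{\mathrm{dyad}}(\theta^*)\sigma^2\log(N)$, which exceeds both $\lambda_2$ and the $O(\sigma^2 \log N)$ Gaussian fluctuations once $c_1/C_2$ is chosen large enough; again \eqref{eqn:est_cond2} fails.

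Conversely, for $A(i)=A(j)$ and $\mathrm{dist}(R_i,R_j) \leq \eta$, the signal is essentially constant on $R_i \cup R_j$, so $\tfrac{|R_i||R_j|}{|R_i|+|R_j|}(\bar{Y}_{R_i}-\bar{Y}_{R_j})^2$ concentrates at $O(\sigma^2\log N)$ via a sub-Gaussian deviation bound applied uniformly over the $O(k_{\mathrm{dyad}}(\theta^*)^2)$ candidate pairs, hence stays below $2\lambda_2$ and the merge fires. A short geometric argument using $\eta \leq \Delta/c_2$ shows that the rectangles labelled by a given $A \in \Lambda^*$ can be chained by $\eta$-close neighbours inside $A$, so they collapse into a single connected component of $([k(\widehat{\theta})], E)$. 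Together with the non-merge step this yields $|\widehat{\Lambda}| = |\Lambda^*|$ and a natural matching of $\widehat{\Lambda}$ to $\Lambda^*$ under which each symmetric difference $\widehat{\Lambda}_s \triangle A_s$ is contained in the union of the bad parts $R_l \setminus S_l$ over labels $A(l) = A_s$; the bound \eqref{eqn:upper_3} then follows from the global one-sided estimate.

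The main obstacle I anticipate is the non-merge analysis for pairs of rectangles that straddle a true boundary with non-negligible bad overlap: controlling the Gaussian fluctuations of $(\bar{Y}_{R_i}-\bar{Y}_{R_j})^2$ uniformly over all $O(k_{\mathrm{dyad}}(\theta^*)^2)$ pairs while still extracting a between-group signal of order $\eta \kappa^2$ is precisely what forces $\lambda_2$ to carry the extra $k_{\mathrm{dyad}}(\theta^*)$ factor absent from $\lambda_1$. Tracking the interplay of the constants $c, c_1, c_2, C_1, C_2$ in \Cref{as5} and \eqref{eqn:snr4} so that all of the threshold inequalities in these case distinctions hold simultaneously is where the bookkeeping becomes most delicate.
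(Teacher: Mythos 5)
Your plan follows essentially the same route as the paper, which proves \Cref{thm3} by rerunning the argument for the unconstrained two-step estimator (\Cref{thm2} in the supplement) and observing that the size constraint $|R_l|\geq\eta$ removes the only term requiring the stronger \Cref{as4}: a one-sided consistency statement for the first step, a labelling of each $R_l$ by the element of $\Lambda^*$ containing $S_l$, a two-case no-false-merge analysis (equal means but far apart via \eqref{eq-as5-1}; different means via the ANOVA identity of \Cref{lem1}), a chaining argument for connectivity of same-label rectangles, and the final accounting of $\widehat{A}\triangle A$ by the bad parts $R_l\setminus S_l$. The architecture is correct and, if anything, you are more explicit than the paper about the equal-mean distinct-element case in Step 4.

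One step is misdiagnosed, though the conclusion survives. You attribute the extra $k_{\mathrm{dyad}}(\theta^*)$ factor in $\lambda_2$ to a union bound over the $O(k_{\mathrm{dyad}}(\theta^*)^2)$ candidate pairs and claim that for same-label pairs the statistic $\frac{|R_i||R_j|}{|R_i|+|R_j|}(\bar{Y}_{R_i}-\bar{Y}_{R_j})^2$ concentrates at $O(\sigma^2\log N)$. The union bound over pairs only costs an additive $\log(k_{\mathrm{dyad}}^2)\lesssim\log N$, and the statistic is in general only $O(k_{\mathrm{dyad}}(\theta^*)\sigma^2\log N)$, not $O(\sigma^2\log N)$: even when $A(i)=A(j)$, the population means $\bar{\theta}^*_{R_i}$ and $\bar{\theta}^*_{R_j}$ are biased away from $\bar{\theta}^*_{A}$ by the contaminated parts $R_i\setminus S_i$ and $R_j\setminus S_j$, and bounding the resulting between-group bias requires the first-step oracle inequality localized to each rectangle (the paper's Lemmas \ref{lem7} and \ref{lem10}, which deliver $\sum_{l\in S_j}(\theta^*_l-\bar{\theta}^*_{R_j})^2\lesssim k_{\mathrm{dyad}}(\theta^*)\sigma^2\log N$). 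This bias, not the multiplicity of pairs, is why $\lambda_2$ must carry the $k_{\mathrm{dyad}}(\theta^*)$ factor; since your $\lambda_2$ already does, the merge/no-merge thresholds still separate, but as written the concentration claim for same-label pairs would fail and needs to be replaced by this bias-plus-noise decomposition.
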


\Cref{thm3} shows that the two-step estimator overcomes the over-partitioning issue of DCART  and  is consistent for the partition recovery problem provided that $\kappa^{-2}\sigma^2  k_{\mathrm{dyad}}(\theta^*)\log (N)/\Delta \rightarrow 0$. The resulting  estimation error is of order at most $\kappa^{-2}\sigma^2  k_{\mathrm{dyad}}(\theta^*)\log (N)$.  

In view of \Cref{as5} and \Cref{thm3}, intuitively speaking, \eqref{eqn:snr4} ensures that if there are two separated regions where the true signal takes on the same value, then the they should be far apart; otherwise, our algorithm would not be able to tell if they should be merged together or keep separated. Eq.~\eqref{eqn:upper_3} requires that the signal strength is large enough. Technically speaking, if \eqref{eqn:snr4} is changed to another quantity, denoted by $w$, then the final result of \Cref{thm3} would be  
\[
d_{\mathrm{Haus}}(\widehat{\Lambda}, \Lambda^*) \,\leq\, C \left\{(w \wedge \Delta) \vee \sigma^2 \kappa^{-2} k_{\mathrm{dyad}}(\theta^*)\log (N)\right\},
\]
where the term $w \wedge \Delta$ is due to the definition of $\Delta$.  This means that the final rate in \Cref{thm3} is determined jointly by \Cref{as5} and an optimal rate.

There are three tuning parameters required by the two-step estimator.  Practical guidance on how to pick them will be provided in \Cref{sec:tuning_parameters}.  The tuning parameter $\lambda_1$ in \eqref{eqn:cart2} is the same as the one in \eqref{eqn:cart} and their theoretical rates are determined by the maximal noise level of a Gaussian distribution  over all possible rectangles using a union bound argument.  The tuning parameter $\lambda_2$ is used in the merging step \eqref{eqn:est_cond2}, penalizing over-partitioning.  Since the candidate rectangles in \eqref{eqn:est_cond2} are the estimators from the first step, these rectangles carry the estimation error from the first step.  An intermediate result from the proof of \Cref{thm3} unveils a similar result to \Cref{thm1}, that for each $R_j$ involved in \eqref{eqn:est_cond2}, there exists a subset $S_j \subset R_j$ having constant signal values satisfying that 
\[
\sum_{j \in [k(\widehat{\theta})]} \vert  R_j \backslash S_j \vert    \lesssim \kappa^{-2} \sigma^2  k_{\mathrm{dyad}}(\theta^*)\log (N).
\]
This suggests that the right choice for $\lambda_2$ should be able to counter this extra $k_{\mathrm{dyad}}(\theta^*)$ factor.  
Finally, the tuning parameter $\eta$ appears twice in the estimation procedure: as a lower bound on the size of the rectangles obtained in first step as  \eqref{eqn:cart2} and as an upper bound on the distance between two rectangles in \eqref{eq-2-step-cri-1}; see \eqref{eqn:snr4}.  The value of $\eta$ should be at least as large as the one-sided upper bound on the recovery error, in order to ensure that over-partitioning cannot occur. As the same time, it should not be chosen  too large, or otherwise the procedure may erroneously prune small true rectangles. By this logic, $\eta$ should not exceed the   minimal size of the true rectangles; this is indeed the  upper bound in condition \eqref{eqn:snr4}. Finally, we would like to point out that, similar conditions are even necessary in some change point localization ($d = 1$) procedures, see  \citep[e.g.][]{wang2020univariate}.

In practice, one may be tempted to abandon the tuning parameter $\eta$, and only prune the DCART output using \eqref{eqn:est_cond2}.  
If one still wants the result to satisfy \eqref{eq-est-goal}, then stronger conditions are needed and worse localization rates are obtained.  We include this result in Section~\ref{sec:two_step} in the supplementary material.  

\subsection{Optimality in the regular boundary case}\label{sec-optimal}

We use a two-step procedure to improve the partition recovery performances of DCART and show the error is of order $\kappa^{-2}\sigma^2 k_{\mathrm{dyad}}(\theta^*) \log(N)$.  A natural question in order is whether one can further expect to improve this rate.  In Proposition~\ref{prop-lb}, we show a minimax lower bound result.  

\begin{proposition}\label{prop-lb}
	Let $\{y_i\}_{i \in L_{d, n}}$ satisfy (\ref{eq-model}) and 
	\begin{equation}\label{eq-lb-S}
		\theta^*_i = \kappa, \quad \mbox{if } i \in S; \quad \theta^*_i = 0, \quad \mbox{if } i \in L_{d, n}\setminus S,
	\end{equation}
	where $S \subset L_{d, n}$ is a rectangle and $|S| = \Delta > 0$.  Let $P^N_{\kappa, \Delta, \sigma}$ denote the corresponding joint distribution.  Consider the class of distributions
	\[
	\mathcal{P}_N = \left\{P^N_{\kappa, \Delta, \sigma}: \, \Delta < N/2, \,    \kappa^2 \Delta /\sigma^2 \geq  \log(N)/6 \right\}. 
	\]	
	Then for $N \geq 2^6$, it holds that $\inf_{\widehat{S}} \sup_{P \in \mathcal{P}_N} \mathbb{E}_P \left\{|\widehat{S} \triangle S|\right\} \geq \sigma^2\kappa^{-2} \log(N)/36$,
	where the infimum is over all estimators $\widehat{S}$ of $S$.
\end{proposition}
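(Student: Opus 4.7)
The plan is a Fano-type multi-hypothesis lower bound using a packing of disjoint candidate rectangles placed right at the detection threshold. Fix $\kappa$ and $\sigma$, and let $\Delta := \lceil \sigma^{2}\log(N)/(6\kappa^{2}) \rceil$, the smallest rectangle volume allowed by $\mathcal{P}_{N}$, so that the constraint $\kappa^{2}\Delta/\sigma^{2} \geq \log(N)/6$ is essentially tight; the non-emptiness of $\mathcal{P}_{N}$ forces $\Delta<N/2$. Tile $L_{d,n}$ with $M = \lfloor N/\Delta \rfloor$ pairwise disjoint rectangles $S_{1},\ldots,S_{M}$ of volume $\Delta$ each---for instance by stacking consecutive strips along the first coordinate axis---and denote by $P_{j}\in\mathcal{P}_{N}$ the joint law of $y$ when $S=S_{j}$.

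\textbf{Fano step and reduction.} Given any estimator $\hat{S}$, I would define the induced nearest-rectangle classifier $\tilde{j} := \argmin_{k\in[M]} |\hat{S}\triangle S_{k}|$. By disjointness and the Gaussian KL formula, $\mathrm{KL}(P_{i}, P_{j}) = \kappa^{2}|S_{i}\triangle S_{j}|/(2\sigma^{2}) = \kappa^{2}\Delta/\sigma^{2}$ for every $i \neq j$, so Fano's inequality in its pairwise form yields
\[
\frac{1}{M}\sum_{j=1}^{M} P_{j}(\tilde{j}\neq j)\ \geq\ 1 - \frac{\kappa^{2}\Delta/\sigma^{2}+\log 2}{\log M}\ \geq\ 1 - \frac{\log(N)/6+\log 2}{\log M}.
\]
Whenever $\tilde{j}\neq j$, the triangle inequality for the symmetric-difference metric combined with the minimality of $\tilde{j}$ force $2|\hat{S}\triangle S_{j}| \geq |\hat{S}\triangle S_{j}| + |\hat{S}\triangle S_{\tilde{j}}| \geq |S_{j}\triangle S_{\tilde{j}}| = 2\Delta$, so $|\hat{S}\triangle S_{j}|\geq\Delta$ on this event.

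\textbf{Conclusion and main obstacle.} Combining the two estimates, passing to a maximum over $j$, and using $\sup_{P\in\mathcal{P}_{N}}\mathbb{E}_{P}|\hat{S}\triangle S| \geq \max_{j} \mathbb{E}_{P_{j}}|\hat{S}\triangle S_{j}|$, one gets
\[
\sup_{P\in\mathcal{P}_{N}}\mathbb{E}_{P}|\hat{S}\triangle S|\ \geq\ \Delta\left(1 - \frac{\log(N)/6+\log 2}{\log M}\right).
\]
The target $\sigma^{2}\log(N)/(36\kappa^{2}) = \Delta/6$ then follows as soon as the Fano factor is at least $1/6$, which reduces to $\log M \geq (6/5)(\log(N)/6+\log 2)$; the hypothesis $N \geq 2^{6}$, i.e.\ $\log N\geq 6\log 2$, is exactly what absorbs the additive $\log 2$ into a small fraction of $\log(N)$ and yields the precise constant $1/36$. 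The chief technical hurdle will be the packing step itself: verifying that $M = \lfloor N/\Delta\rfloor$ satisfies $\log M \gtrsim \log(N)$. This is automatic in the ``moderate SNR'' regime $\Delta\lesssim N^{1-\varepsilon}$; in the residual low-SNR regime where $\Delta$ is a constant fraction of $N$, the target itself is of order $\Delta$, and one falls back on a two-point Le Cam or $\chi^{2}$ argument between well-separated rectangles to recover a bound of the correct order.
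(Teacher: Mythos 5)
Your argument is essentially the paper's: a Fano bound over a family of $\asymp N/\Delta$ translated rectangles of the threshold volume $\Delta \asymp \sigma^2\kappa^{-2}\log(N)$, with pairwise KL equal to $\kappa^2\Delta/\sigma^2 = \log(N)/6$ and pairwise separation $\asymp \Delta$; the paper invokes Lemma~3 of Yu (1997) on an overlapping grid of cubes shifted by $q=\Delta^{1/d}/2$ where you use disjoint tiles and spell out the nearest-rectangle reduction, but the counting, the KL bound, and the constant-chasing via $N\geq 2^6$ are the same. The one caveat is your flagged fallback for the regime where $\Delta$ is a constant fraction of $N$: a two-point Le Cam argument there requires KL of order one and hence separation only $O(\sigma^2\kappa^{-2})$, which cannot recover the $\log(N)$ factor; however, the paper does not treat that regime either (it silently restricts to the sub-class with $\Delta^{2d}\leq N$), so your main argument covers exactly what the paper's proof covers.
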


\Cref{prop-lb} shows that when the induced partition of $\theta^*$ consists of one  rectangle and its complement, i.e.~when $k_{\mathrm{dyad}}(\theta^*) = O(1)$, the minimax lower bound on the estimation error is of order $\kappa^{-2} \sigma^2 \log(N)$.  Recalling the estimation errors we derived for DCART and the two-step estimator in Theorems~\ref{thm1} and~\ref{thm3}, when $k_{\mathrm{dyad}}(\theta^*) = O(1)$, the results thereof are minimax optimal.

The assumption $k_{\mathrm{dyad}}(\theta^*) = O(1)$ is fairly restrictive, though, using our notation, the case of $|\Lambda^*| =2$ is in fact used in the cluster detection literature \citep{arias-castro2011, arias2008searching}.  In fact, in order to achieve the optimal estimation rate indicated in \Cref{prop-lb}, we only need a boundary regularity condition, in the sense that for every rectangle in the partition induced by $\theta^*$, there are only $O(1)$-many other rectangles nearby.  This condition is formally stated in \Cref{as6}.  

\begin{assumption} \label{as6}
	There exists constant  $C,c >0$  such that  for any   $A \in \Lambda^*$ it holds that 
	\[
	\left\vert \left\{  B  \in \Lambda^* \backslash\{A\} \, :\,\,\,\,\,    \mathrm{dist}(A,B) \leq  c \sigma^2\kappa^{-2} k_{\mathrm{dyad}}(\theta^*) \log (N)    \,\,\right\} \right\vert\,\leq \, C.
	\] 
\end{assumption}

\Cref{as6} asserts that within $c\kappa^{-2}\sigma^2 k_{\mathrm{dyad}}(\theta^*) \log(N)$ distance, each element of $\Lambda^*$ only has $O(1)$-many elements nearby.  This condition shares the same spirit of requiring the cluster boundary to be a bi-Lipschitz function in the cluster detection literature \citep[e.g.][]{arias2011detection}.

\begin{corollary} \label{cor1}
	Assume that Assumptions~\ref{as5} and \ref{as6} hold.  Suppose that the data satisfy \eqref{eq-model} and $\widehat{\Lambda}$ is the two-step estimator defined in \eqref{eq-hat-Lambda}, with tuning parameters $\lambda_1 = C_1 \sigma^2\log(N)$, $\lambda_2 = C_2 k_{\mathrm{dyad}}(\theta^*) \sigma^2 \log(N)$ and $c_1  \kappa^{-2}k_{\mathrm{dyad}}(\theta^*)  \sigma^2 \log(N) \leq  \eta \leq 	 \Delta/c_2$, 
	where $c_1, c_2, C_1, C_2 > 0$ are absolute constants.  Then with probability at least $1 - N^{-c}$, it holds that $\vert  \widehat{\Lambda}\vert  \,=\,  \vert \Lambda^*\vert$ and 
	\begin{equation}\label{eq-cor1-1}
		\max_{A \in \Lambda^*} \min_{\widehat{A} \in \widehat{\Lambda}} |\widehat{A} \triangle A^*| \leq \frac{C \sigma^2 \log(N)}{\kappa^2}  \min\left\{  k(\theta^*), \, \underset{ B\in \Lambda^*,\,\,\mathrm{dist}(A,B)\leq c \sigma^2 \kappa^{-2} k_{\mathrm{dyad}}(\theta^*) \log (N)}{\max}\vert B\vert/\eta \right \}.
	\end{equation}
	where $c, C > 0$ are absolute constants. 
	In particular,  if $\eta \asymp \Delta$ and $\vert A \vert  \asymp \Delta$, for all $A \in \Lambda^*$, then
	\begin{equation}\label{eq-cor1-2}
		d_{\mathrm{Haus}}(\widehat{\Lambda}, \Lambda^*) \,\leq \,  C  \, \sigma^2 \kappa^{-2} \log (N).
	\end{equation}
\end{corollary}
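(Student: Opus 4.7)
The plan is to bootstrap on Theorem~\ref{thm3} and sharpen the error analysis using the local sparsity afforded by Assumption~\ref{as6}. Since Assumption~\ref{as5} is in force and the tuning-parameter regime matches, Theorem~\ref{thm3} already yields, with the claimed probability, $|\widehat{\Lambda}| = |\Lambda^*|$ together with a natural matching $A \leftrightarrow \widehat{A}$ satisfying $|A \triangle \widehat{A}| \lesssim \sigma^2 \kappa^{-2} k_{\mathrm{dyad}}(\theta^*) \log(N)$. This already supplies the first branch of the minimum in \eqref{eq-cor1-1}. The substantive new content is the second, potentially much smaller, $\max_B |B|/\eta$ branch, which is where Assumption~\ref{as6} enters and from which the sharpened conclusion \eqref{eq-cor1-2} follows.

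To derive the second branch, fix $A \in \Lambda^*$ and let $\{R_j\}$ denote the first-step DCART rectangles produced by \eqref{eqn:cart2}. The symmetric difference $A \triangle \widehat{A}$ is contained in the union of those $R_j$ that straddle the topological boundary of $A$; call this index set $\mathcal{J}(A)$. Re-using the one-sided consistency argument behind Theorem~\ref{thm1} (which also underlies the proof of Theorem~\ref{thm3}), each such rectangle admits a subset $S_j \subseteq R_j$ on which $\theta^*$ is constant and
\[
|R_j \setminus S_j| \lesssim \kappa_j^{-2} \sigma^2 k_{\mathrm{dyad}}(\theta^*_{R_j}) \log(N).
\]

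Assumption~\ref{as6} now enters in two distinct ways. First, any $R_j \in \mathcal{J}(A)$ simultaneously meets $A$ and some other component $B \in \Lambda^*$, so $\mathrm{dist}(A,B)$ is at most $\operatorname{diam}(R_j)$, which is itself controlled by the first-step penalty together with the merging rule \eqref{eq-2-step-cri-1}. Thus every such $B$ falls in the $O(1)$-sized neighbourhood of $A$ guaranteed by Assumption~\ref{as6}, forcing $k_{\mathrm{dyad}}(\theta^*_{R_j}) = O(1)$ and sharpening the per-rectangle estimate to $|R_j \setminus S_j| \lesssim \kappa^{-2} \sigma^2 \log(N)$. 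Second, because every $R_j$ has volume at least $\eta$ by the constraint defining $\mathcal{P}_{\mathrm{dyadic}, d, n}(\eta)$ and every $R_j \in \mathcal{J}(A)$ lies in the union of $A$ with its $O(1)$ neighbours, pigeonholing gives $|\mathcal{J}(A)| \leq \eta^{-1}\bigl(|A| + \sum_{B \text{ neighbour of } A} |B|\bigr) \lesssim \max_B |B|/\eta$. Combining,
\[
|A \triangle \widehat{A}| \leq \sum_{j \in \mathcal{J}(A)} |R_j \setminus S_j| \lesssim \kappa^{-2} \sigma^2 \log(N) \cdot \max_B |B|/\eta,
\]
and taking the maximum over $A$ yields \eqref{eq-cor1-1}; the conclusion \eqref{eq-cor1-2} is then immediate upon substituting $\eta \asymp \Delta \asymp |A|$ for every component.

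The main obstacle I anticipate is the rigorous diameter control on the straddling rectangles: one must verify that every $R_j \in \mathcal{J}(A)$ is close enough to $A$ for Assumption~\ref{as6} to supply its $O(1)$-neighbour cap. This should reduce to inspecting the pruning step \eqref{eqn:est_cond2} and observing that any DCART rectangle left straddling a true boundary after pruning must already have mis-assigned mass of the optimal one-sided order, essentially localizing to a single true component the global accounting performed in the proof of Theorem~\ref{thm3}.
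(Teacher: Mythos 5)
Your overall architecture coincides with the paper's: start from the one-sided per-rectangle bound $|R_j\setminus S_j|\lesssim \kappa^{-2}\sigma^2 k_{\mathrm{dyad}}(\theta^*_{R_j})\log N$, invoke Assumption~\ref{as6} to cap at $O(1)$ the number of components of $\Lambda^*$ that a boundary-straddling rectangle can meet (thereby removing the $k_{\mathrm{dyad}}$ factor from the per-rectangle error), and then count the straddling rectangles through the volume floor $|R_j|\ge\eta$ to obtain the $\max_B|B|/\eta$ factor; the first branch of the minimum and $|\widehat\Lambda|=|\Lambda^*|$ come from Theorem~\ref{thm3} exactly as you say. The paper treats the two inclusions $\hat A\setminus A$ and $A\setminus\hat A$ separately rather than through a single index set $\mathcal J(A)$, but that difference is cosmetic.

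The one step you justify incorrectly is the claim that every component $B$ met by a straddling rectangle $R_j$ lies within the Assumption~\ref{as6} radius of $A$ because $\mathrm{dist}(A,B)\le\operatorname{diam}(R_j)$ and the diameter is ``controlled by the first-step penalty together with the merging rule.'' Neither the penalty nor the pruning step bounds the diameter of a first-step rectangle: a rectangle in $\widehat{\Pi}$ can be arbitrarily large (for instance, lying almost entirely inside $A$ with a thin sliver of $B$), and the only quantity that is small is the cardinality $|R_j\setminus S_j|$, not the geometric extent of $R_j$. The paper's route is different and is the one you need: since $R_j\cap B\subseteq R_j\setminus S_j$ while $R_j\cap A=S_j$, the line-segment argument of Step~1.1 in the proof of Theorem~\ref{thm2} converts the cardinality bound $|R_j\setminus S_j|\lesssim \kappa^{-2}\sigma^2 k_{\mathrm{dyad}}(\theta^*)\log N$ into the distance bound $\mathrm{dist}(A,B)\lesssim \kappa^{-2}\sigma^2 k_{\mathrm{dyad}}(\theta^*)\log N$, which is precisely the radius appearing in Assumption~\ref{as6}. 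You do flag this as the anticipated obstacle and gesture at the correct repair (``mis-assigned mass of the optimal one-sided order''), but note that the repair has nothing to do with the pruning criterion \eqref{eqn:est_cond2}: it is a purely first-step, one-sided-consistency fact. With that substitution, your argument matches the paper's proof.
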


Corollary~\ref{cor1} shows that even if $k_{\mathrm{dyad}}(\theta^*)$ is diverging as the sample size grows unbounded, one can still achieve the minimax optimal estimation error rate $\kappa^{-2}\sigma^2 \log(N)$, with properly chosen tuning parameters and additional regularity conditions on the partition.  An interesting by-product in deriving this rate is \eqref{eq-cor1-1}, which characterizes the effect of the number of nearby rectangles in the estimation error for individual elements in $\Lambda^*$.

\section{Experiments}\label{sec-numerical}

In this section, we demonstrate in simulation the numerical performances of the two-step estimator for the task of partition recovery.  The code is by courtesy of the authors of \cite{chatterjee2019adaptive} and all of our experiments are done in a 2.3 GHz 8-Core Intel Core i9 machine. Our code can be found in \url{https://github.com/hernanmp/Partition_recovery}. We focus on the naive two-step estimator detailed in \Cref{sec:two_step} and denoted here as $\widehat{\Lambda}$.  The implementation details regarding choice of tuning parameters are discussed in \ref{sec:tuning_parameters}.  

\begin{table}[t!]
	\centering
	\caption{\label{tab1} Performance  evaluations   over 50 repetitions under different scenarios. The performance metrics $\text{dist}_1$ and $\text{dist}_2$ are defined in the text.   The numbers in parenthesis denote standard errors. }
	\medskip
	\setlength{\tabcolsep}{3.4pt}
	\begin{scriptsize}
		\begin{tabular}{|rr|rr|rr|rr|rr|rr|} 
			\hline
			\multicolumn{2}{|c|}{Setting} & 	\multicolumn{2}{c|}{$\text{dist}_1$}    & 	\multicolumn{2}{c|}{$\text{dist}_2$}&     \multicolumn{2}{c|}{Setting}	&\multicolumn{2}{c|}{$\text{dist}_1$} &    \multicolumn{2}{c|}{$\text{dist}_2$} \\
			&  $\sigma$            &  $\widehat{\Lambda}$                      &TV-based          &$\widehat{\Lambda}$   &     TV-based        &                &  $\sigma$     &     $\widehat{\Lambda}$           &TV-based          & $\widehat{\Lambda}$         &   TV-based \\ 
			\hline	   
			$1$            &       $0.5$ & \textbf{ 35.8(12.2)}      & 51.6(21.9)         &\textbf{0.0(0.0)}             & 0.1(0.3)             &          2                       &      $0.5$   & 462.6(637.7)  &    \textbf{418.6(246.9)}                      &\textbf{0.2(0.4)}  &   0.4(0.6)\\            
			$1$            &       $1.0$ &\textbf{196.1(401.8)} &   582.3(2429.5)  & \textbf{0.0(0.2)}            &  0.3(0.5)          &          2                       &      $1.0$   &  \textbf{2617.7(4047.7)}&         8630.6(6049.3)            & \textbf{0.7(0.6)}  &  1.4(0.7)   \\     
			$1$            &       $1.5$ &\textbf{298.0(878.7)} &4513.6(5970.9)     & \textbf{0.1(0.3)}           & 0.5(0.5)        &          2                       &      $1.5$   &\textbf{4706.7(5213.7)}&      11477.3(5213.6)                                      &\textbf{ 1.2(0.9)}  & 1.9(0.4) \\      
			\hline
			$3$            &       $0.5$ & \textbf{62.1(231.4)}   &123.0(44.1)         &  \textbf{0.0(0.1)}          &    0.2(0.4)        &          4                       &      $0.5$   &86.3( 231.1)              &    \textbf{52.8(21.7)}              & \textbf{0.2(0.4)}  &  0.3(0.4) \\            
			$3$            &       $1.0$& \textbf{150.8(227.3)}   &  1012.7(752.5)   &   \textbf{0.1(0.3)}           &    0.7(0.5)     &          4                      &      $1.0$   &  119.6(189.3)              &    \textbf{87.8(82.0)}         & \textbf{0.2(0.4)}     &   1.1(1.1)  \\     
			$3$            &       $1.5$ &  \textbf{270.8(479.0)} & 12732.6(3296.8) &  \textbf{0.2(0.5)}            &      1.9(0.4)      &          4                       &      $1.5$   &   399.3(437.1)       &   \textbf{217.6(233.5)}        &          \textbf{0.4(0.7)} &     1.4(1.1)\\             
			\hline  
		\end{tabular}
	\end{scriptsize}
\end{table}

We adopt $\text{dist}_1 = d_{\mathrm{Haus}}(\widehat{\Lambda}, \Lambda^*)$ and $\text{dist}_2 = ||\widehat{\Lambda}| - |\Lambda^*||$ as the measurements.  For each scenario depicted in \Cref{fig1}, we report the mean and standard errors of $\text{dist}_1$ and $\text{dist}_2$ over 50 Monte Carlo simulations.  

As a competitor benchmark, we  consider a similar pruning algorithm based on the total variation estimator \cite{rudin1992nonlinear, tansey2015fast}, namely TV-based, instead of ours based on DCART. The implementation details are discussed in Section \ref{sec:tv}.

\begin{figure}[h!]
	\begin{center}
		\includegraphics[width=1.52in,height=1.52in]{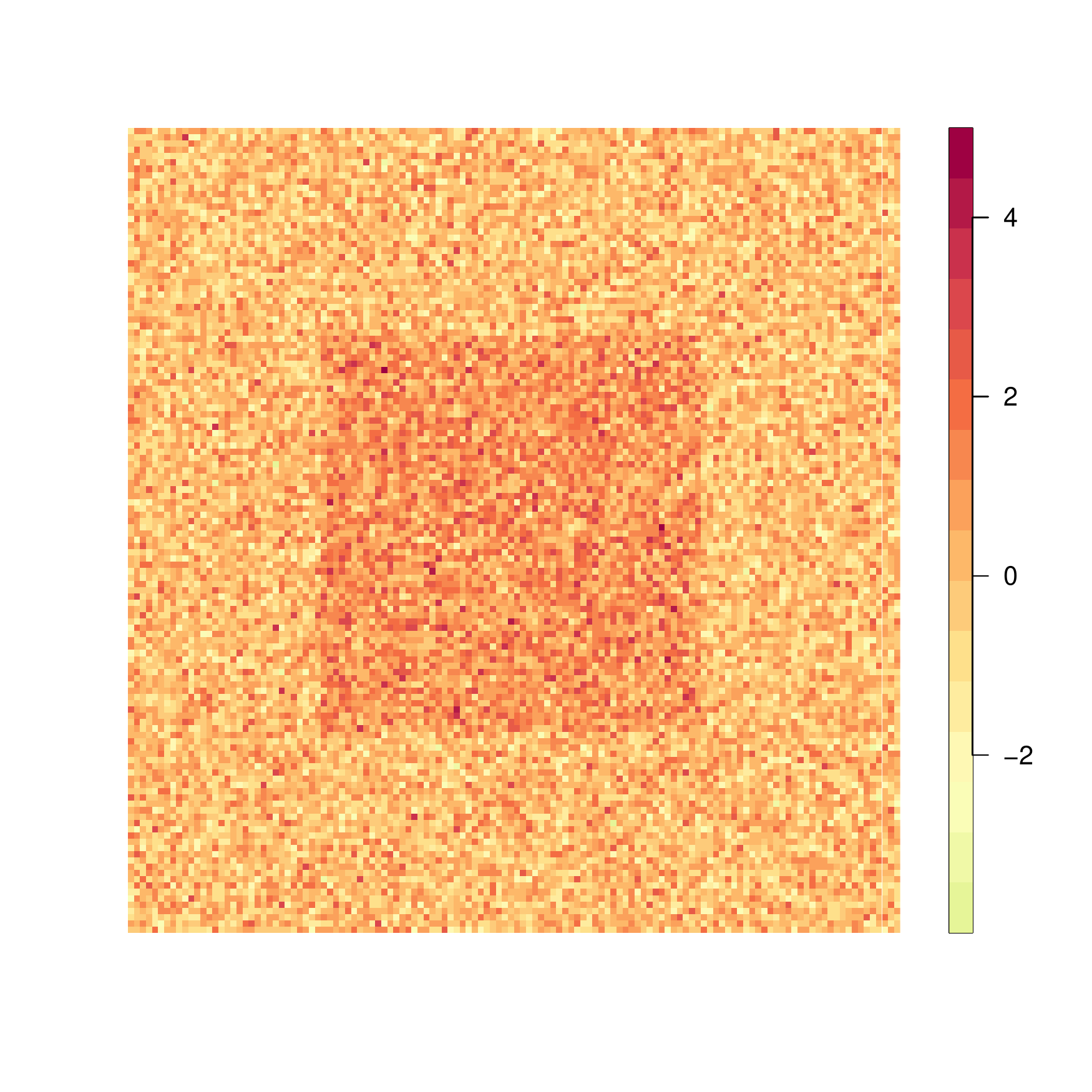} 
		\includegraphics[width=1.52in,height=1.52in]{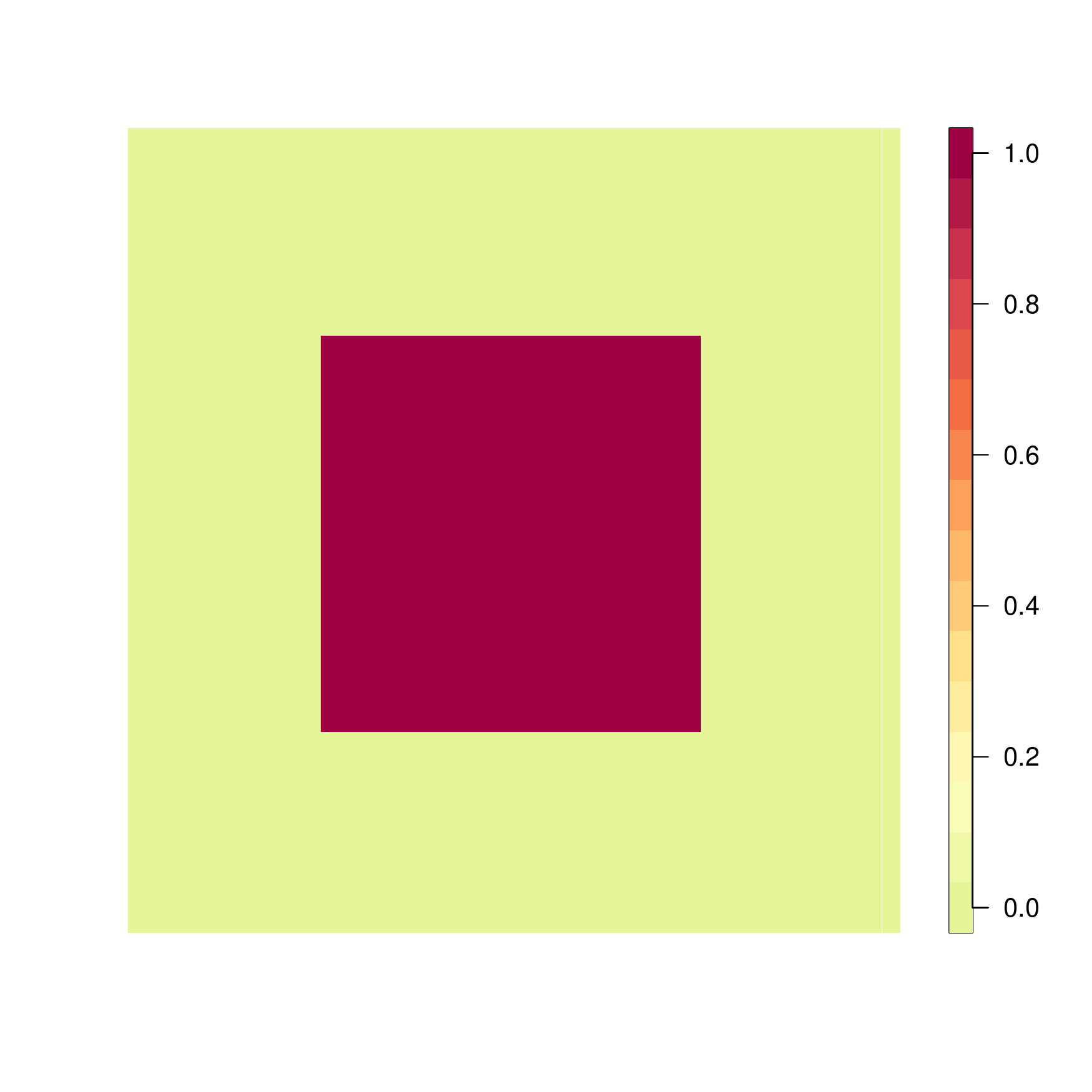} 
		\includegraphics[width=1.52in,height=1.52in]{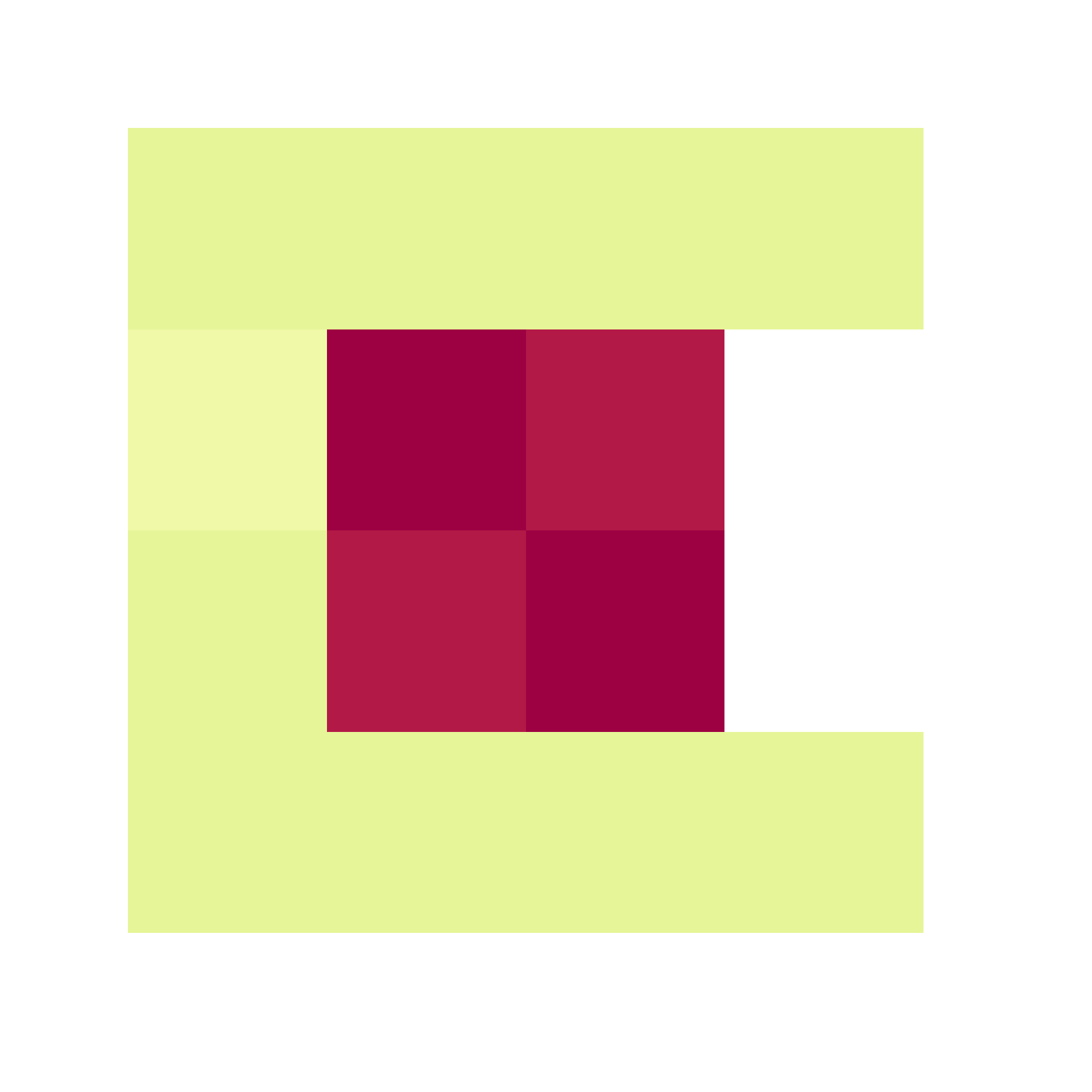} 
		\includegraphics[width=1.52in,height=1.52in]{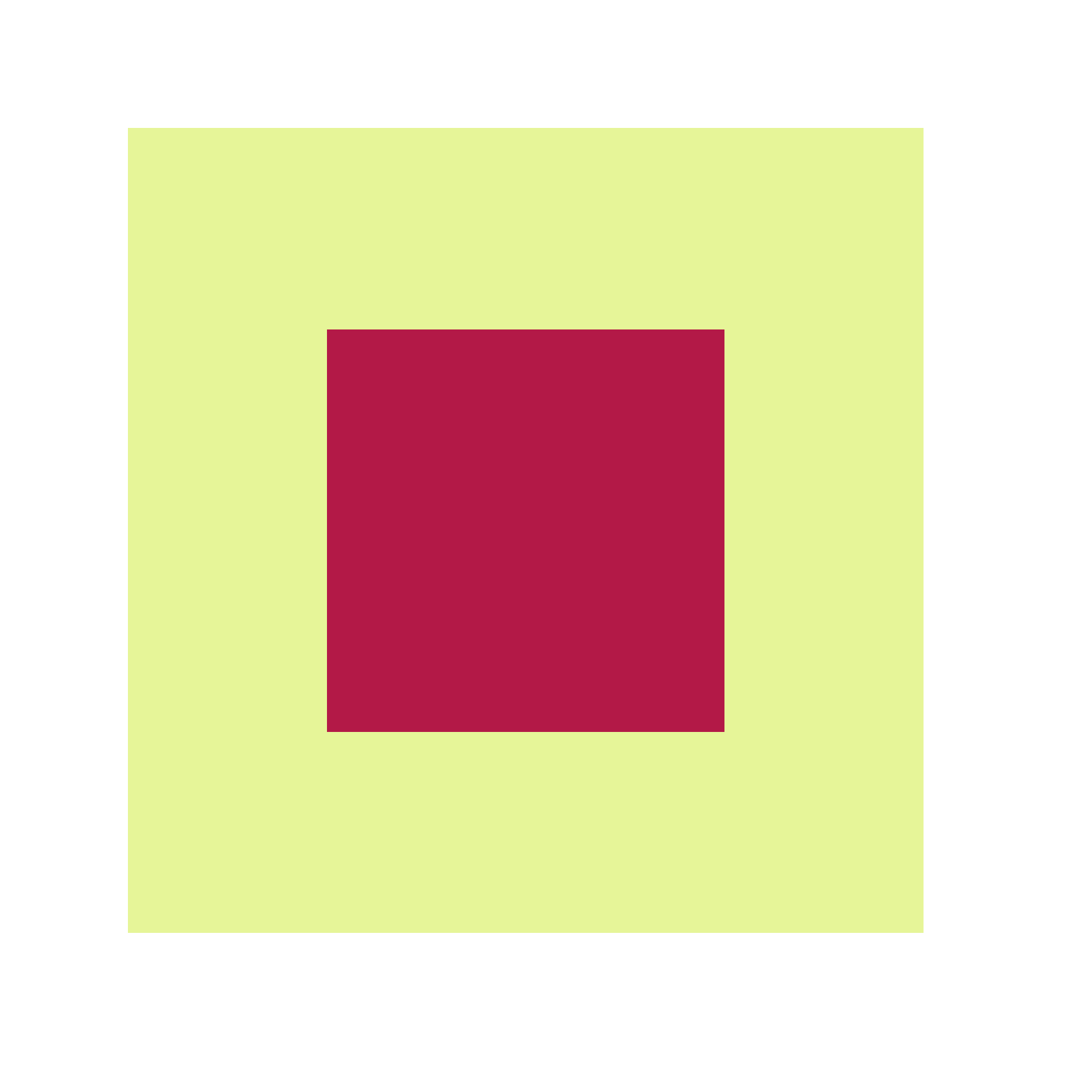}
		\includegraphics[width=1.52in,height=1.52in]{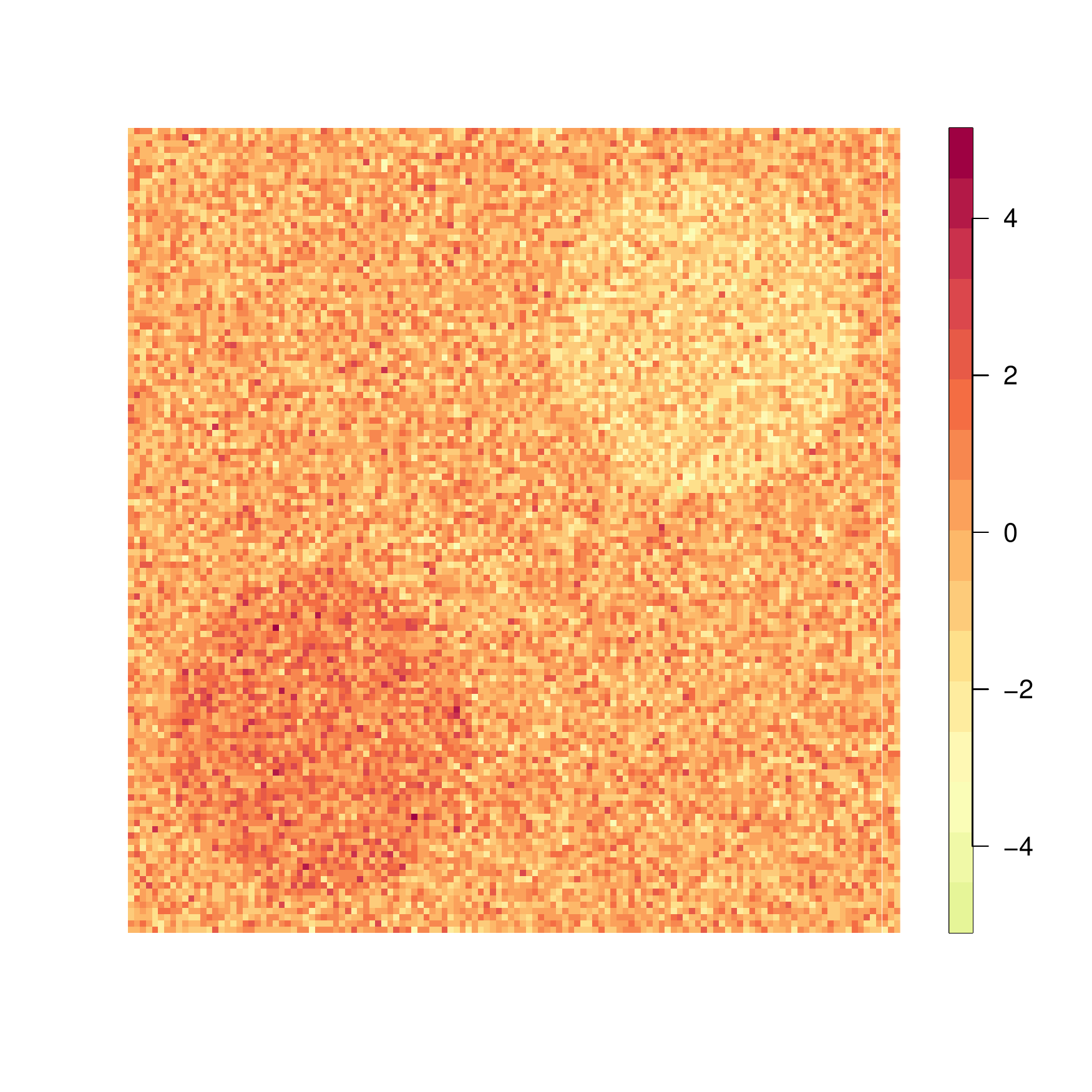} 
		\includegraphics[width=1.52in,height=1.52in]{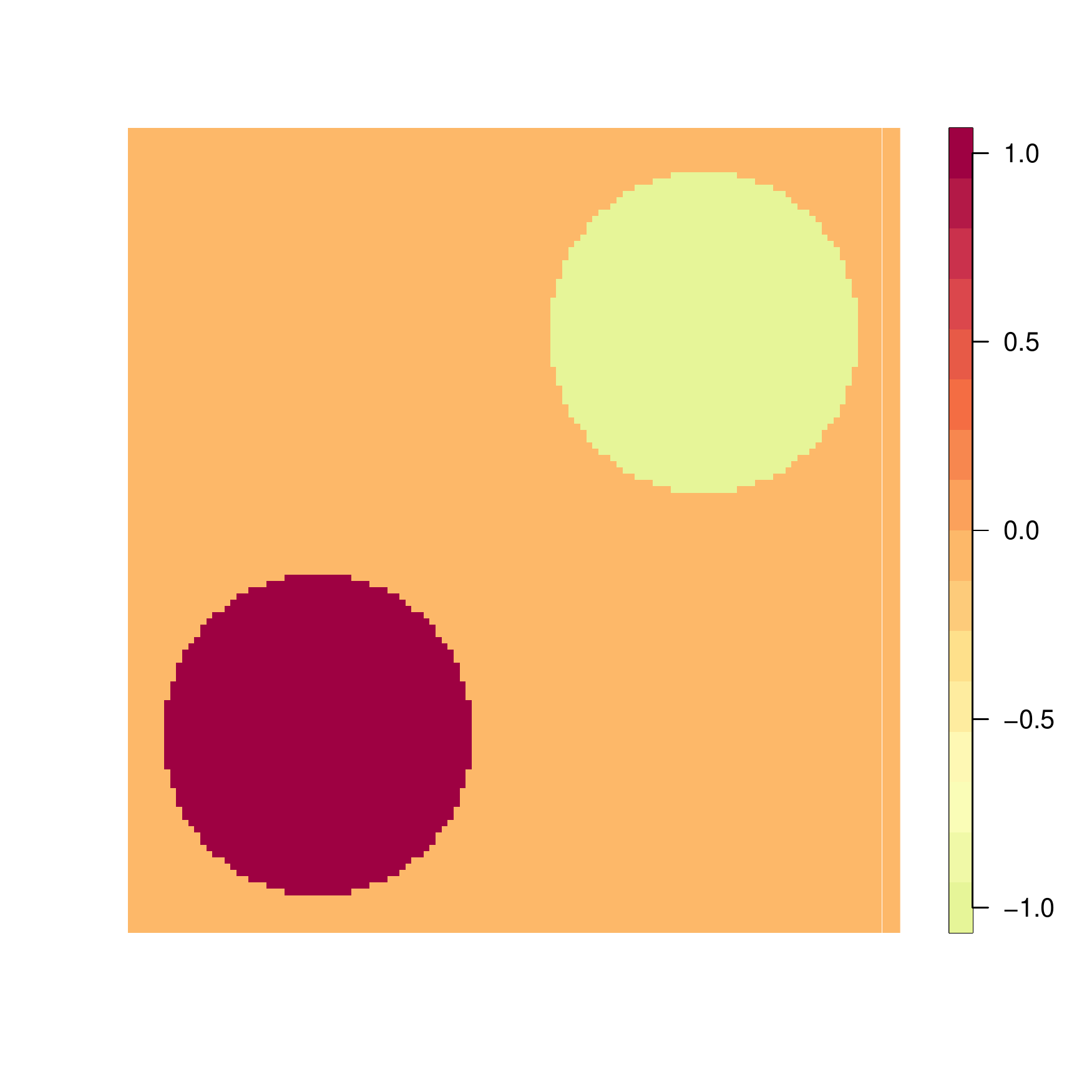} 
		\includegraphics[width=1.52in,height=1.52in]{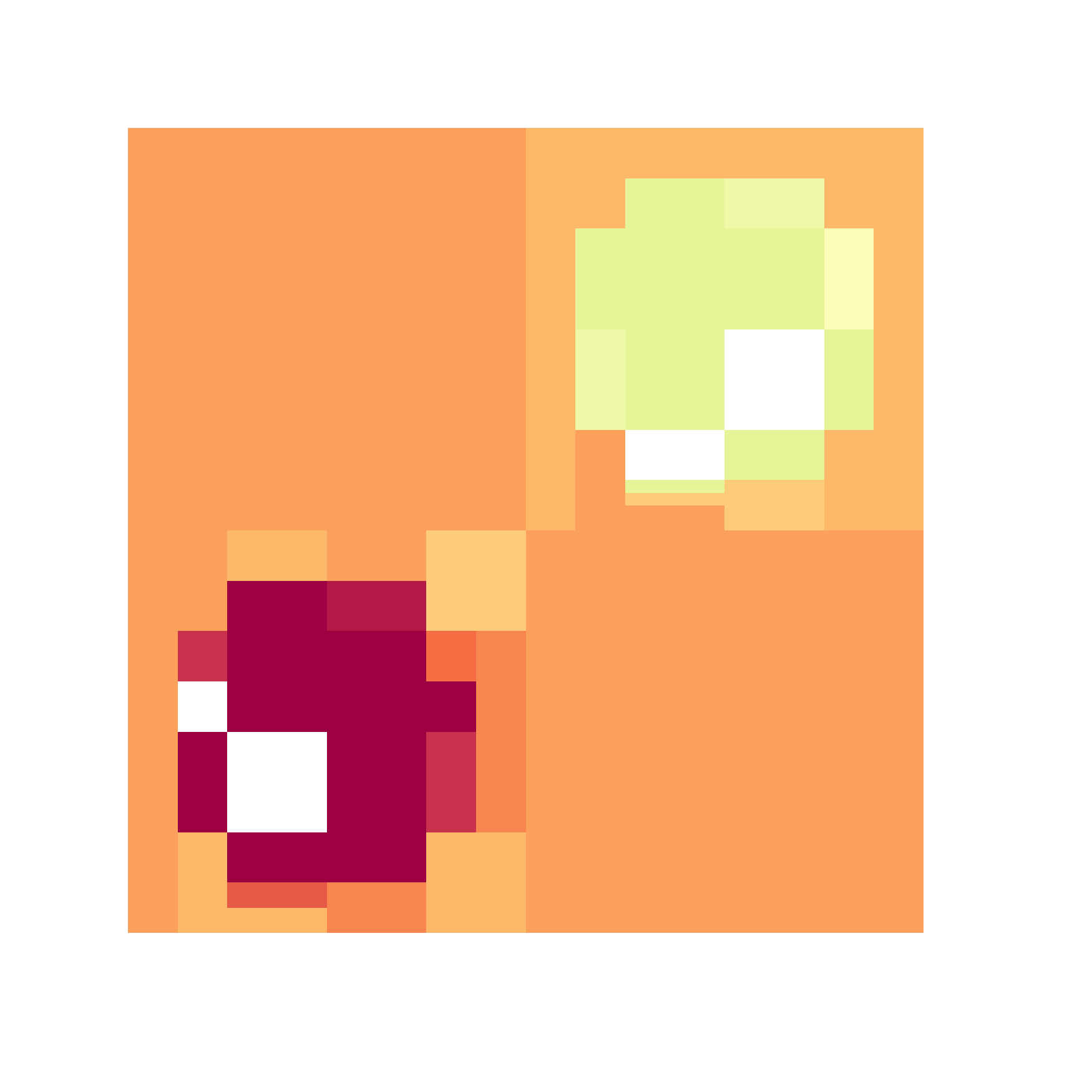} 
		\includegraphics[width=1.52in,height=1.52in]{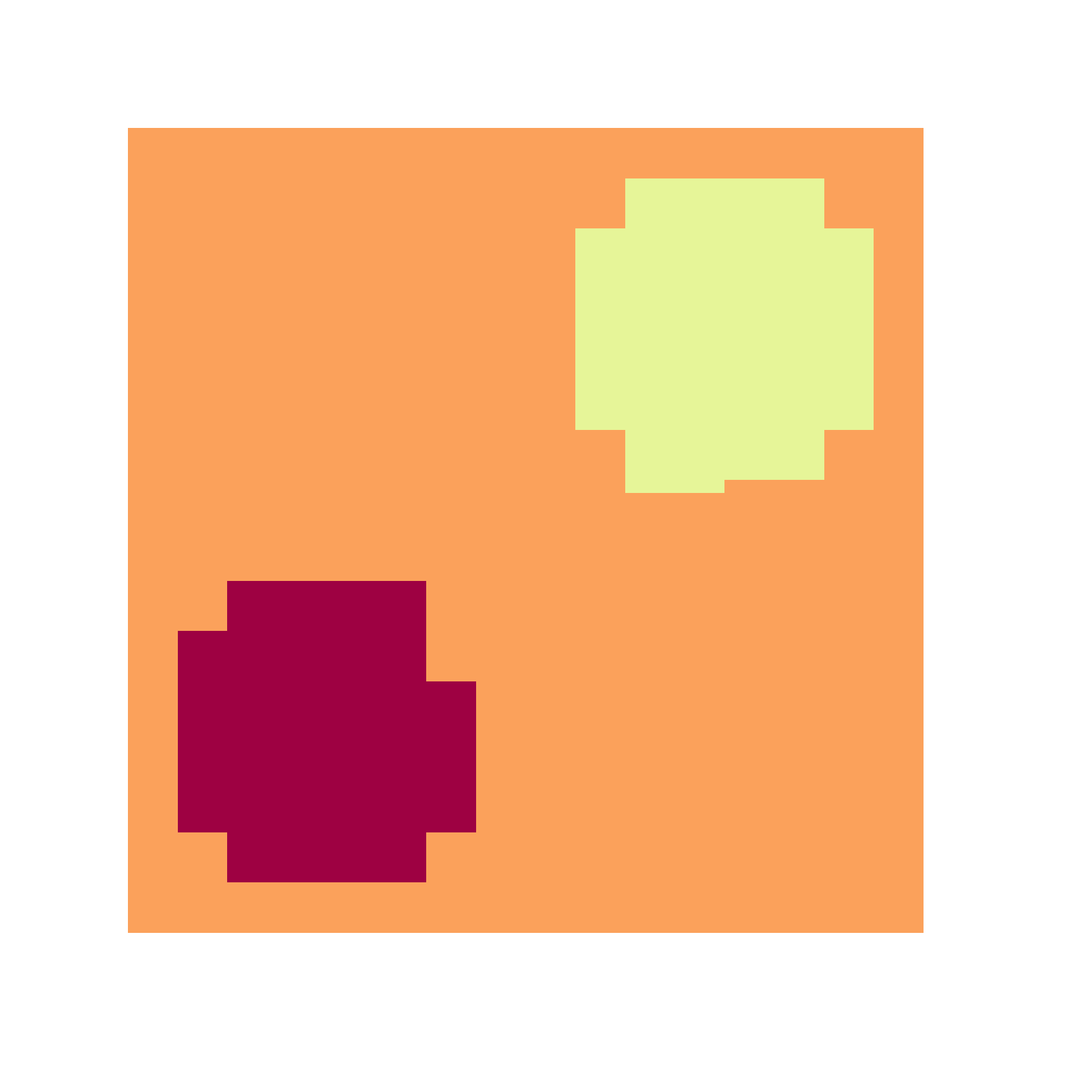}
		\includegraphics[width=1.52in,height=1.52in]{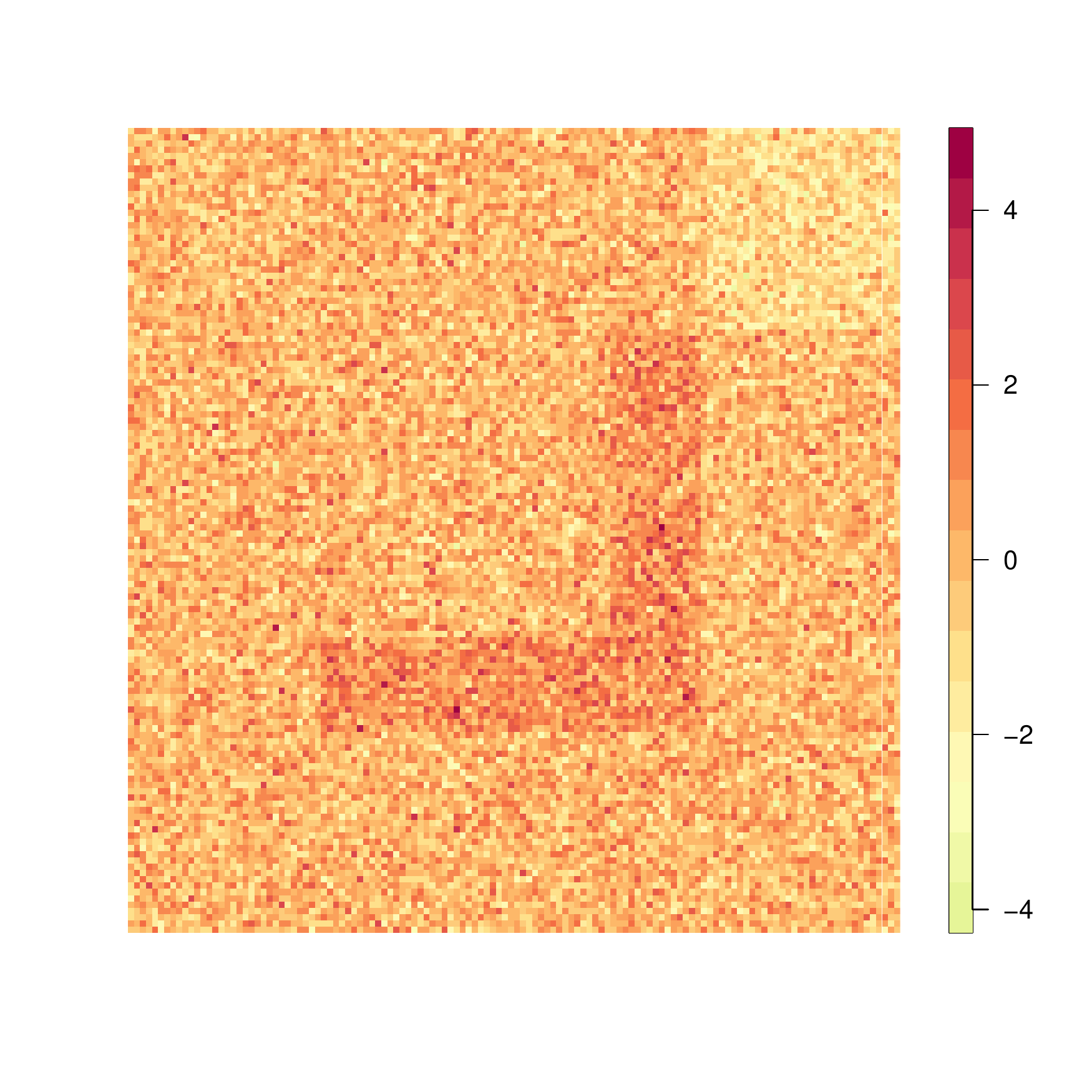} 
		\includegraphics[width=1.52in,height=1.52in]{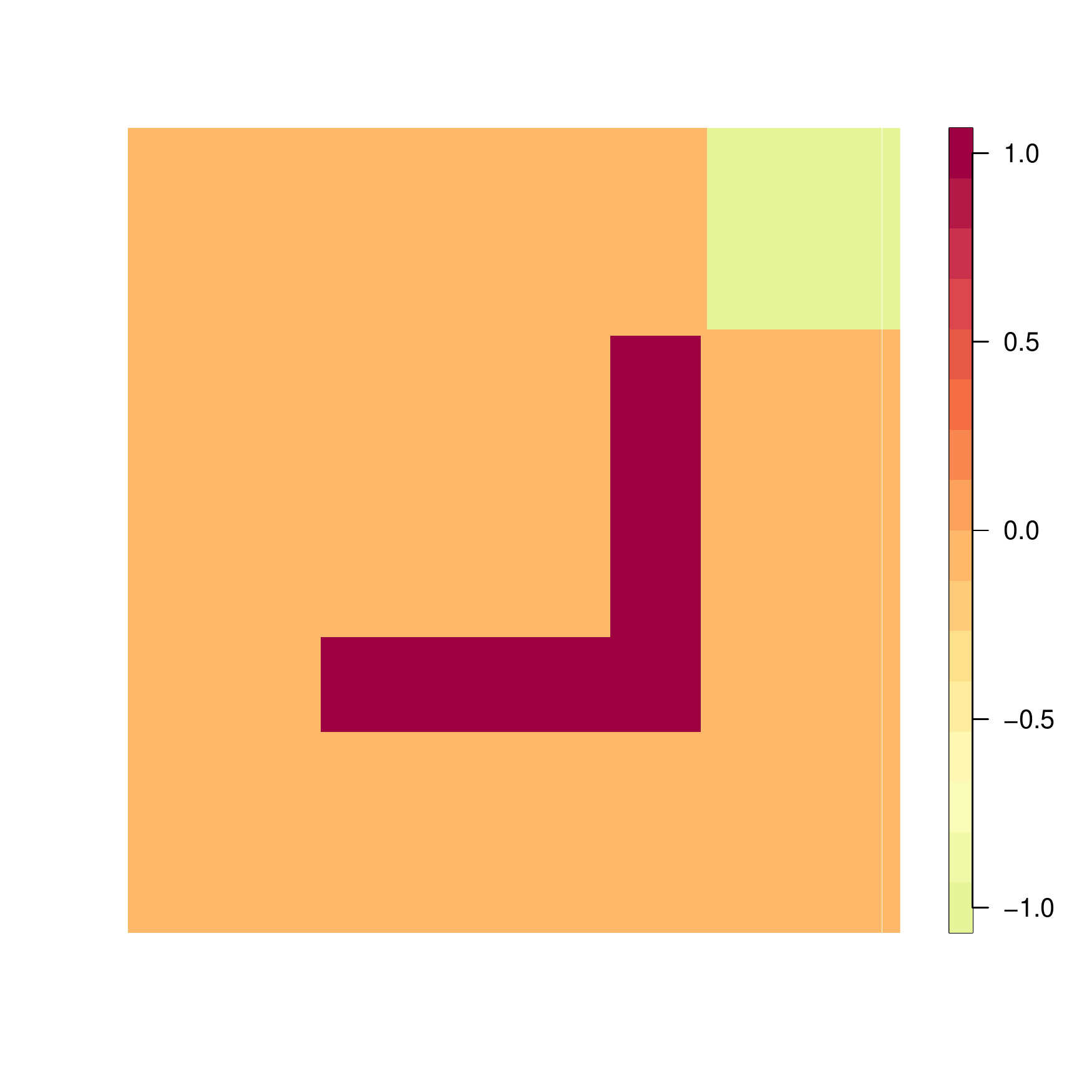} 
		\includegraphics[width=1.52in,height=1.52in]{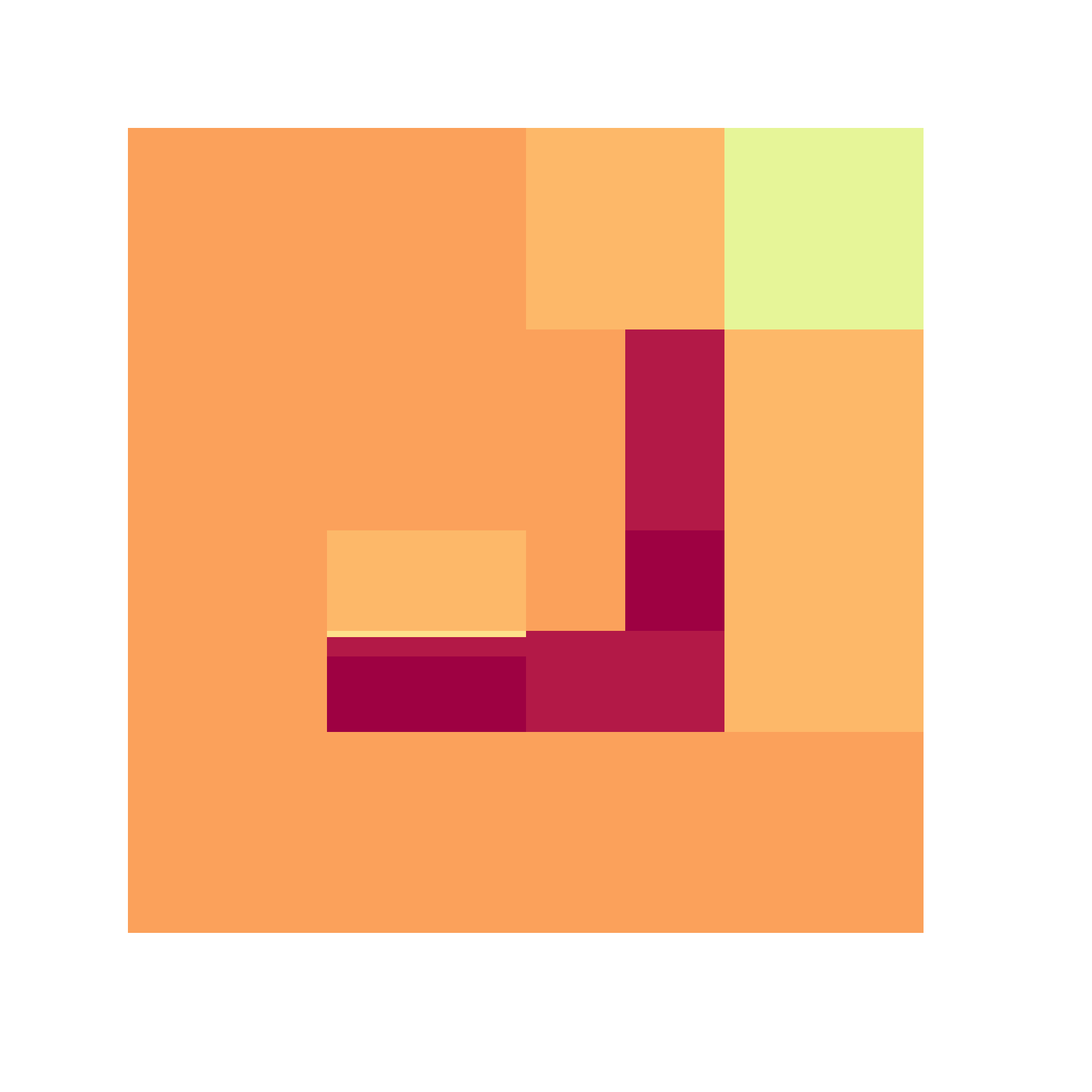} 
		\includegraphics[width=1.52in,height=1.52in]{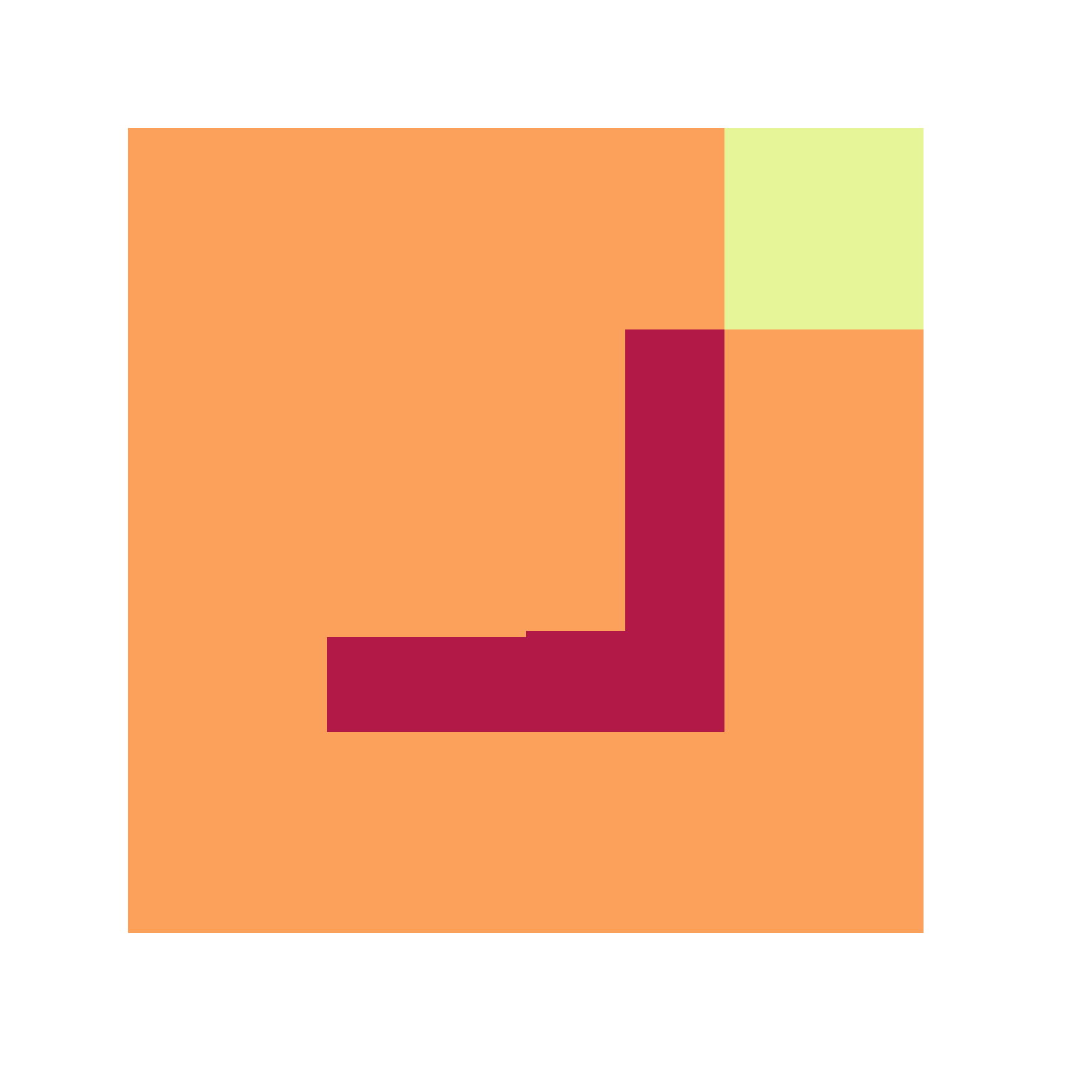} 		
		\includegraphics[width=1.52in,height=1.52in]{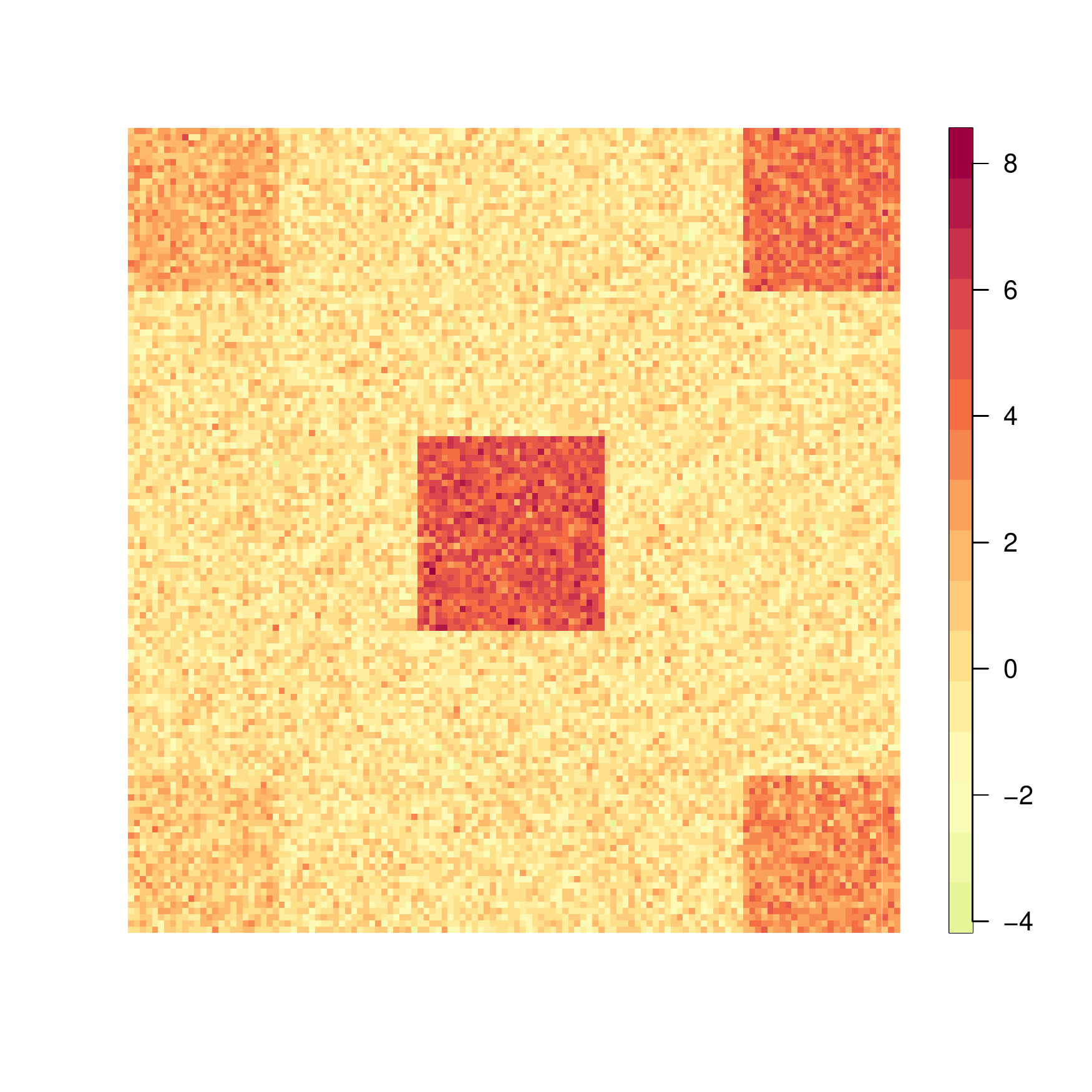} 
		\includegraphics[width=1.52in,height=1.52in]{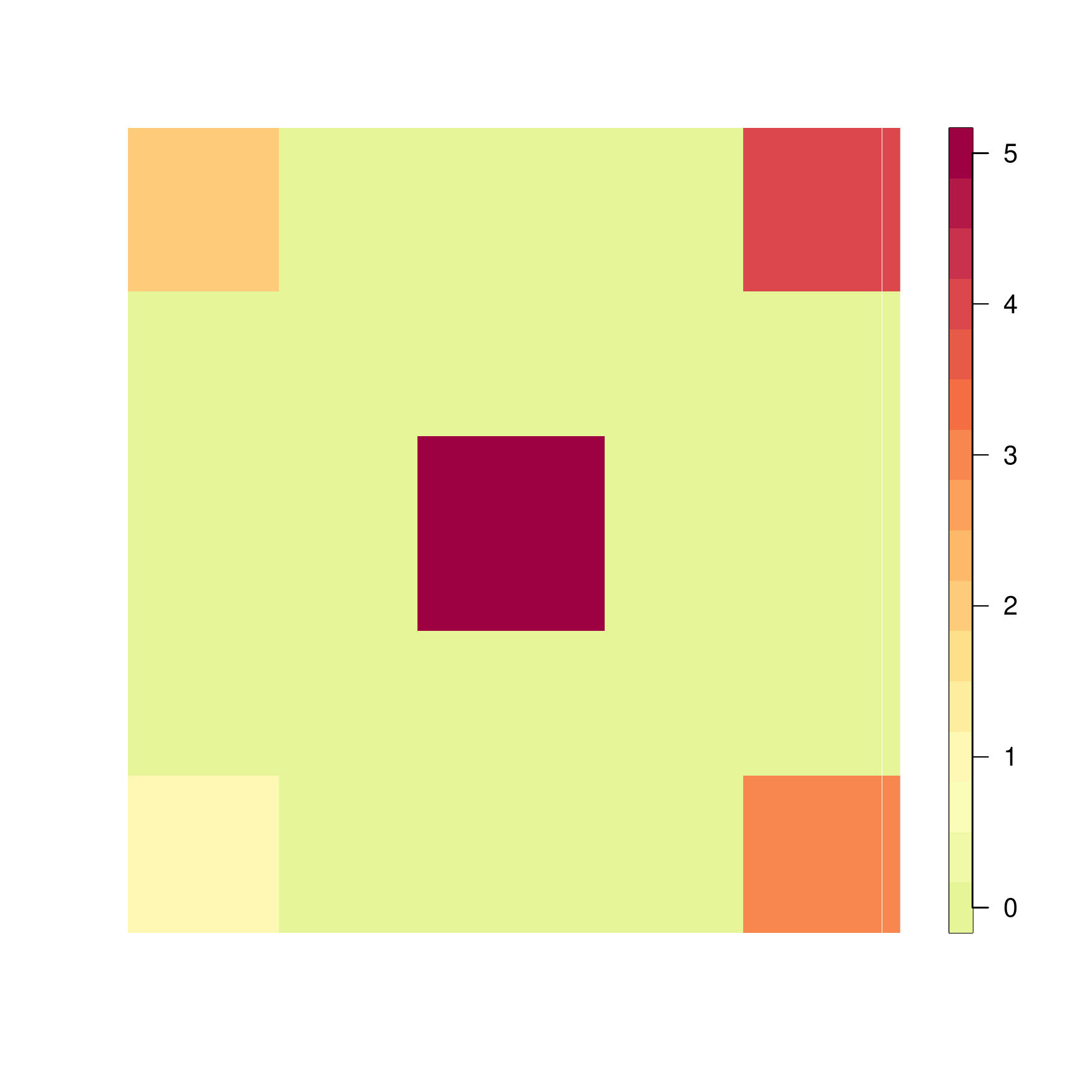} 
		\includegraphics[width=1.52in,height=1.52in]{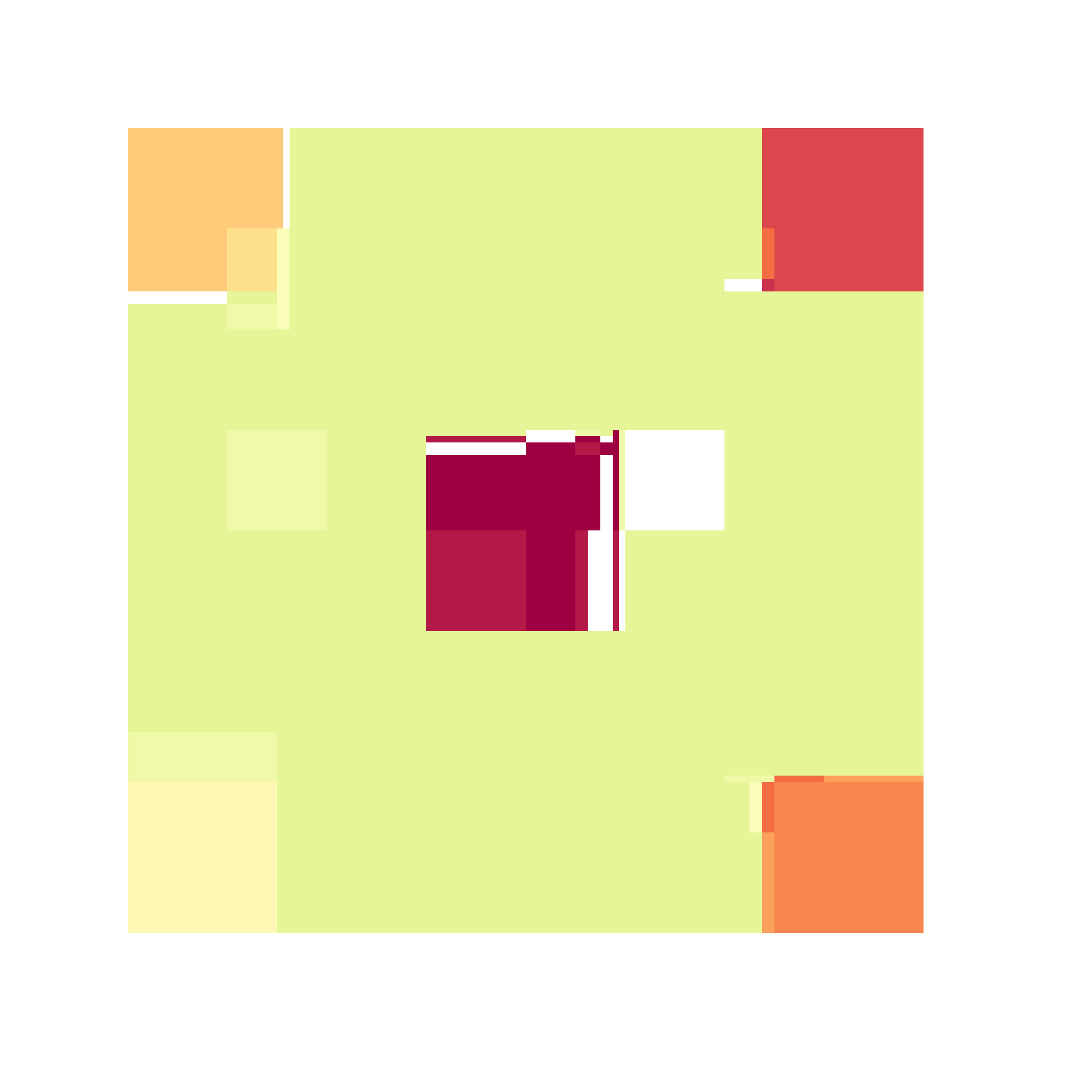} 
		\includegraphics[width=1.52in,height=1.52in]{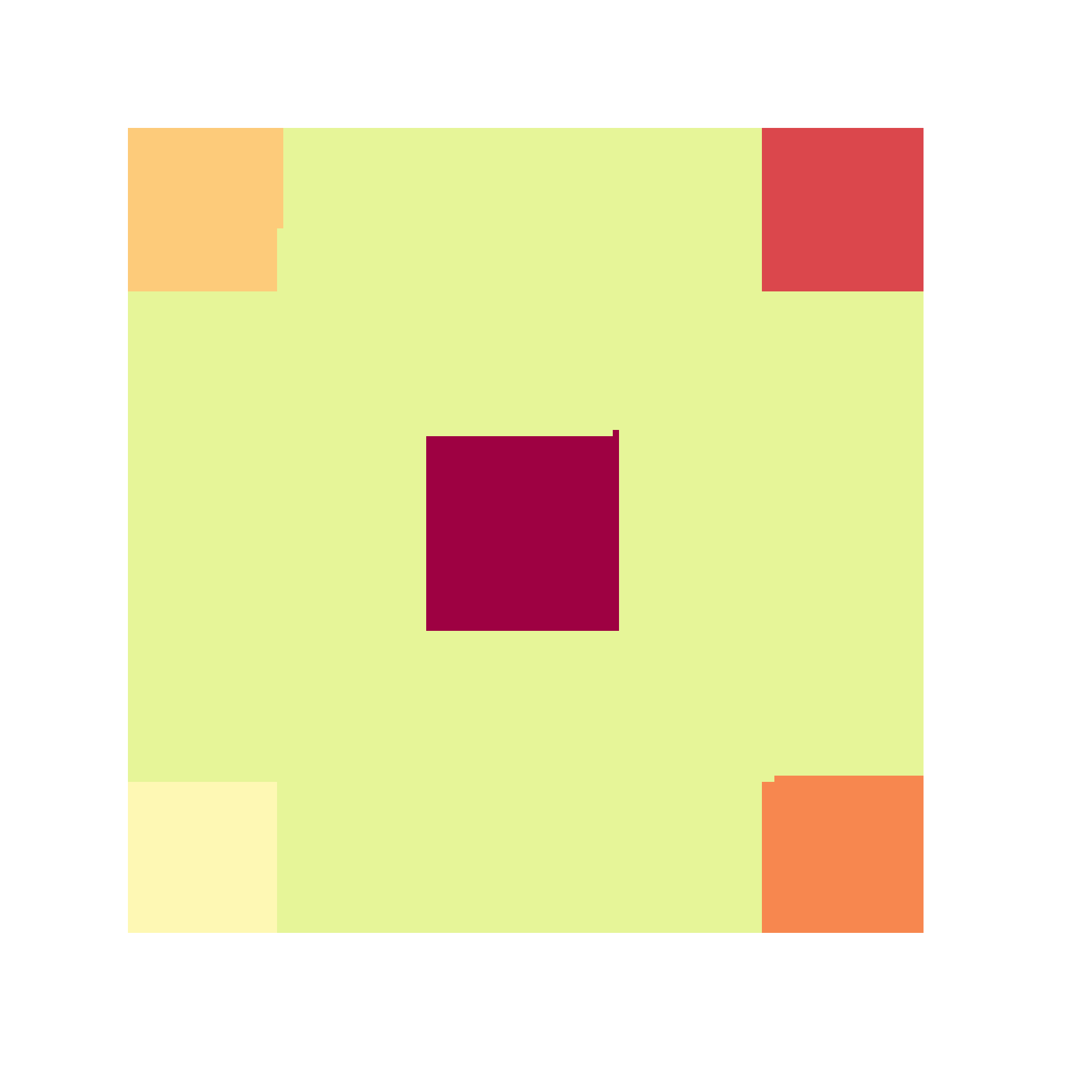} 			
		\caption{\label{fig1} From top to bottom: Scenarios 1 to 4.  From left to right: An instance of $y$, the signal $\theta^*$, DCART, and DCART after merging.  In each case  the data are generated with $\sigma =1$.}
	\end{center}
\end{figure}

For each scenario considered, we vary the noise level as $\sigma \in \{0.5,1,1.5\}$ and set  $(d, n) = (2, 2^7)$.  In each instance, the data are generated as $y \sim    \mathcal{N}(\theta^*,\sigma^2 I_{ L_{d,n}}  )$.  Detailed descriptions are in Section \ref{sec:scenarios}, and visualizations of the signal patterns are in the  second
column in \Cref{fig1}, while  the third and fourth columns depict $\tilde{\theta}$, the DCART estimator, and $\widehat{\theta}$, our  two-step estimator, respectively.  We can see that our two-step estimator correctly identifies the partition and improves upon DCART for the purpose of recovery partition.  It is worth noting that even when the partition is not rectangular, as shown in the second  row in \Cref{fig1}, our two-step estimator is still able to accurately recover a good rectangular partition.  

From Table \ref{tab1}  we see that   in terms of the metric $\text{dist}_2$ our two-step estimator outperforms TV-based estimator in all cases. Furthermore, the same is also true for most cases in terms of the metric $\text{dist}_1$.


\section{Conclusions}

In this paper we study the partition recovery problem over $d$-dimensional lattices. We show how a simple modification of DCART enjoys one-sided consistency.  To correctly identify the size of the true partition and obtain better consistency guarantees, we further propose a more sophisticated two-step estimator which is shown to be minimax optimal in some regular cases. 

Throughout this paper, we discuss partition recovery on lattice grids.  In fact, to deal with non axis-aligned data, one can construct a lattice in the domain of the features, average the observations within each cell of the lattice and ignore the cells without observations.   More details can be found in \Cref{sec-nonaxix} in the supplementary materials.

Finally,  an important open question remains regarding the necessity to include the factor $k_{\mathrm{dyad}}(\theta^*)$ in the signal-to-noise condition \eqref{eqn:snr} and in the estimation rates.   We leave for future work to investigate what the optimal estimation rates are in general  when $k_{\mathrm{dyad}}(\theta^*)$ is allowed to diverge.

\section*{Acknowledgment}


	Funding in direct support of this work: NSF DMS 2015489 and EPSRC EP/V013432/1.
	
\appendix

\section{Comparisons with some existing results}\label{sec-compar}

The procedures proposed in this paper all rely crucially on the DCART estimator  $\tilde{\theta}$ defined in \eqref{eqn:cart}.  As shown recently in \cite{chatterjee2019adaptive}, $\tilde{\theta}$ is such that $\mathbb{E}\{\|\tilde{\theta} - \theta^*\|^2\} \lesssim \sigma^2 k_{\mathrm{dyad}}(\theta^*) \log(N)$, a rate that is the minimax optimal.  \cite{chatterjee2019adaptive} also studies the de-noising performances of other rectangular partition estimators.   \cite{fan2018approximate} studied the de-noising performances of an $\ell_0$-penalized estimator for a structured signal supported over general graphs and obtained the same rates.  Both the DCART and the estimator proposed in \cite{fan2018approximate} are based on $\ell_0$-penalization.  A different approach is to instead rely on $\ell_1$-penalizations \citep[e.g.][]{tibshirani2011solution, sharpnack2012sparsistency}. 

In light of the de-noising rate, it is perhaps not surprising that the partition recovery estimation error rate of the DCART, shown in \Cref{thm1} is of order ``$\text{de-noising error bound}/\mbox{jump size}$'', but what is unsatisfactory for us is that when $d = 1$, this extra $k_{\mathrm{dyad}}(\theta^*)$ factor suggests the sub-optimality of the result.  For instance, both \cite{wang2020univariate} and \cite{verzelen2020optimal} showed that when $d = 1$, an $\ell_0$-penalized estimator is able to achieve a minimax optimal estimation error of order $\kappa^{-2}\sigma^2\log(N)$.  In \Cref{sec-optimal}, we have shown that the term $k_{\mathrm{dyad}}(\theta^*)$ can be avoided if further regularity condition is imposed.  It remains still an open problem without these regularity conditions, what the optimal estimation rate would be.  

It is also worth mentioning another stream of work, focusing on the detection boundary in detection a cluster of nodes in general graphs, including square lattices.  Although testing and estimation are two fundamentally different problems, often requiring different conditions, the detection boundaries derived thereof could be a useful reference evaluating the signal-to-noise ratio condition we impose in \eqref{eqn:snr}.  \cite{arias2008searching, arias2011detection, addario2010combinatorial}, among others, stated that the detection boundary, in our notation is $\kappa^2 \Delta \asymp $ a logarithmic term.  Such rate is derived for $k(\theta^*) = O(1)$ and suggests that our condition \eqref{eqn:snr} is optimal when $k(\theta^*) = O(1)$.  It remains an open problem to determine  the optimal estimation rate when $k(\theta^*)$ is allowed to diverge.

\section{Additional definitions}

We have repeatedly used a concept that two rectangles are adjacent.  In addition to the explanation in \Cref{def1}, we detail all the possible situations in \Cref{def-adjacent} below. 
\begin{definition}\label{def-adjacent}
	For two disjoint subsets $R_1, R_2 \subset L_{d, n}$, with $d > 1$,  $R_l =  \prod_{i=1}^{d}  [a^{(l)}_i,b_i^{(l)}]  $,  $l \in \{1,2\}$, we say that   $R_1$ and  $R_2$ are adjacent if there exists $i_0 \in [d]$,  such that  one of the following holds:
	\begin{itemize}
		\item  $b_{i_0}^{(1)}- a_{i_0}^{(2)}  = 1$ and   $\prod_{i \neq i_0} [a_i^{(1)},b_i^{(1)}] \subset  \prod_{i \neq i_0} [a_i^{(2)},b_i^{(2)}]$;
		\item $b_{i_0}^{(1)}- a_{i_0}^{(2)}  = 1$ and   $\prod_{i \neq i_0} [a_i^{(2)},b_i^{(2)}] \subset  \prod_{i \neq i_0} [a_i^{(1)},b_i^{(1)}]$;
		\item $a_{i_0}^{(1)} - b_{i_0}^{(2)} = 1$ and   $\prod_{i \neq i_0} [a_i^{(1)},b_i^{(1)}] \subset  \prod_{i \neq i_0} [a_i^{(2)},b_i^{(2)}]$;  
		\item $a_{i_0}^{(1)} - b_{i_0}^{(2)} = 1$ and   $\prod_{i \neq i_0} [a_i^{(2)},b_i^{(2)}] \subset  \prod_{i \neq i_0} [a_i^{(1)},b_i^{(1)}]$. 
	\end{itemize}
\end{definition}

\section{Proofs of main results}

This section contains the proofs of the main results from \Cref{sec-theory}.  \Cref{thm1} demonstrates the one-sided consistency of DCART. This result is not only interesting on its own but is also used repeatedly and in as essential away to prove two-sided consistency.  For readability, we express the two main claims of Theorem \ref{thm1}, namely \eqref{eq-thm1-1} and \eqref{eq-thm1-2}, as the  events
\begin{equation}\label{eq-a1-def}
	\mathcal{A}_1 \,=\,  \left\{ 	\sum_{j \in [k(\tilde{\theta})]} \vert R_j \backslash S_j \vert\leq C_3\kappa^{-2} \sigma^2  k_{\mathrm{dyad}}(\theta^*)\log(N)	\right\}
\end{equation}
and
\begin{equation}\label{eq-a2-def}
	\mathcal{A}_2\,=\, \left\{ \vert  R_j \backslash S_j \vert \leq C_4 \kappa_j^{-2}\sigma ^2   k_{\mathrm{dyad}}(\theta^*_{R_j}) \log(N), \quad j\in[k(\tilde{\theta})] \mbox{ and } R_j \setminus S_j \neq \emptyset \right\},
\end{equation}
respectively

\subsection{One-sided consistency of DCART}

\begin{proof}[Proof of \Cref{thm1}]
	For $j \in [k(\tilde{\theta})]$, if $R_j \setminus S_j \neq \emptyset$, then let $r_j$ be the smallest positive integer such that there exists a partition of $R_j$, namely $\{T_{j,1}, \ldots, T_{j, r_j }, S_j\}$ with $\theta_i^* =  a_{j,l}$, for all $i \in T_{j,l}$, $l \in [r_j]$.	
	
	Without loss of generality assume that $0 < \vert T_{j,1} \vert   \leq   \vert T_{j,2 }\vert  \leq    \ldots \leq  \vert T_{j,r_j} \vert \leq \vert S_j\vert$, for each $j \in [k(\tilde{\theta})]$.  Suppose that  $r_{j}$ is even.  Then
	\[
	\begin{array}{lll}
		\displaystyle 		\vert R_j \backslash S_j\vert   & = &\displaystyle    \sum_{l=1}^{  r_j/2  }   \vert  T_{j,   2l-1 }\vert    \,+\,\sum_{l=1}^{  r_j/2  }   \vert  T_{j,   2l }\vert \\
		& =&  \displaystyle    \sum_{l=1}^{  r_j/2  } \min\{ \vert  T_{j,   2l-1 }\vert ,\vert  T_{j,   2l }\vert  \}     \,+\,\sum_{l=1}^{  r_j/2 -1 }   \min\{\vert  T_{j,   2l }\vert , \vert  T_{j,   2l+1 }\vert \}   +    \min\{  \vert  T_{r_j} \vert ,\vert S_j\vert  \}\\
		& \leq& \displaystyle 2\sum_{l=1}^{r_j/2}   \frac{  \vert   T_{j,   2l-1 }\vert \,\vert  T_{j,   2l }\vert  }{\vert   T_{j,   2l-1 }\vert + \vert  T_{j,   2l }\vert }   +  2\sum_{l=1}^{r_j/2-1}   \frac{  \vert   T_{j,   2l }\vert \,\vert  T_{j,   2l +1}\vert  }{\vert   T_{j,   2l }\vert + \vert  T_{j,   2l+1 }\vert }   + 2 \frac{\vert T_{r_j} \vert   \, \vert   S_j \vert }{\vert T_{r_j} \vert   + \vert   S_j \vert}\\
		& \leq&  \displaystyle \frac{2}{\kappa_j^2} \sum_{l=1}^{r_j/2}   \frac{  \vert   T_{j,   2l-1 }\vert \,\vert  T_{j,   2l }\vert  }{ | T_{j,   2l-1 }\vert+\vert  T_{j,   2l }\vert } (  a_{j,2l-1 } - a_{j,2l }  )^2  \\
		&& \displaystyle + \frac{2}{\kappa_j^2} \sum_{l=1}^{r_j/2-1}   \frac{  \vert   T_{j,   2l}\vert \,\vert  T_{j,   2l+1 }\vert  }{\vert   T_{j,   2l}\vert +\vert  T_{j,   2l+1 }\vert } (  a_{j,2l } - a_{j,2l +1}  )^2  +   \frac{2}{\kappa_j^2}\frac{ \vert   S_j \vert \, \vert T_{r_j} \vert   }{\vert   S_j \vert + \vert T_{r_j} \vert  }  (     \bar{\theta}^*_{S_j}  -   a_{r_j}     )^2 \\
		& \leq&
		\displaystyle \frac{2}{\kappa_j^2}  \sum_{l=1}^{r_j/2}   \sum_{i   \in  T_{j,   2l-1 } \cup T_{j,   2l }} ( \theta^*_i -    \bar{\theta}^*_{  T_{j,   2l-1 } \cup T_{j,   2l }}    )^2  \\
		& &   \displaystyle +\frac{2}{\kappa_j^2}   \sum_{l=1}^{r_j/2-1} \sum_{i   \in  T_{j,   2l} \cup T_{j,   2l +1}} ( \theta^*_i -    \bar{\theta}^*_{  T_{j,   2l } \cup T_{j,   2l +1}}    )^2  +   \frac{2}{\kappa_j^2} \sum_{i \in S_j \cup    T_{r_j} } ( \theta_i^* -   \bar{\theta}^*_{   S_j  \cup    T_{r_j} }      )^2\\ 
		&\leq  & \displaystyle  \frac{2}{\kappa_j^2}  \sum_{l=1}^{r_j/2}   \sum_{i   \in  T_{j,   2l-1 }\cup T_{j,   2l } } ( \theta^*_i -    \bar{\theta}^*_{R_j}   )^2   \,+\,  \frac{2}{\kappa_j^2}  \sum_{l=1}^{r_j/2-1}   \sum_{i   \in  T_{j,   2l}\cup T_{j,   2l +1} } ( \theta^*_i -    \bar{\theta}^*_{R_j}   )^2  \\
		& &   \displaystyle + \frac{2}{\kappa_j^2} \sum_{i \in S_j \cup    T_{r_j} } ( \theta_i^* -   \bar{\theta}^*_{   R_j  }      )^2 \leq \frac{4}{\kappa_j^2}     \sum_{i \in R_j }    ( \theta_i^* -   \bar{\theta}^*_{   R_j  }      )^2,
	\end{array}
	\]
	where the first inequality follows from \Cref{lem1}.  The same bounds holds also when $r_j$ is odd.  Hence,
	\begin{equation}
		\label{eqn:important}	 \begin{array}{lll}
			\displaystyle 	\vert R_j \backslash S_j\vert    &\leq &  \displaystyle \frac{8}{\kappa_j^2}     \sum_{i \in R_j }  ( \theta_i^* -   \bar{y}_{   R_j  }      )^2  +   \frac{8}{\kappa_j^2}     \sum_{i \in R_j }  ( \bar{\theta}^*_{   R_j  } -   \bar{y}_{   R_j  }      )^2\\ 
			&= &\displaystyle \frac{8}{\kappa_j^2}  \sum_{i \in R_j }  ( \theta_i^* -   \tilde{\theta}_{  i  }      )^2  + \frac{8}{\kappa_j^2}   \vert R_j\vert( \bar{\theta}^*_{   R_j  } -   \bar{y}_{   R_j  }      )^2.
		\end{array}
	\end{equation}
	
	Let $\Omega_1$ and $\Omega_3$ be the events defined below in \eqref{eq-omega-1-def} and \eqref{omega3}, respectively.  In the event $\Omega_1 \cap \Omega_3$, the result \eqref{eq-thm1-2}, i.e.~the event $\mathcal{A}_2$ defined in \eqref{eq-a2-def}, is a direct consequence of \eqref{eqn:important}.  
	
	Let $\Omega_2$ be the event defined in \eqref{omega2}.  In the event $\Omega_1 \cap \Omega_2 \cap \Omega_3$, it follows from \eqref{eqn:important} that \eqref{eq-thm1-1}, i.e.~the event $\mathcal{A}_1$ defined in \eqref{eq-a1-def}, holds. To be specific, we have that
	\begin{align*}
		\sum_{j=1}^{k(\tilde{\theta})} \vert R_j \backslash S_j\vert &  \leq \sum_{j=1}^{  k(\tilde{\theta})  }\bigg[\frac{8}{\kappa^2}  \sum_{i \in R_j }  ( \theta_i^* -     \tilde{\theta}_{  i  }      )^2  + \frac{8}{\kappa^2}   \vert R_j\vert( \bar{\theta}^*_{   R_j  } -   \bar{y}_{   R_j  }      )^2\bigg] \\
		& \leq \frac{8}{\kappa^2} \|\theta^*   - \tilde{\theta}\|^2 + \frac{8 c_1 c_3 \sigma^2  k_{\mathrm{dyad}}(\theta^*) \log (N)   }{\kappa^2} \leq \frac{C_3 \sigma^2 \log(N) k_{\mathrm{dyad}}(\theta^*)}{\kappa^2}.
	\end{align*}
	
	Finally, note that the final theorem claim \eqref{eq-thm1-3} is shown in Lemma \ref{lem8}.
\end{proof}

\subsection{Two-sided consistency of DCART: a two-step constrained estimator}

\begin{proof}[Proof of \Cref{thm3}]
	The proof of (\ref{eqn:upper_3}) is identical to that of Theorem \ref{thm2} with one difference. The rectangular partition induced by $\widehat{\theta}$ is such that    
	\[
	\min\{  \vert R_i \vert, \vert R_j\vert\} \geq  \eta \geq c_1\frac{k_{\mathrm{dyad}}(\theta^*)\sigma^2 \log (N)}{\kappa^2}.
	\]
	As a result, we do not need to account for the term $\sum_{j  \,:\,  \vert  R_j\vert \leq  \eta }  \vert  R_j\vert$,
	and this is  the only part of the proof  of Theorem \ref{thm2}   that requires the stronger requirement in Assumption \ref{as4}. The rest of the proof goes through using Assumption \ref{as5}.
\end{proof}

\subsection{Optimality: A regular boundary case}

\begin{proof}[Proof of Corollary~\ref{cor1}]
	Let $\widehat{\theta}$ be the estimator of $\theta^*$ defined in \eqref{eqn:cart2}.  Let $\{R_l\}_{l \in [k(\widehat{\theta})]}$ be a rectangular partition of $L_{d, n}$ induced by $\widehat{\theta}$ and let $S_j$ be the largest subset of $R_j$ with constant $\theta^*$ value, for $j \in [k(\widehat{\theta})]$.  Let $\mathcal{J} \subset [k(\widehat{\theta})]$, such that $R_j \setminus S_j \neq \emptyset$, $j \in \mathcal{J}$.  With the notation in \Cref{thm1}, define the event $\mathcal{A}_3$ as
	\begin{equation} \label{eqn:part4}
		\mathcal{A}_3 = \left\{ \vert  R_j \backslash S_j \vert  \leq  C_4\frac{\sigma ^2   k_{\mathrm{dyad}}(\theta^*_{R_j}) \log(N)}{\kappa^2}, \quad j \in \mathcal{J} \right\} \cap  \left\{    k(\widehat{\theta}) \leq  c_1  k_{\mathrm{dyad}}(\theta^*) \right\}.
	\end{equation}
	It follows from \eqref{eq-thm1-2} that the event $\mathcal{A}_3$ holds with probability at least $1-N^{-c}$ for some  positive constants $c, C_4$ and $c_1$.  The rest of this proof is conducted in the event $\mathcal{A}_3$.
	
	For any $j \in \mathcal{J}$.  Let $A, B \in \Lambda^*$ be that $A \neq B$, $\bar{\theta}^*_A = \bar{\theta}^*_B$ and $(R_j \cap A)\cup(R_j \cap  B)  \subset S_j$.  Then it follows from an almost identical argument as that in \textbf{Step 1.1} in the proof of Theorem \ref{thm2}, we see that Assumption \ref{as5} leads to a contradiction.  It follows  that  $S_j $ is a connected set in $L_{d,n}$. Hence, we let $A\in \Lambda^*$  be such that  $R_j \cap  A = S_j $.
	
	Suppose now that $B \in \Lambda^*$  and  $R_j \cap  B \subset R_j \backslash S_j$. Since  
	\[
	\vert  R_j \backslash S_j \vert  \,\leq \,    C _4\frac{\sigma ^2   k_{\mathrm{dyad}}(\theta^*_{R_j}) \log (N) }{\kappa^2}   \,\leq \,  C _4\frac{\sigma ^2   k_{\mathrm{dyad}}(\theta^*) \log (N)}{\kappa^2},
	\]
	recalling that $\mathrm{dist}(A, B) = \min_{a \in A, b \in B}\|a - b\|$, it holds that 
	\[
	\mathrm{dist}(A,B)  \,\leq  \,  C_5 \frac{\sigma ^2   k_{\mathrm{dyad}}(\theta^*) \log (N)}{\kappa^2} 
	\]
	for some constant $C_5 > 0$. Hence, 
	\[
	\left\vert\{ B \in \Lambda^*\,:\,    R_j \cap B  \subset R_j \backslash S_j  \} \right\vert \,\leq \,   \left\vert \left\{  B  \in \Lambda^* \backslash\{A\} :\,   \mathrm{dist}(A,B) \leq  \frac{c \sigma^2 k_{\mathrm{dyad}}(\theta^*) \log (N)}{\kappa^2} \right\} \right\vert\,\,\leq C,
	\]
	where the second inequality follows from Assumption \ref{as6}. As a result,
	\begin{equation}
		\label{eqn:error}
		\vert  R_j \backslash S_j \vert  \,\leq \,    C _4C\frac{\sigma ^2 \log (N)}{\kappa^2}.
	\end{equation}
	
	It follows from an identical argument as that in \textbf{Step 5} in the proof of Theorem \ref{thm2} that, for any $A \in \Lambda^*$ there exists  $\hat{A}\in \widehat{\Lambda}$  such that 
	\begin{align} \label{eqn:error2}
		\vert  \hat{A} \backslash A\vert   \leq  &\displaystyle    \sum_{  j \in I_A } \vert R_j \backslash  S_j  \vert \leq  C _4C\frac{\sigma ^2 \log (N)}{\kappa^2}   \vert\left\{  j \, :\,  R_j\cap A = S_j\right\}\vert \nonumber \\
		\leq & C _4 C\frac{\sigma ^2 k(\widehat{\theta})\log (N)}{\kappa^2}  \leq C _6\frac{\sigma ^2 k_{\mathrm{dyad}}(\theta^*)\log (N)}{\kappa^2},
	\end{align}
	for some constant $C_6>0$, where the second inequality follows from (\ref{eqn:error}). Hence,
	\begin{align*}
		\vert\left\{  j \, :\,  R_j\cap A = S_j\right\}\vert & \leq \eta^{-1} \sum_{j \in I_A} \vert  R_j \vert \leq \vert \hat{A } \vert/\eta \leq (|A| + |\hat{A}\setminus A|)/\eta \\
		& \leq \vert  A\vert/\eta + C _6\frac{\sigma ^2 k_{\mathrm{dyad}}(\theta^*)\log (N)}{\eta\kappa^2}   \leq  \frac{ C' \vert  A\vert      }{\eta},
	\end{align*}
	where $C' > 0$ is an absolute constant.  Combining the above with (\ref{eqn:error2})  we arrive at 
	\[
	\vert  \hat{A} \backslash A\vert  \leq \frac{C'' \sigma ^2 \log (N)}{\kappa^2}  \frac{  \vert  A\vert      }{\eta}.
	\]
	
	To bound the difference from the other direction, we have that 
	\begin{align} \label{eqn:error4}
		& \vert  A \backslash \hat{A}\vert \leq \sum_{  j \,:\,  R_j \cap A \notin \{\emptyset,  S_j  \}  } \vert R_j \backslash  S_j  \vert \nonumber \\
		\leq & \sum_{j\,:\, \exists   B  \in \Lambda^*, \,\,\,  R_j\cap  B = S_j, \,\,\, \mathrm{dist}(A,B)\leq  c \sigma^2 \kappa^{-2} k_{\mathrm{dyad}}(\theta^*) \log (N)       }\vert R_j \backslash  S_j  \vert \nonumber \\
		\leq &  C _7\frac{\sigma ^2 \log (N)}{\kappa^2}  \,\left\vert \left\{ j\,:\, \exists   B  \in \Lambda^*, \,\,\,  R_j\cap  B = S_j, \,\,\, \mathrm{dist}(A,B)\leq  c \sigma^2 \kappa^{-2} k_{\mathrm{dyad}}(\theta^*) \log (N) \right\} \right\vert  \nonumber \\
		\leq &  C _7\frac{\sigma ^2 \log (N)}{\kappa^2}  \, k(\widehat{\theta}) \leq  C _8\frac{\sigma ^2 \log (N)}{\kappa^2}  \, k_{\mathrm{dyad}}(\theta^*),
	\end{align}
	for some constants $C_7,C_8>0$, where the second inequality follows from  Assumption \ref{as5}, and the last from  (\ref{eqn:error}). Hence,
	\[
	\begin{array}{l}
		\left\vert \left\{ j\,:\, \exists   B  \in \Lambda^*, \,\,\,  R_j\cap  B = S_j, \,\,\, \mathrm{dist}(A,B)\leq \frac{  c \sigma^2k_{\mathrm{dyad}}(\theta^*) \log (N)}{\kappa^3}  \right\} \right\vert  \\
		\leq  \displaystyle \frac{1}{\eta}\sum_{  B\in \Lambda^*,\,\,\mathrm{dist}(A,B)\leq c \sigma^2 \kappa^{-2} k_{\mathrm{dyad}}(\theta^*) \log (N)}\,\, \sum_{j \in I_B} \vert  R_j \vert\\
		= \displaystyle  \frac{1}{\eta}\sum_{ B\in \Lambda^*,\,\,\mathrm{dist}(A,B)\leq c \sigma^2 \kappa^{-2} k_{\mathrm{dyad}}(\theta^*) \log (N)}\,\, \vert  \hat{B}\vert\\
		\leq \displaystyle  \frac{1}{\eta}\sum_{   B\in \Lambda^*,\,\,\mathrm{dist}(A,B)\leq c \sigma^2 \kappa^{-2} k_{\mathrm{dyad}}(\theta^*) \log(N) }\,\, \left[ \vert B\vert + \vert  B \backslash \hat{B}\vert  \right]\\
		\lesssim\displaystyle   \underset{ B\in \Lambda^*,\,\,\mathrm{dist}(A,B)\leq c \sigma^2 \kappa^{-2} k_{\mathrm{dyad}}(\theta^*) \log(N)}{\max}\,   \frac{\vert  B\vert  }{\eta} \,    +\,       \underset{  B\in \Lambda^*,\,\,\mathrm{dist}(A,B)\leq c \sigma^2 \kappa^{-2} k_{\mathrm{dyad}}(\theta^*) \log(N)}{\max}\,   \frac{\vert  B \backslash \hat{B}\vert  }{\eta}   \\
		\lesssim \displaystyle      \underset{B\in \Lambda^*,\,\,\mathrm{dist}(A,B)\leq c \sigma^2 \kappa^{-2} k_{\mathrm{dyad}}(\theta^*) \log(N)  }{\max}\,   \frac{\vert  B\vert  }{\eta} \,    +\, \frac{\sigma ^2 \log (N) }{\eta\kappa^2}  \, k_{\mathrm{dyad}}(\theta^*)\\
		\lesssim    \displaystyle      \underset{ B\in \Lambda^*,\,\,\mathrm{dist}(A,B)\leq c \sigma^2 \kappa^{-2} k_{\mathrm{dyad}}(\theta^*) \log(N)}{\max}\,   \frac{\vert  B\vert  }{\eta}, 
	\end{array}
	\]
	where the third inequality follows from Assumption \ref{as6},   the fourth  from  (\ref{eqn:error4}), and the last from (\ref{eqn:snr4}).  We therefore have shown \eqref{eq-cor1-1}.  The claim \eqref{eq-cor1-2} is a straightforward consequence of \eqref{eq-cor1-1} by letting $\eta \asymp \vert A\vert \asymp \Delta$. 
\end{proof}

\begin{proof}[Proof of Proposition~\ref{prop-lb}]
	We are using Fano's method in this proof.  To be specific, we are to use the version of Lemma~3 in \cite{yu1997assouad}.   
	
	Without loss of generality, we assume that $\Delta^{1/d}$ is a positive integer.  For $q$ to be specified, we further assume that $(n - \Delta^{1/d})/q$ and $q$ are both positive integers.  We construct a collection of distributions, each of which is defined uniquely with a subset $S$ defined in \eqref{eq-lb-S}.  Therefore the collection of distributions can be specified by the collection of subsets
	\[
	\mathcal{S} = \left\{\prod_{p = 1}^d [k_pq, k_pq + \Delta^{1/d}], \quad (k_1, \ldots, k_d) \in [0, (n - \Delta^{1/d})/q]^d\right\}.
	\] 
	We assume that the parameters $\kappa, \sigma, \Delta$ in this collection of distributions ensure that this collection of distributions belong to the subset $\mathcal{P} \subset \mathcal{P}_N$,
	\begin{equation}\label{eq-lb-pf-p-def}
		\mathcal{P} = \left\{P^N_{\kappa, \Delta, \sigma}: \, \Delta^{2d} \leq N, \, \kappa^2 \Delta/\sigma^2 = \log(N) /6\right\}.
	\end{equation}

	To justify the conditions of Lemma~3 in \cite{yu1997assouad}, we first notice that for each $S \in \mathcal{S}$, $|S| = \Delta$.  Secondly, for any $S_1, S_2 \in \mathcal{S}$, $S_1 \neq S_2$, it holds that 
	\[
	|S_1 \triangle S_2| \geq 2\Delta^{\frac{d-1}{d}}q
	\]
	and
	\[
	\mathrm{KL}(P_{S_1}, P_{S_2}) \leq \Delta \kappa^2/\sigma^2.
	\]
	Lastly, we note that $|\mathcal{S}| = (n - \Delta^{1/d})^d/q^d$.
	Then Lemma~3 in \cite{yu1997assouad} shows that
	\[
	\inf_{\widehat{S}} \sup_{P \in \mathcal{P}_N} \mathbb{E}_P \left\{|\widehat{S} \triangle S|\right\} \geq \inf_{\widehat{S}} \sup_{P \in \mathcal{P}} \mathbb{E}_P \left\{|\widehat{S} \triangle S|\right\} \geq \Delta^{\frac{d-1}{d}}q \left(1 - \frac{\Delta \kappa^2/\sigma^2 + \log(2)}{\log\left\{(n - \Delta^{1/d})^d/q^d\right\}}\right).
	\]
	We now take $q = \Delta^{1/d}/2$, such that due to the conditions in \eqref{eq-lb-pf-p-def}, it holds that
	\[
	\Delta^{\frac{d-1}{d}} q = \Delta/2 = \frac{\sigma^2 \log(N)}{12 \kappa^2} 
	\] 
	and have that 
	\begin{align*}
		& \inf_{\widehat{S}} \sup_{P \in \mathcal{P}_N} \mathbb{E}_P \left\{|\widehat{S} \triangle S|\right\} \geq \frac{\sigma^2 \log(N)}{12 \kappa^2} \left(1 - \frac{\log(N)/3}{\log(N)/2}\right) \geq \frac{d\sigma^2 \log(n)}{36 \kappa^2},
	\end{align*}
	where the first inequality holds provided $6\log(2) \leq d \log(n)$	and the conditions specified in \eqref{eq-lb-pf-p-def}.
\end{proof}

\section{A naive two-step estimator}  \label{sec:two_step}

In \Cref{sec-two-sided}, we proposed and studied a two-step constrained estimator, which builds and improve upon the DCART estimator, leading to a two-sided consistency guarantee for recovering the support of the true partition.  The two-step estimator studied in \Cref{sec-two-sided} starts with a constrained DCART estimator and prunes its output by merging certain pairs of rectangles.  It is natural to ask about the performances of a naive two-step estimator, which just prunes the DCART estimator without constraining it to only output large enough rectangles.  In this section, we study thee performance of this simpler estimator, which turns out to be worse than the two-step estimator studied in \Cref{sec-two-sided}.  The proof of \Cref{thm2} is repeatedly used in the proofs of two of our main results,  \Cref{thm3} and Corollary~\ref{cor1}.

Instead of requiring \Cref{as5} as in \Cref{sec-two-sided}, we impose a stronger assumption below. 
\begin{assumption} \label{as4}
	If  $A,B \in  \Lambda^*$ with  $A \neq B$   and $\bar{\theta}^*_A = \bar{\theta}_B^*$, then 
	we have that 
	\[
	\mathrm{dist}(A,B) \, \geq  \, c\frac{k_{\mathrm{dyad}}(\theta^*)^2 \sigma^2\log(N)}{\kappa^2},
	\]
	for some large enough constant $c>0$.   Furthermore, we assume that 
	\begin{equation}\label{eq-snr-s1-supp}
		\kappa^2 \Delta \geq  ck_{\mathrm{dyad}}(\theta^*)^2 \sigma^2 \log(N).
	\end{equation}
\end{assumption}

We first detail the pruning step of the naive two-step estimator.  Let $\tilde{\theta}$ be the DCART estimator with tuning parameter $\lambda_1$, defined in \eqref{eqn:cart}.  Let $\{R_l\}_{l \in [k(\tilde{\theta})]}$  be a rectangular-partition of $L_{d,n}$ induced by $\tilde{\theta}$.  Let $\lambda_2, \eta, \gamma > 0$ be tuning parameters for the pruning stage.  For each $(i, j) \in [k(\tilde{\theta})] \times [k(\tilde{\theta})]$, let $z_{(i, j)} = 1$ if 
\[
\mathrm{dist}(R_i,R_j) \leq \gamma, \quad \min\{\vert  R_i \vert,  \vert   R_ j\vert  \} \geq \eta
\]
and
\[
\frac{1}{2}\left[    \sum_{l\in  R_i}   (Y_l -    \bar{Y}_{ R_i } )^2  +    \sum_{l\in  R_j}   (Y_l -    \bar{Y}_{ R_j } )^2 \right]  +  \lambda_2  \,>\,    \frac{1}{2} \sum_{l\in R_i\cup R_j  }   (Y_l -    \bar{Y}_{ R_i\cup R_j  } )^2;
\]
otherwise, let $z_{(i, )} = 0$.  With this notation, let $E = \{(i, j)\in [k(\tilde{\theta})] \times [k(\tilde{\theta})]:\, z_{(i,j)} = 1\}$ and let $\{\mathcal{C}_l\}_{l \in [\hat{L}]}$ be the collection of all the connected components of the undirected graph $G_{\mathrm{naive}} :=([k(\tilde{\theta})] \backslash\mathcal{I}, E)$, where $\mathcal{I} = \{i   \in  [k(\tilde{\theta})]:\,  \vert  R_i \vert  \leq \eta  \}$.  Then assign each element  $i \in \mathcal{I}$ at random to one  of the components $\{\mathcal{C}_l\}_{l \in [\widecheck{L}]}$ and denote the resulting collection as $\{\widecheck{\mathcal{C}}_l\}_{l \in [\widecheck{L}]}$.  Finally, define 
\begin{equation}\label{eq-tilde-lambda-supp}
	\widecheck{\Lambda}\, =\, \left\{  \cup_{j \in \widecheck{\mathcal{C}}_1} R_j,\ldots,\cup_{j \in \widecheck{\mathcal{C}}_{\widecheck{L} } } R_j \right\}.
\end{equation}

\begin{theorem} \label{thm2}
	Suppose Assumption~\ref{as4} holds and that the data satisfy \eqref{eq-model} and let $\widecheck{\Lambda}$ be the naive two-step estimator defined in \eqref{eq-tilde-lambda-supp}, with tuning parameters $\lambda_1 = C_1\sigma^2 \log(N)$, $\lambda_2 = C_2 k_{\mathrm{dyad}}(\theta^*) \sigma^2 \log (N)$, $\gamma = C_{\gamma} k_{\mathrm{dyad}}(\theta^*) \eta$ and 
	\begin{equation} \label{eqn:snr2}
		c_1  \frac{ k_{\mathrm{dyad}}(\theta^*)  \sigma^2 \log (N) }{\kappa^2} \leq  \eta \leq 	 \frac{\Delta }{c_2 k_{\mathrm{dyad}}(\theta^*)},
	\end{equation}
	where $C_1, C_2, C_{\gamma}, c_1, c_2 > 0$ are absolute constants.  Then, with probability at least  $1-N^{-c}$, it holds that 
	\[
	\vert \widecheck{\Lambda}\vert  \,=\,  \vert \Lambda^*\vert \quad \mbox{and} \quad d_{\mathrm{Haus}}(\widecheck{\Lambda}, \Lambda^*) \,\leq \,  C k_{\mathrm{dyad}}(\theta^*)\eta,
	\]
	where $c, C >0$ are absolute constants.   
	
	If in addition, it holds that $\eta = C_{\eta} \kappa^{-2}k_{\mathrm{dyad}}(\theta^*)  \sigma^2 \log (N)$, where $C_{\eta} > 0$ is an absolute constant, then, with proability at least $1-N^{-c}$,
	\[
	\vert \widecheck{\Lambda}\vert  \,=\,  \vert \Lambda^*\vert \quad \mbox{and} \quad d_{\mathrm{Haus}}(\widecheck{\Lambda}, \Lambda^*)  \,\leq \,  C\frac{ k_{\mathrm{dyad}}(\theta^*)^2  \sigma^2 \log (N)}{\kappa^2}.
	\]
\end{theorem}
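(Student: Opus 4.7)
The plan is to reduce the analysis of the naive two-step estimator to the one-sided consistency properties of the DCART partition established in Theorem~\ref{thm1}, and then analyze the merging criterion case by case. I would first condition on the high-probability event of Theorem~\ref{thm1}, from which three facts are used throughout: $k(\tilde{\theta}) \leq 2 k_{\mathrm{dyad}}(\theta^*) + C$; every cell $R_j$ admits a constant-value subset $S_j \subset R_j$ with $|R_j \setminus S_j| \lesssim \kappa^{-2}\sigma^2 k_{\mathrm{dyad}}(\theta^*) \log(N)$; and the aggregate bound $\sum_j |R_j \setminus S_j| \lesssim \kappa^{-2}\sigma^2 k_{\mathrm{dyad}}(\theta^*) \log(N)$ holds. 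By the lower bound on $\eta$ in \eqref{eqn:snr2}, the right-hand side is at most $C \eta$.

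The next step is to show that every large rectangle $R_j$ with $|R_j| \geq \eta$ has a \emph{unique host} $A_j \in \Lambda^*$ with $S_j \subset A_j$. If $S_j$ met two different elements $A, B \in \Lambda^*$ sharing a common mean, then any lattice-path inside $R_j$ from $A \cap R_j$ to $B \cap R_j$ must traverse at least $\mathrm{dist}(A,B)$ points of $R_j \setminus S_j$. Combined with the bound from the previous step, Assumption~\ref{as4}, which strengthens the separation between same-mean components from $k_{\mathrm{dyad}}$ to $k_{\mathrm{dyad}}^2$, forces a contradiction for $c$ large enough. In particular $|R_j \setminus A_j| \leq |R_j \setminus S_j|$.

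I would then analyze the pruning condition~\eqref{eqn:est_cond2}, which is equivalent to $\tfrac{|R_i||R_j|}{|R_i|+|R_j|}(\bar Y_{R_i} - \bar Y_{R_j})^2 \leq 2\lambda_2$, using Gaussian concentration for $\bar Y_R$ uniformly over rectangles of size at least $\eta$. There are three cases. (i) $A_i = A_j$: the left-hand side is purely noise-driven and bounded by $C \sigma^2 \log N$, falling below $2\lambda_2 = 2 C_2 k_{\mathrm{dyad}}\sigma^2\log N$, so the pair is merged. (ii) $A_i \neq A_j$ with distinct means: $|\bar Y_{R_i}-\bar Y_{R_j}| \gtrsim \kappa$ forces the left-hand side to exceed $\eta \kappa^2/8$, which dominates $\lambda_2$ by \eqref{eqn:snr2}, so the pair is not merged. (iii) $A_i \neq A_j$ with a shared mean: here I show $\mathrm{dist}(R_i,R_j) > \gamma$, using that the ``reach'' of $R_i$ beyond $A_i$ is at most $|R_i \setminus A_i| \leq |R_i \setminus S_i|$, so these pairs are never tested. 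Consequently the connected components of $G_{\mathrm{naive}}$ are in bijection with the hosts appearing among the large rectangles, and since the upper bound on $\eta$ in~\eqref{eqn:snr2} gives $\Delta \geq c_2 k_{\mathrm{dyad}} \eta \geq |\mathcal{I}| \cdot \eta$, every element of $\Lambda^*$ is represented by at least one large rectangle. Hence $|\widecheck{\Lambda}| = |\Lambda^*|$.

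For the Hausdorff bound, fix $A \in \Lambda^*$ with corresponding $\hat A \in \widecheck{\Lambda}$. The discrepancy $|\hat A \triangle A|$ decomposes into mis-classified ``shoulders'' of the correctly hosted rectangles, controlled by $\sum_{j:\,A_j=A}|R_j \setminus A| + \sum_{j:\, A_j \neq A, R_j \cap A \neq \emptyset}|R_j \cap A| \leq \sum_j |R_j \setminus S_j| = O(\eta)$, plus the contribution from the small cells in $\mathcal{I}$ that are randomly assigned, bounded by $|\mathcal{I}| \cdot \eta \leq k(\tilde{\theta}) \cdot \eta = O(k_{\mathrm{dyad}} \eta)$. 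The latter dominates and yields $d_{\mathrm{Haus}}(\widecheck{\Lambda}, \Lambda^*) \leq C k_{\mathrm{dyad}}(\theta^*) \eta$; plugging in $\eta \asymp k_{\mathrm{dyad}} \sigma^2 \log(N)/\kappa^2$ gives the second claim. The main obstacle I anticipate is case~(iii) above: cleanly upper-bounding the physical reach of a rectangle $R_j$ beyond its host $A_j$ in terms of $|R_j \setminus A_j|$ (up to lattice-geometric constants) and combining this with Assumption~\ref{as4} to certify $\mathrm{dist}(R_i,R_j) > \gamma$. This is precisely where the quadratic dependence on $k_{\mathrm{dyad}}(\theta^*)$ in Assumption~\ref{as4}, rather than a linear one as in Assumption~\ref{as5}, becomes essential, and it explains why the naive estimator incurs the inferior rate $k_{\mathrm{dyad}}(\theta^*)^2 \sigma^2 \log(N)/\kappa^2$.
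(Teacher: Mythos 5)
Your proposal follows essentially the same route as the paper's proof: conditioning on the one-sided consistency events of Theorem~\ref{thm1}, using Assumption~\ref{as4} together with a lattice-path argument to assign a unique host in $\Lambda^*$ to each large rectangle, a case analysis of the merge test (your cases (i) and (ii) are the paper's Step~4, and your case (iii) is the distance-based exclusion of same-mean, distinct components that the paper leaves implicit), and the shoulders-plus-small-cells decomposition for the Hausdorff bound. The only step you gloss over is showing that all large rectangles sharing a host $A$ form a \emph{connected} subgraph of $G_{\mathrm{naive}}$ --- the paper argues that otherwise $A$, being connected in $L_{d,n}$, would need a bridge of at least $\gamma = C_\gamma k_{\mathrm{dyad}}(\theta^*)\eta$ points outside $\cup_{i \in I_A} R_i$, contradicting the bound $\sum_{j \in \mathcal{I}}|R_j| + \sum_j |R_j\setminus S_j| \lesssim k_{\mathrm{dyad}}(\theta^*)\eta$ --- but this fits naturally into your framework.
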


\begin{proof}
	The proof is conducted in the events $\cap_{i \in [5]} \Omega_i \cap \mathcal{A}_1 \cap \mathcal{A}_2$, where $\Omega_1$ is defined in \eqref{eq-omega-1-def}, $\Omega_2$ is defined in \eqref{omega2}, $\Omega_3$ is defined in \eqref{omega3}, $\Omega_4$ is defined in \eqref{eq-def-omega-4}, $\Omega_5$ is defined in \eqref{eqn:cond3}, $\mathcal{A}_1$ is defined in \eqref{eq-a1-def} and $\mathcal{A}_2$ is defined in \eqref{eq-a2-def}.  For any $A \in \Lambda^*$, define $I_A = \{j \in [k(\tilde{\theta})] \setminus \mathcal{I}: R_j \cap A = S_j\}$. 
	
	\textbf{Step 1.}  Due to \eqref{eq-snr-s1-supp} and \eqref{eqn:snr2}, we have that $\mathcal{I} \neq \emptyset$, which implies that $I_A \neq [k(\tilde{\theta})]$ and there exists $i \notin I_A$.  Let $i \notin I_A$.  
	
	\textbf{Step 1.1.}  First, we claim that it is impossible that	$R_i \cap A \subset S_i$ and $R_i \cap A \neq S_i$, with   $\vert R_i \vert  \geq \eta $.  Arguing by contradiction,  assume that there exists $B \in \Lambda^* \backslash \{A\}$  such that  $R_i\cap  B \subset S_i$ and   $\bar{\theta}^*_A =  \bar{\theta}^*_B$. Set
	\[
	\mathcal{T} \,=\,\left\{ B  \in  \Lambda^* \backslash \{A\} \, :\,   R_i\cap B \subset S_i,\,\,\,  \bar{\theta}^*_A =  \bar{\theta}^*_B\right\},
	\]
	and let  $p \in  R_i \cap A$, $q \in \underset{B \in \mathcal{T}}{\bigcup} R_i\cap B$ be such that 
	\[
	\| p-q\|\,\,=\,\,\underset{\tilde{p} \in  A ,  \,\,\,  \tilde{q} \in \underset{B \in \mathcal{T}}{\bigcup} R_i \cap 
		B }{\min}\,	\| \tilde{p}-\tilde{q}\|.
	\]
	Then from Assumption \ref{as4} we have that
	\begin{equation}
		\label{eqn:aux1}
		\| p-q\|	\geq  \, c\frac{k_{\mathrm{dyad}}(\theta^*)^2 \sigma^2\log(N) }{\kappa^2}.
	\end{equation}
	Let   $r^1,\ldots,r^d \in L_{d,n}$  such that for  $a\in\{1,\ldots,d\}$,
	\[
	r^a_b  \,=\, \begin{cases}
		p_b    & \text{if}   \,\,\,b \neq a,\\
		q_b & \text{if}   \,\,\,b = a.\\
	\end{cases}
	\]
	By construction we have that $r^1,\ldots,r^d \in  R_i$.  Furthermore, from (\ref{eqn:aux1}) there exists a $a_0 \in \{1,\ldots,d\}$ such that  
	\begin{equation}
		\label{eqn:aux2}
		\| p-r^{a_0}\|	\geq  \, c\frac{k_{\mathrm{dyad}}(\theta^*)^2 \sigma^2\log (N)}{d^{1/2} \kappa^2}.
	\end{equation}
	By the definitions of $p$ and $q$, it holds that
	\[
	\{ \lambda p + (1-\lambda)r^{a_{0}} \,:\,  \lambda \in (0,1)    \} \cap L_{d,n}  \subset  R_i \backslash  S_i.
	\]
	It then follows from  (\ref{eqn:aux2})  that
	\[
	\vert  R_i \backslash  S_i  \vert  \geq   \left\vert \{ \lambda p + (1-\lambda)r^{a_{0}} \,:\,  \lambda \in (0,1)    \} \cap L_{d,n}  \right\vert \geq    c\frac{k_{\mathrm{dyad}}(\theta^*)^2 \sigma^2\log (N)}{\kappa^2},
	\]
	which contradicts  the definition of $\mathcal{A}_2$.  
	
	\textbf{Step 1.2.} If $R_i \cap A = S_i$, then $\vert R_i  \vert \leq \eta$.
	
	\textbf{Step 1.3.} If $R_i \cap A \neq S_i$ and $\vert R_i \vert \geq \eta$, then  by \textbf{Step 1.1}, it holds that $R_i \cap  A \subset  R_i \backslash S_i$.  Hence,  since $I_A$ induces a connected sub-graph of $G_{\mathrm{naive}}$, a fact proved below in \textbf{Step 3.}, we obtain that  $R_i \cap A = \emptyset$.
	
	\textbf{Step 2.}  We claim that $I_A \neq \emptyset$.  Proceeding again by contradiction, we assume that for any $j \in [k(\tilde{\theta})]$  with  $R_j  \cap  A \neq \emptyset$, it is either the case that  $\vert  R_j \vert \leq \eta$ or the case that  $R_j \cap A \neq  S_j$.  It follows from \textbf{Step 1.1.}~that it is impossible to have $R_j \cap A \subset S_j$,  $R_j \cap A \neq S_j$ and $\vert R_j\vert \geq \eta$.  Thus, we obtain that 
	\begin{equation} \label{eqn:ineq}
		\vert  A \vert \leq \sum_{  j \,:\, \vert R_j\vert  \leq    \eta    } \vert R_j \vert  \,+\,  \sum_{j=1}^{k( \tilde{\theta} )} \vert  R_j    \backslash S_j \vert \leq  k_{\mathrm{dyad}}(\theta^*)\eta  \,+\,   \frac{k_{\mathrm{dyad}}(\theta^*) \sigma^2 \log (N)}{\kappa^2},
	\end{equation}
	where the second  inequality  holds  due to the definitions of $\Omega_2$ and $\mathcal{A}_2$.  Since   $\vert A \vert  \geq \Delta$, (\ref{eqn:ineq}) along with we constraint (\ref{eqn:snr2})  lead to a contradiction.		
	
	\textbf{Step 3.}  We then claim that $I_A$ induces a connected sub-graph of $G_{\mathrm{naive}}$.  To see this, suppose that $\{J_u\}_{u \in [l]}$ are the connected components of $I_A$ with the edges induced by $E$ and with $l>1$.  Then, if $i \in  J_a$ and $j \in J_b$ for $a, b\in [l]$, $a \neq b$, then it must be the case that $\mathrm{dist}(R_i,R_j) \geq \gamma$. Hence, $\mathrm{dist}(R_i\cap A,R_j \cap A) \geq \gamma$ for all $i \in  J_a$, $j \in J_b$, $a, b \in [l]$, $a \neq b$.  Since $A$ is connected in $L_{d,n}$   we obtain that $\vert  A  \backslash\cup_{a=1}^l\cup_{i \in J_a} (R_i \cap A) \vert \,\geq \,  \gamma$.  However, 
	\begin{equation}
		\label{eqn:ineq2}
		\begin{array}{lll}
			\vert  A  \backslash\cup_{a=1}^l\cup_{i \in J_a}R_i \cap A \vert   & \leq&\displaystyle \sum_{  j \,:\, \vert R_j\vert  \leq    \eta    } \vert R_j \vert  \,+\,  \sum_{j=1}^{k( \tilde{\theta} )} \vert  R_j    \backslash S_j \vert\\
			& \lesssim&  \displaystyle  k_{\mathrm{dyad}}(\theta^*)\eta \,+\,   \frac{k_{\mathrm{dyad}}(\theta^*) \sigma^2 \log (N)}{\kappa^2} < \gamma,				
		\end{array}    
	\end{equation}
	where the second  inequality  holds due to the definitions of $\Omega_2$ and $\mathcal{A}_1$.			Thus, we have arrived at a contradiction. For any $i \in [k(\tilde{\theta})]$, let $\widecheck{A} \in \widecheck{\Lambda}$ be $i \in \widecheck{A}$.  We then have $R_i \subset \widehat{A}$.
	
	\textbf{Step 4.}  For any $(i, j) \in [k(\tilde{\theta})] \times [k(\tilde{\theta})]$, we discuss the following two cases.
	
	\textbf{Case 1.} If $i, j \in I_A$, then  (\ref{eqn:cond2})  holds   by  the definition of $\mathcal{A}_1$, and (\ref{eqn:cond3}) holds  by the definition of $\Omega_5$.  Hence, if  $\mathrm{dist}(R_i,R_j)\leq \gamma $ then $(i, j) \in E$.				
	
	\textbf{Case 2.} If  $i \in I_A$  and  $j \in I_B$ for $B \in  \Lambda^* \backslash\{A\}$ with  $\bar{\theta}^*_A \neq  \bar{\theta}^*_B$, then  by the definition of $\Omega_4$, we have that
	\[
	\begin{array}{lll}
		\displaystyle 	\frac{\vert R_i \vert   \,\vert R_j \vert }{  \vert   R_i\vert  +  \vert R_j\vert   }\left(\bar{Y}_{R_i} -  \bar{Y}_{R_j} \right)^2 &\geq &\displaystyle   \frac{\eta}{4}\kappa^2 -  C_{\gamma} k_{\mathrm{dyad}}(\theta^*) \sigma^2 \log (N)\geq C \lambda_2,
	\end{array}
	\]
	provided that (\ref{eqn:snr2}) holds  for large enough $c_1$  and $\lambda_2 =  C_2 k_{\mathrm{dyad}}(\theta^*)\sigma^2\log(N)$ for an appropriate constant. It follows that $\{i,j\}\notin E$.
	
	\textbf{Step 5.}  Combining all of the above we obtain that  $\vert \Lambda^*\vert   =  \vert  \tilde{\Lambda} \vert$.  Let $\widecheck{A} \in \widecheck{\Lambda}$ be 
	\[
	\widecheck{A} \in \argmin_{B \in \widecheck{\Lambda}} |B \triangle A|.
	\]
	We have that
	\[
	\begin{array}{lll}
		\displaystyle   \vert  \widecheck{A} \backslash A\vert   &\leq  &   \displaystyle    \sum_{  j \,:\, \vert R_j \vert  \leq    \eta  } \vert R_j \vert   \,+\,\sum_{  j \in I_A } \vert R_j \backslash  S_j  \vert \leq   k(\tilde{\theta}) \eta  \,+\,\sum_{  j =1}^{ k(\tilde{\theta})  } \vert R_j \backslash  S_j  \vert \\
		& \leq &  \displaystyle k_{\mathrm{dyad}}(\theta^*)\eta   \,+\,   C_3\frac{k_{\mathrm{dyad}}(\theta^*) \sigma^2 \log(N)}{\kappa^2},
	\end{array}
	\]
	where the last  inequality  holds  by   the definitions of $\Omega_2$ and $\mathcal{A}_1$; and 
	\[
	\begin{array}{lll}
		\displaystyle   \vert  A \backslash \widecheck{A}\vert   &\leq  &   \displaystyle    \sum_{  j \,:\, \vert R_j \vert  \leq    \eta    } \vert R_j \vert   \,+\,\sum_{  j \,:\,  R_j \cap A \notin \{\emptyset,  S_j  \}  } \vert R_j \backslash  S_j  \vert \leq k(\tilde{\theta}) \eta  \,+\,\sum_{  j =1}^{ k(\tilde{\theta})  } \vert R_j \backslash  S_j  \vert \\
		& \leq &  \displaystyle k_{\mathrm{dyad}}(\theta^*)\eta  \,+\,  C_3 \frac{k_{\mathrm{dyad}}(\theta^*) \sigma^2 \log(N)}{\kappa^2}.
	\end{array}
	\]	
	We therefore conclude the proof.
\end{proof}

\section{Auxiliary results}

\noindent {\bf Noise assumption.} In the paper we make the assumption of Gaussian  i.i.d.~errors in \eqref{eq-model}, just like in \cite{chatterjee2019adaptive}. This is a technical condition required to justify the use of Gaussian concentration inequality for Lipschitz functions. It may be relaxed by assuming errors with, e.g., log-concave density.
Furthermore, it is possible to consider sub-Gaussian errors but this would involve extra logarithmic  factors in the assumptions and upper bound. 

Additional lemmas are collected here.  Lemmas~\ref{lem1} and \ref{lem2} follow exactly from \cite{wang2020univariate}, so we omit their proofs.

\begin{lemma}[Lemma 5 in \cite{wang2020univariate}]\label{lem1}
	Let  $I ,  J \subset   L_{d,n}$  with $I \cap J =\emptyset$ and  let  $Y \in \mathbb{R}^{L_{d,n}}$. Then 
	\[
	\displaystyle \sum_{i\in I \cup J } (Y_i -\bar{Y}_{ I \cup J  })^2   \,=\,   \sum_{i\in I } (Y_i -\bar{Y}_{ I  })^2 \,+\, \sum_{i\in  J } (Y_i -\bar{Y}_{ J  })^2  +     \frac{ \vert I \vert \vert J\vert  }{\vert I \vert +\vert J\vert}\left(  \bar{Y}_{ I  }-\bar{Y}_{ J }\right)^2.
	\] 
\end{lemma}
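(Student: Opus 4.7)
The plan is to prove this classical ANOVA-style variance decomposition by direct algebraic manipulation, adding and subtracting the group means inside each square and exploiting the orthogonality of residuals with respect to their own group mean.

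First I would record the identity
\[
\bar{Y}_{I\cup J} \,=\, \frac{|I|\,\bar{Y}_I + |J|\,\bar{Y}_J}{|I|+|J|},
\]
which follows immediately from the definition of the arithmetic mean over the disjoint union $I \cup J$. From this I would derive the two convenient identities
\[
\bar{Y}_I - \bar{Y}_{I\cup J} \,=\, \frac{|J|}{|I|+|J|}(\bar{Y}_I - \bar{Y}_J), \qquad \bar{Y}_J - \bar{Y}_{I\cup J} \,=\, \frac{|I|}{|I|+|J|}(\bar{Y}_J - \bar{Y}_I),
\]
which are what ultimately produce the factor $|I||J|/(|I|+|J|)$ in the cross term.

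Next I would split the left-hand sum as $\sum_{i\in I\cup J} = \sum_{i \in I} + \sum_{i \in J}$ (using $I \cap J = \emptyset$), and in each piece write
\[
Y_i - \bar{Y}_{I\cup J} \,=\, (Y_i - \bar{Y}_I) + (\bar{Y}_I - \bar{Y}_{I\cup J})
\]
on $I$, and analogously on $J$. Expanding each square gives a pure within-group term $(Y_i - \bar{Y}_I)^2$, a constant deviation term $(\bar{Y}_I - \bar{Y}_{I\cup J})^2$, and a cross term. The cross term vanishes after summing because $\sum_{i \in I}(Y_i - \bar{Y}_I) = 0$ by the very definition of $\bar{Y}_I$, and similarly on $J$.

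Finally, I would collect the surviving between-group contributions: summing the constant deviations yields
\[
|I|(\bar{Y}_I - \bar{Y}_{I\cup J})^2 + |J|(\bar{Y}_J - \bar{Y}_{I\cup J})^2 \,=\, \left(\frac{|I||J|^2 + |J||I|^2}{(|I|+|J|)^2}\right)(\bar{Y}_I - \bar{Y}_J)^2,
\]
and the bracketed coefficient simplifies to $|I||J|/(|I|+|J|)$, yielding precisely the claimed identity. There is essentially no obstacle here; the only care needed is in bookkeeping the vanishing of the cross terms, which is why I would treat the $I$-sum and $J$-sum separately before recombining.
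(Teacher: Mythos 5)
Your proof is correct and complete; the identities for $\bar{Y}_I - \bar{Y}_{I\cup J}$ and $\bar{Y}_J - \bar{Y}_{I\cup J}$, the vanishing of the cross terms, and the simplification of the between-group coefficient to $|I||J|/(|I|+|J|)$ all check out. The paper itself omits the proof entirely, citing Lemma 5 of \cite{wang2020univariate}, and your argument is the standard ANOVA-style decomposition one would find there, so there is nothing further to compare.
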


\begin{lemma}[Lemma 6 in \cite{wang2020univariate}] \label{lem2}
	Let  $\mathcal{I}$  be the set of rectangles that are subsets of  $L_{d,n}$. Then for $y \in \mathbb{R}^{L_{d,n}}$ defined in \eqref{eq-model},  the event  
	\begin{equation}
		\label{eqn:beta}
		\mathcal{B} \,=\, \left\{ \displaystyle   \underset{I,J    \in \mathcal{I}, \,\,  I \cap J   = \emptyset }{\max}   \,\,\sqrt{   \frac{  \vert I \vert \,  \vert J\vert   }{   \vert I \vert  +  \vert J \vert  } }\left\vert    \bar{Y}_I   - \bar{\theta}^*_I   -   \bar{Y}_J   + \bar{\theta}^*_J  \right\vert    \,\leq  \, C_{\mathcal{B}} \sigma \sqrt{\log (N)}  \right\}
	\end{equation}
	holds with probability at least $1 -N^{-c_{\mathcal{B}}}$, where $C_{\mathcal{B}}$ is a large enough constant  and  $c_{\mathcal{B}}$ depends on~$C_{\mathcal{B}}$.
\end{lemma}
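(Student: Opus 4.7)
The plan is to reduce the supremum to a Gaussian maximum over a polynomially-large index set and then apply a standard union bound with a Gaussian tail estimate.

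First I would fix a pair of disjoint rectangles $I,J\in\mathcal{I}$ and compute the distribution of the inner quantity. Since $\bar Y_I-\bar\theta^*_I=|I|^{-1}\sum_{i\in I}\epsilon_i$ and the analogous identity holds for $J$, and because $I$ and $J$ are disjoint so the noise on $I$ is independent of the noise on $J$, the random variable
\[
Z_{I,J}\;:=\;\sqrt{\frac{|I|\,|J|}{|I|+|J|}}\,\bigl(\bar Y_I-\bar\theta^*_I-\bar Y_J+\bar\theta^*_J\bigr)
\]
is a centered Gaussian with variance
\[
\frac{|I|\,|J|}{|I|+|J|}\left(\frac{\sigma^2}{|I|}+\frac{\sigma^2}{|J|}\right)\;=\;\sigma^2.
\]
Thus each $Z_{I,J}$ is exactly $\mathcal N(0,\sigma^2)$, and the standard Gaussian tail bound gives $\mathbb P(|Z_{I,J}|>t)\le 2\exp\{-t^2/(2\sigma^2)\}$ for every $t>0$.

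Next I would apply a union bound. A rectangle in $L_{d,n}$ is specified by $2d$ integer endpoints, so $|\mathcal{I}|\le n^{2d}=N^2$, and the number of ordered disjoint pairs is at most $|\mathcal I|^2\le N^4$. Choosing $t=C_{\mathcal B}\sigma\sqrt{\log N}$ with $C_{\mathcal B}>0$ large enough that $C_{\mathcal B}^2/2$ exceeds $4+c_{\mathcal B}$, the union bound yields
\[
\mathbb P\!\left(\max_{I\cap J=\emptyset}|Z_{I,J}|>C_{\mathcal B}\sigma\sqrt{\log N}\right)\;\le\;2N^{4}\exp\!\left\{-\tfrac{C_{\mathcal B}^2}{2}\log N\right\}\;\le\;N^{-c_{\mathcal B}},
\]
which is exactly the claimed bound. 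No part of this is really the main obstacle: the only subtle point is keeping track of the counting of rectangles and verifying that the disjointness of $I$ and $J$ is what makes the variance collapse to $\sigma^2$ after the $\sqrt{|I||J|/(|I|+|J|)}$ rescaling; once this algebraic identity is in place, the argument is a textbook maximal inequality for a polynomially-sized family of $\mathcal N(0,\sigma^2)$ variables.
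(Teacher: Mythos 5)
Your proof is correct and is the standard argument: each rescaled difference is exactly $\mathcal N(0,\sigma^2)$ by the disjointness/variance computation, and a union bound over the at most $N^4$ ordered pairs of rectangles finishes it. The paper itself omits the proof and simply cites Lemma~6 of \cite{wang2020univariate}, whose argument is the same maximal-inequality reduction you describe, so there is nothing to reconcile.
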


\begin{lemma}
	\label{lem7}
	Let  $R \subset  L_{d,n}$  be a rectangle and denote by $\mathcal{P}_{\mathrm{dyadic,d,n}}(R)$ the set of all dyadic  partitions of  $R$. Define  $\beta_R \in \mathbb{R}^R$ as $\beta_R	 = \widetilde{\Pi}_R(y)$ where
	\begin{equation}
		\label{eqn:cartR}
		\widetilde{\Pi}_R  \,\in \,\underset{  \Pi    \in \mathcal{P}_{\mathrm{dyadic, d,n}}(R)  }{\arg \min}\left\{\frac{1}{2} \|   y_R-  O_{S(\Pi)}(y_R) \|^2   +  \lambda  \vert \Pi\vert    \right\}.
	\end{equation}  
	Then there exist positive constants   $c_1$ and $c_2$ that depend on $d$ such that  if $\lambda = C \sigma^2 \log (N)$ for a large enough constant $C>0$ it follows that the event 
	\begin{equation}\label{eq-omega-1-def}
		\Omega_1 \,=\,\left\{ \underset{R \subset  L_{d,n},   \,\,R   \,\,\text{rectangle}   }{\max}\, \{ \|    \beta_R     -   \theta_R^*\|^2    -   4\lambda k_{\mathrm{dyad}}(\theta_R^*)\}    \,\leq\, c_1\sigma^2\log (N)  \right\}
	\end{equation}
	holds 
	with probability at least  $1-N^{-c_2}$.
\end{lemma}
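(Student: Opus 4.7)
The plan is to prove the rectangle-level oracle inequality $\|\beta_R - \theta^*_R\|^2 \le 4\lambda k_{\mathrm{dyad}}(\theta^*_R) + c_1\sigma^2\log(N)$ for each fixed $R \subset L_{d,n}$ on a single high-probability event, then take a union bound over the at most $N^2$ rectangles in $L_{d,n}$. This mirrors the Donoho--Chatterjee oracle inequality for DCART, applied locally on each sub-rectangle of $L_{d,n}$.

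For a fixed $R$, let $\Pi^*_R$ be a dyadic partition of $R$ realizing $k := k_{\mathrm{dyad}}(\theta^*_R)$, so that $\theta^*_R \in S(\Pi^*_R)$ and $O_{S(\Pi^*_R)}(\theta^*_R) = \theta^*_R$. Writing $y_R = \theta^*_R + \epsilon_R$ and comparing the objective at $\beta_R$ to that at $O_{S(\Pi^*_R)}(y_R)$ via the optimality in \eqref{eqn:cartR}, expanding the squares and cancelling $\tfrac{1}{2}\|\epsilon_R\|^2$ terms yields the basic inequality
\[
\tfrac{1}{2}\|\beta_R - \theta^*_R\|^2 + \lambda|\widetilde{\Pi}_R| \;\le\; \langle \epsilon_R,\, \beta_R - \theta^*_R\rangle + \lambda k.
\]
Since $\beta_R \in S(\widetilde{\Pi}_R)$ and $\theta^*_R \in S(\Pi^*_R)$, the difference $\beta_R - \theta^*_R$ lies in $S(\Pi)$ for some dyadic refinement $\Pi$ of both $\widetilde{\Pi}_R$ and $\Pi^*_R$ with $|\Pi| \le |\widetilde{\Pi}_R| + k$, so Cauchy--Schwarz gives $\langle \epsilon_R, \beta_R - \theta^*_R\rangle \le \|O_{S(\Pi)}(\epsilon_R)\|\cdot\|\beta_R - \theta^*_R\|$.

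The essential probabilistic input is the uniform concentration bound
\[
\|O_{S(\Pi)}(\epsilon_R)\|^2 \;\le\; c_3\sigma^2\bigl(|\Pi| + \log N\bigr) \quad \text{for all rectangles } R \subset L_{d,n} \text{ and all dyadic partitions } \Pi \text{ of } R,
\]
holding with probability at least $1 - N^{-c_2}$. This follows by combining $\chi^2$-tail concentration for $\sigma^{-2}\|O_{S(\Pi)}(\epsilon_R)\|^2 \sim \chi^2_{|\Pi|}$ at fixed $\Pi$ with a union bound, using that the number of dyadic partitions of a given rectangle with at most $k$ leaves grows only as $e^{Ck}$ (each corresponds to a binary tree with $d$ possible split directions per internal node), and that there are at most $N^2$ rectangles in $L_{d,n}$, so the log-cardinality to be absorbed is $O(|\Pi| + \log N)$.

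Plugging the concentration bound into the basic inequality via $2ab \le \tfrac{1}{2}a^2 + 2b^2$ with $a = \|\beta_R - \theta^*_R\|$ and $b = \|O_{S(\Pi)}(\epsilon_R)\|$, using $|\Pi| \le |\widetilde{\Pi}_R| + k$, and choosing $C$ in $\lambda = C\sigma^2\log N$ large enough that the $c_3\sigma^2|\widetilde{\Pi}_R|$ contribution is absorbed into the $-\lambda|\widetilde{\Pi}_R|$ term, yields exactly
\[
\|\beta_R - \theta^*_R\|^2 \;\le\; 4\lambda k_{\mathrm{dyad}}(\theta^*_R) + c_1\sigma^2\log N,
\]
uniformly in $R$. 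The principal obstacle is the uniform noise bound: one must carefully count the dyadic partitions of every sub-rectangle of $L_{d,n}$ (not merely of $L_{d,n}$ itself, as in the standard DCART oracle argument) so that the log-cardinality still grows linearly in $|\Pi|$, which is what permits the Gaussian tail to dominate it and leave only an additive $\sigma^2\log N$ remainder; the remaining steps are routine algebraic manipulation.
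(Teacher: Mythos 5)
Your argument is correct and follows essentially the same route as the paper's: the same basic inequality obtained by comparing the penalized objective at $\widetilde{\Pi}_R$ with that at an optimal dyadic partition for $\theta^*_R$, followed by $\chi^2$-type concentration of the projected noise and a union bound over the partition class and over the at most $N^2$ sub-rectangles (the paper delegates the concentration step to Lemma 9.1 of \cite{chatterjee2019adaptive} and uses the cruder count $|R|^{2k}$ of $k$-dimensional partition subspaces, which your tighter $e^{Ck}$ count of dyadic trees only improves). The one small nit is that the weighting $2ab\le \tfrac12 a^2+2b^2$ together with $\|O_{S(\Pi)}(\epsilon_R)\|^2\le c_3\sigma^2(|\Pi|+\log N)$ leaves an extra $O(\sigma^2 k)$ term riding on the leading $4\lambda k_{\mathrm{dyad}}(\theta^*_R)$, so recovering the constant $4$ exactly requires a slightly different split of the cross term; since every downstream use of $\Omega_1$ is insensitive to this constant, the point is immaterial.
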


\begin{proof}
	First, proceeding as in the proof of Theorem 8.1 in \cite{chatterjee2019adaptive}, we obtain that 
	\begin{align} \label{eqn:basic}
		& \|\beta_R - \theta_R^*\|^2 \leq 2 \lambda k_{\mathrm{dyad}}(\theta_R^*)   + 2  (y_R-\theta^*_R)^{\top}(  \beta_R  -\theta^*_R  )-  2\lambda k_{\mathrm{dyad}}(\beta_R)  \nonumber \\ 
		\leq & 2 \lambda k_{\mathrm{dyad}}(\theta_R^*)  +   \frac{1}{2} \|\beta_R - \theta_R^*\|^2    +   2 \left \{(y_R-\theta^*_R)^{\top}\frac{(\beta_R  -\theta^*_R)}{\|\beta_R  -\theta^*_R  \|}\right\}^2-  2\lambda k_{\mathrm{dyad}}(\beta_R).  
	\end{align}
	Next, we denote by  $\mathcal{S}_R$ the collection of linear subspaces of $\mathbb{R}^R$ such that every $S \in \mathcal{S}_R$ is a linear subspace of $\mathbb{R}^R$ such that there is a partition of $R$ and $S$ consists of piecewise  constant  signals over this partition of $R$. Then
	\begin{align} \label{eqn:sec}
		& \frac{1}{2}	\|\beta_R - \theta_R^*\|^2 - 2 \lambda k_{\mathrm{dyad}}(\theta_R^*) \nonumber \\ 
		\leq & \underset{ k \in [|R|]  }{\max}\,\underset{ S \in \mathcal{S}_R, \,\text{Dim}(S) = k }{\sup}\,\,\,\underset{v \in S, \, v\neq \theta^*_R }{\sup}\,\left\{   2  \left\{(y_R-\theta^*_R)^{\top}\frac{(  v  -\theta^*_R  )}{\|  v  -\theta^*_R  \|}\right\}^2-   2\lambda k_{\mathrm{dyad}} \right\}.
	\end{align}
	However, from Lemma 9.1 in \cite{chatterjee2019adaptive},  for any  $c_1>1$, $S \in \mathcal{S}_R$ with $\text{dim}(S) = k \in [|R|]$, we have that 
	\begin{align*}
		& \mathbb{P}\left(\underset{v \in S, v\neq \theta^*_R}{\sup}\,\left\{   2  \left\{(y_R-\theta^*_R)^{\top}\frac{(  v  -\theta^*_R  )}{\|  v  -\theta^*_R  \|}\right\}^2-   2\lambda k \right\} \,\geq c_1 \sigma^2 \log (N)\right)\\
		\leq & \mathbb{P}\left(\underset{v \in S, v \neq \theta^*_R}{\sup}\,  2  \left\{(y_R-\theta^*_R)^{\top}\frac{(  v  -\theta^*_R  )}{\|  v  -\theta^*_R  \|}\right\}^2    \,\geq c_1 \sigma^2 \log (N)  +     2\lambda   k \right) \\
		\leq & 2\exp\left(  -    \frac{    c_1/2\log (N)    +     (2\lambda/\sigma^2 -2)  k   -4}{8}   \right).
	\end{align*}
	Since  $\vert \{ S\in \mathcal{S}_R\,:\,   \,\text{Dim}(S) = k \}\vert  \leq     \vert  R\vert^{2k}$, it follows by a union bound argument that for some  $c_2>0$, 
	\begin{align} \label{eqn:high_prob}
		& \mathbb{P}\left(\underset{ S \in \mathcal{S}_R, \,\text{Dim}(S) = k }{\sup}\,\underset{v \in S, v \neq \theta^*_R}{\sup}\,\left\{   2\left\{(y_R-\theta^*_R)^{\top}\frac{(  v  -\theta^*_R  )}{\|  v  -\theta^*_R  \|}\right\}^2-   2\lambda k \right\} \,\geq c_1 \sigma^2 \log N\right) \nonumber \\
		\leq & \exp\left(   -c_2\log (N) \right),
	\end{align}
	provided that $\lambda =  C\sigma^2 \log (N)$ with a sufficiently large $C > 0$.  The claim follows from a union bound argument by combining (\ref{eqn:sec}), (\ref{eqn:high_prob}),  the fact that  there are most $N^{2}$   subrectangles of  $L_{d,n}$, and choosing  $c_1$ large enough.
\end{proof}

\begin{lemma} \label{lem8}  
	Let $\tilde{\theta}$ be the DCART estimator.  If $\lambda  =  C\sigma^2\log (N) $ for a large enough $C$, then there exist  positive constants  $c_3$ and  $c_4$ such that the event 
	\begin{equation}
		\label{omega2}
		\Omega_2 \,=\,\left\{  k(\tilde{\theta} )  \,\leq \,    2 k_{\mathrm{dyad}}(\theta^*)      +  c_3 \right\}
	\end{equation}
	holds with probability at least  $1- N^{-c_4}$.
\end{lemma}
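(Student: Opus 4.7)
The plan is to exploit the optimality of DCART via a basic inequality and then handle the stochastic term by a uniform Gaussian bound over dyadic partitions. Let $\Pi^*$ be a dyadic rectangular partition of $L_{d,n}$ associated with $\theta^*$ of size $k_{\mathrm{dyad}}(\theta^*)$. Optimality of $\widetilde{\Pi}$ reads
\[
\tfrac{1}{2}\|y - \tilde{\theta}\|^2 + \lambda k(\tilde{\theta}) \,\leq\, \tfrac{1}{2}\|y - O_{S(\Pi^*)}(y)\|^2 + \lambda k_{\mathrm{dyad}}(\theta^*).
\]
Using $\theta^* \in S(\Pi^*)$ to write $\|y - O_{S(\Pi^*)}(y)\|^2 = \|\epsilon\|^2 - \|O_{S(\Pi^*)}(\epsilon)\|^2$ and expanding $\|y - \tilde{\theta}\|^2 = \|\epsilon\|^2 + \|\tilde{\theta} - \theta^*\|^2 - 2\langle\epsilon, \tilde{\theta} - \theta^*\rangle$, I would cancel $\|\epsilon\|^2$ and drop the non-positive $-\tfrac{1}{2}\|O_{S(\Pi^*)}(\epsilon)\|^2$ to obtain
\[
\lambda k(\tilde{\theta}) \,\leq\, \lambda k_{\mathrm{dyad}}(\theta^*) + \langle\epsilon, \tilde{\theta} - \theta^*\rangle - \tfrac{1}{2}\|\tilde{\theta} - \theta^*\|^2.
\]

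The key structural observation is that since dyadic rectangles form a laminar family, the join $\widetilde{\Pi} \vee \Pi^*$ is again a dyadic rectangular partition whose elements are each a rectangle from one of $\widetilde{\Pi}, \Pi^*$ nested inside a rectangle of the other, so $|\widetilde{\Pi} \vee \Pi^*| \leq k(\tilde{\theta}) + k_{\mathrm{dyad}}(\theta^*)$. Because $\tilde{\theta}$ and $\theta^*$ are constant on every block of the join, $\tilde{\theta} - \theta^* \in S(\widetilde{\Pi} \vee \Pi^*)$. Applying Cauchy--Schwarz in this subspace and $2ab \leq a^2 + b^2$,
\[
\langle\epsilon, \tilde{\theta} - \theta^*\rangle \,\leq\, \|P_{S(\widetilde{\Pi} \vee \Pi^*)}\epsilon\| \cdot \|\tilde{\theta} - \theta^*\| \,\leq\, \tfrac{1}{2}\|P_{S(\widetilde{\Pi} \vee \Pi^*)}\epsilon\|^2 + \tfrac{1}{2}\|\tilde{\theta} - \theta^*\|^2,
\]
so the $\|\tilde{\theta} - \theta^*\|^2$ term cancels and $\lambda k(\tilde{\theta}) \leq \lambda k_{\mathrm{dyad}}(\theta^*) + \tfrac{1}{2}\|P_{S(\widetilde{\Pi} \vee \Pi^*)}\epsilon\|^2$.

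To close the argument, I would establish, on a high-probability event, the uniform bound $\|P_{S(\Pi)}\epsilon\|^2 \leq C_1 \sigma^2 (|\Pi|+1)\log N$ over all dyadic rectangular partitions $\Pi$ of $L_{d,n}$. For each fixed $\Pi$, $\|P_{S(\Pi)}\epsilon\|^2/\sigma^2$ is a $\chi^2_{|\Pi|}$ variable, so a standard tail bound with deviation parameter $t \asymp (|\Pi|+1)\log N$, combined with a union bound over the at most $N^{Ck}$ dyadic partitions of size $k$ (using that $L_{d,n}$ contains only $O(N)$ dyadic rectangles), then a union bound over $k$, delivers the uniform bound with probability at least $1-N^{-c_4}$; this is the same mechanism used in the proof of Lemma~\ref{lem7}. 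Substituting $|\widetilde{\Pi} \vee \Pi^*| \leq k(\tilde{\theta}) + k_{\mathrm{dyad}}(\theta^*)$ and choosing $\lambda = C\sigma^2\log N$ with $C$ large enough (e.g. $C > 3C_1/2$), rearrangement gives
\[
k(\tilde{\theta}) \,\leq\, \frac{2C + C_1}{2C - C_1}\, k_{\mathrm{dyad}}(\theta^*) + \frac{C_1}{2C - C_1} \,\leq\, 2 k_{\mathrm{dyad}}(\theta^*) + c_3.
\]
The main obstacle is the uniform $\chi^2$ bound: the dimension of $S(\widetilde{\Pi} \vee \Pi^*)$ scales linearly in $k(\tilde{\theta})$ and each dimension costs $\log N$ from the union bound, which is precisely what forces $\lambda$ to be of order $\sigma^2 \log N$; a smaller $\lambda$ would prevent the factor $2$ from closing.
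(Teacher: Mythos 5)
Your overall strategy --- start from the optimality of $\widetilde{\Pi}$, cancel $\|\epsilon\|^2$ by Pythagoras, absorb $\|\tilde{\theta}-\theta^*\|^2$ via Cauchy--Schwarz, and control the remaining projected noise by a $\chi^2$ tail bound with a union bound over dyadic partitions --- is sound, and it is essentially the same mechanism as the paper's proof, which runs the basic inequality \eqref{eqn:basic} and then invokes the uniform subspace bound \eqref{eqn:high_prob} from the proof of Lemma~\ref{lem7}. However, your key structural claim is false: dyadic rectangles of $L_{d,n}$ do \emph{not} form a laminar family when $d\geq 2$, and the common refinement $\widetilde{\Pi}\vee\Pi^*$ can be much larger than $k(\tilde{\theta})+k_{\mathrm{dyad}}(\theta^*)$. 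For instance, in $L_{2,2}$ the dyadic rectangles $\{1\}\times\{1,2\}$ and $\{1,2\}\times\{1\}$ overlap without either containing the other; and in $L_{2,4}$ the partition into four columns and the partition into four rows are both dyadic of size $4$, yet their common refinement has $16$ cells, not at most $8$. In general $|\widetilde{\Pi}\vee\Pi^*|$ can be of order $k(\tilde{\theta})\cdot k_{\mathrm{dyad}}(\theta^*)$, in which case your final display becomes
\[
\lambda k(\tilde{\theta}) \;\leq\; \lambda k_{\mathrm{dyad}}(\theta^*) + \tfrac{C_1}{2}\sigma^2\bigl(k(\tilde{\theta})\,k_{\mathrm{dyad}}(\theta^*)+1\bigr)\log N,
\]
and the coefficient of $k(\tilde{\theta})$ on the right exceeds $\lambda$ once $k_{\mathrm{dyad}}(\theta^*)$ is large, so the rearrangement does not close.

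The gap is repairable with a one-line change: you do not need the join. Since $\tilde{\theta}\in S(\widetilde{\Pi})$ and $\theta^*\in S(\Pi^*)$, the difference $\tilde{\theta}-\theta^*$ lies in the subspace sum $V=S(\widetilde{\Pi})+S(\Pi^*)$, whose dimension is at most $k(\tilde{\theta})+k_{\mathrm{dyad}}(\theta^*)$ regardless of how the two partitions intersect. Replace $P_{S(\widetilde{\Pi}\vee\Pi^*)}$ by $P_V$ throughout; running the union bound over the at most $N^{Ck}$ dyadic partitions $\Pi$ of size $k$ with $\Pi^*$ held fixed, $\|P_{S(\Pi)+S(\Pi^*)}\epsilon\|^2/\sigma^2$ is stochastically dominated by $\chi^2_{k+k_{\mathrm{dyad}}(\theta^*)}$, and your tail-plus-union-bound argument gives the uniform bound $C_1\sigma^2\bigl(k+k_{\mathrm{dyad}}(\theta^*)+1\bigr)\log N$; the rest of your rearrangement then yields $k(\tilde{\theta})\leq 2k_{\mathrm{dyad}}(\theta^*)+c_3$ exactly as intended. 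With this correction your argument is a valid and somewhat more self-contained alternative to the paper's, which instead controls $\sup_{v\in S}\{\epsilon^{\top}(v-\theta^*)/\|v-\theta^*\|\}^2$ over all constant-on-partition subspaces of each dimension via Lemma~9.1 of \cite{chatterjee2019adaptive}.
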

\begin{proof}
	First notice that by the basic inequality (\ref{eqn:basic}), it holds that
	\begin{equation}\label{eq-lem8-1}
		\lambda	 k(\tilde{\theta}) 
		\leq  2 \lambda k_{\mathrm{dyad}}(\theta^*) \,+\,  2  \left \{(y-\theta^*)^{\top} \frac{(\tilde{\theta}  -\theta^*)}{\|\tilde{\theta}  -\theta^*   \|}\right\}^2-  \lambda k(\tilde{\theta}).
	\end{equation}
	Therefore,  from Lemma \ref{lem7}, choosing  $\lambda     = C\sigma^2 \log (N)$ with large enough  $C$ implies that 
	with probability at least $1-  N^{-c_2}$ the event 
	\[
	\Omega \,=\,\left\{  2  \left \{(y-\theta^*)^{\top}  \frac{(\tilde{\theta}  -\theta^*)}{\|\tilde{\theta}  -\theta^*   \|}\right\}^2-  \lambda k(\tilde{\theta})   \geq c_1 \sigma^2 \log (N) \right\}.
	\]
	holds.
	Considering \eqref{eq-lem8-1} on the event $\Omega$, we have that
	\[
	k(\tilde{\theta}) \,\leq\,   2k_{\mathrm{dyad}}(\theta^*)   +    \frac{2c_1}{C}
	\]
	and the claim follows.
\end{proof}

\begin{lemma}
	\label{lem9}
	The event 
	\begin{equation} \label{omega3}
		\Omega_3 \,=\,\left\{ \underset{R \subset  L_{d,n},   \,\,R   \,\,\text{rectangle}   }{\max}\, \   \vert R \vert \,\vert  \bar{\theta}^*_R  -  \bar{y}_R  \vert^2       \,\leq\, c_1 \sigma^2\log N  \right\}
	\end{equation}
	holds with probability  at least  $1- N^{-c_2}$  for some postive constants  $c_1$ and $c_2$.
\end{lemma}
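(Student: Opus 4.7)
The plan is a straightforward Gaussian tail bound combined with a union bound over all rectangles in $L_{d,n}$. First, I would observe that for any fixed rectangle $R \subset L_{d,n}$, the model \eqref{eq-model} gives
\[
\bar{y}_R - \bar{\theta}^*_R \,=\, \frac{1}{|R|}\sum_{i \in R} \epsilon_i \,\sim\, \mathcal{N}\!\left(0, \frac{\sigma^2}{|R|}\right),
\]
so that $\sqrt{|R|}\,(\bar{y}_R - \bar{\theta}^*_R) \sim \mathcal{N}(0, \sigma^2)$ and the quantity $|R|\,|\bar{\theta}^*_R - \bar{y}_R|^2$ is distributed as $\sigma^2 \chi_1^2$. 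Consequently, the standard Gaussian concentration inequality yields, for any $t > 0$,
\[
\mathbb{P}\bigl(|R|\,|\bar{\theta}^*_R - \bar{y}_R|^2 \,>\, t \sigma^2\bigr) \,\leq\, 2\exp(-t/2).
\]

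Next, I would upper bound the cardinality of the class of rectangles. A rectangle $R = \prod_{i=1}^d [a_i,b_i] \subset L_{d,n}$ is determined by the $d$ pairs $(a_i, b_i) \in [n]^2$ with $a_i \leq b_i$, so there are at most $n^{2d} = N^2$ such rectangles. Applying the union bound with $t = c_1 \log N$ gives
\[
\mathbb{P}\!\left(\max_{R} |R|\,|\bar{\theta}^*_R - \bar{y}_R|^2 \,>\, c_1 \sigma^2 \log N\right) \,\leq\, 2 N^{2} \cdot N^{-c_1/2} \,=\, 2 N^{2 - c_1/2}.
\]
Choosing $c_1$ sufficiently large (any $c_1 > 4$ suffices), the right-hand side is at most $N^{-c_2}$ for some $c_2 > 0$, which gives the claim.

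There is no real obstacle here; the only items requiring mild care are (i) the counting of rectangles (which is elementary and gives a polynomial-in-$N$ class, since $d$ is fixed), and (ii) picking the constant $c_1$ large enough so that $c_1/2 - 2$ dominates and yields a positive exponent $c_2$ in the final probability bound. Both are routine.
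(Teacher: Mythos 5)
Your proposal is correct and follows exactly the paper's argument: the paper's proof is a one-line appeal to the same three ingredients (at most $N^2$ rectangles, the Gaussian tail inequality, and a union bound), which you have simply written out in full detail.
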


\begin{proof}
	This follows immediately from the fact  that there are at most  $N^2$ rectangles, the Gaussian tail inequality and a union bound argument. 
\end{proof}

\begin{lemma} \label{lem4}
	With the notation of Theorem \ref{thm1}, we define the set $\mathcal{Q}_4 \subset [k(\tilde{\theta})] \times [k(\tilde{\theta})]$ as
	\begin{equation}\label{eqnb:cond}
		\mathcal{Q}_4 = \left\{(i, j): \, \bar{\theta}^*_{S_i} \neq \bar{\theta}^*_{S_j}, \, \vert  R_i \vert  \leq   2 \vert  S_i  \vert ,\, \vert  R_j \vert  \leq   2 \vert  S_j  \vert \right\}.
	\end{equation} 
	Define the event
	\begin{equation}\label{eq-def-omega-4}
		\Omega_4 = \left\{\frac{\vert R_i \vert   \,\vert R_j \vert }{  \vert   R_i\vert  +  \vert R_j\vert   }\left(\bar{Y}_{R_i} -  \bar{Y}_{R_j} \right)^2 \,\geq \,  \frac{\min\{ \vert  R_i  \vert , \vert  R_j\vert  \}  }{4}\kappa^2 -  C k_{\mathrm{dyad}}(\theta^*) \sigma^2 \log(N), \, \forall (i, j) \in \mathcal{Q}_4\right\},
	\end{equation}
	where $C > 0$ is an absolute constant.  Then there exists an absolute constant $c > 0$ such that the event $\Omega_4$ holds with probability at least $1 - N^{-c}$.
\end{lemma}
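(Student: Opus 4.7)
The plan is to decompose $\bar Y_{R_i}-\bar Y_{R_j}$ into a leading signal gap, a bias correction, and a noise contribution, and to control each piece on events that are already available. Setting $\delta_i = \bar\theta^*_{R_i}-\bar\theta^*_{S_i}$ and $\delta_j$ analogously,
\[
\bar Y_{R_i}-\bar Y_{R_j} \;=\; (\bar\theta^*_{S_i}-\bar\theta^*_{S_j})+(\delta_i-\delta_j)+(\bar\epsilon_{R_i}-\bar\epsilon_{R_j}).
\]
For any $l_i\in S_i$ and $l_j\in S_j$, the values $\theta^*_{l_i}=\bar\theta^*_{S_i}$ and $\theta^*_{l_j}=\bar\theta^*_{S_j}$ differ and lie in distinct elements of $\Lambda^*$, so the definition of $\kappa$ yields $|\bar\theta^*_{S_i}-\bar\theta^*_{S_j}|\geq\kappa$. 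Applying $(a+b)^2\geq a^2/2-b^2$ with $a$ the signal gap and $b$ the sum of bias and noise, together with the harmonic-mean bound $\frac{|R_i||R_j|}{|R_i|+|R_j|}\geq \frac{1}{2}\min\{|R_i|,|R_j|\}$, produces the target main term $\min\{|R_i|,|R_j|\}\kappa^2/4$ at the cost of two residuals that must be shown to be $O(k_{\mathrm{dyad}}(\theta^*)\sigma^2\log N)$.

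The noise residual $\frac{|R_i||R_j|}{|R_i|+|R_j|}(\bar\epsilon_{R_i}-\bar\epsilon_{R_j})^2$ is at most $C_{\mathcal B}^2\sigma^2\log N$ uniformly over pairs of disjoint sub-rectangles on the event $\mathcal B$ of Lemma~\ref{lem2}. The real work is the bias residual $\frac{|R_i||R_j|}{|R_i|+|R_j|}(\delta_i-\delta_j)^2$. Since $\bar\theta^*_{R_i}$ minimises $c\mapsto\sum_{l\in R_i}(\theta^*_l-c)^2$, and since $\tilde{\theta}$ is the constant $\bar Y_{R_i}$ on $R_i$ by construction of DCART,
\[
|S_i|\,\delta_i^2 \;=\; \sum_{l\in S_i}(\bar\theta^*_{S_i}-\bar\theta^*_{R_i})^2 \;\leq\; \sum_{l\in R_i}(\theta^*_l-\bar\theta^*_{R_i})^2 \;\leq\; \sum_{l\in R_i}(\theta^*_l-\tilde{\theta}_l)^2 \;\leq\; \|\theta^*-\tilde{\theta}\|^2.
\]
The DCART oracle inequality of Lemma~\ref{lem7} (event $\Omega_1$ applied with $R=L_{d,n}$) then gives $\|\theta^*-\tilde{\theta}\|^2\leq Ck_{\mathrm{dyad}}(\theta^*)\sigma^2\log N$, and the defining condition of $\mathcal Q_4$, namely $|R_i|\leq 2|S_i|$, converts $1/|S_i|\leq 2/|R_i|$. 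Combining this with the symmetric bound for $\delta_j$ yields $\frac{|R_i||R_j|}{|R_i|+|R_j|}(2\delta_i^2+2\delta_j^2)\leq 8Ck_{\mathrm{dyad}}(\theta^*)\sigma^2\log N$, independent of $|R_i|$ and $|R_j|$, which is exactly the scaling needed.

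Assembling the three bounds on the event $\Omega_1\cap\mathcal B$, which has probability at least $1-N^{-c}$ by a union bound, and absorbing constants into a single $C$, gives the claim uniformly over $(i,j)\in\mathcal Q_4$. The main obstacle is that $\theta^*$ on $R_i\setminus S_i$ is a priori unconstrained in magnitude, so no pointwise bound on $|\delta_i|$ is available; the essential trick is to route the estimate through the sum-of-squares $\sum_{l\in R_i}(\theta^*_l-\bar\theta^*_{R_i})^2$, which is automatically controlled by the DCART denoising rate because $\tilde{\theta}|_{R_i}=\bar Y_{R_i}$, and to use the condition $|S_i|\geq |R_i|/2$ built into $\mathcal Q_4$ to translate that sum-of-squares bound into a uniform control of $\delta_i^2$ that aggregates correctly against the harmonic weight $\frac{|R_i||R_j|}{|R_i|+|R_j|}$.
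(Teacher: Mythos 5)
Your argument is correct and follows essentially the same route as the paper's proof: the same decomposition of $\bar Y_{R_i}-\bar Y_{R_j}$ into signal gap, bias and noise, with the noise controlled by the event $\mathcal{B}$ of Lemma~\ref{lem2}, the bias controlled by the DCART oracle inequality of Lemma~\ref{lem7}, and the condition $\vert R_i\vert \le 2\vert S_i\vert$ used in exactly the same way to convert the per-rectangle sum-of-squares bound into a bound on $\delta_i^2$. The only cosmetic difference is that you route the bias through the variational property of the mean, $\sum_{l\in R_i}(\theta^*_l-\bar\theta^*_{R_i})^2\le\sum_{l\in R_i}(\theta^*_l-\tilde\theta_l)^2$, which lets you bypass the auxiliary event $\Omega_3$ of Lemma~\ref{lem9} that the paper also invokes.
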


\begin{proof}
	The proof is conducted assuming the high-probability event  $\mathcal{B}$ defined in (\ref{eqn:beta}).  Now, any for $(i, j) \in \mathcal{Q}_4$, we have that
	\begin{align*}
		& \frac{\vert R_i \vert \,\vert R_j \vert }{  \vert   R_i\vert  +  \vert R_j\vert   }\left(\bar{Y}_{R_i} -  \bar{Y}_{R_j} \right)^2 = \frac{\vert R_i \vert   \,\vert R_j \vert }{  \vert   R_i\vert  +  \vert R_j\vert   }\left\{ -   \bar{\theta}^*_{R_j} +  \bar{\theta}^*_{R_i}   +   (  \bar{Y}_{R_i} -  \bar{\theta}^*_{R_i}      +  \bar{\theta}^*_{R_j}-  \bar{Y}_{R_j}     )\right\}^2 \\    
		\geq & \frac{\vert R_i \vert   \,\vert R_j \vert }{2(\vert R_i\vert  +  \vert R_j\vert)   }\left(  -   \bar{\theta}^*_{R_j} +  \bar{\theta}^*_{R_i}  \right)^2 - \frac{\vert R_i \vert   \,\vert R_j \vert }{  \vert   R_i\vert  +  \vert R_j\vert   }\left(    \bar{Y}_{R_i} -  \bar{\theta}^*_{R_i}      +  \bar{\theta}^*_{R_j}-  \bar{Y}_{R_j}     \right)^2 \\
		\geq & \frac{\vert R_i \vert   \,\vert R_j \vert }{2(\vert R_i\vert  +  \vert R_j\vert)}\left(  -   \bar{\theta}^*_{R_j} +  \bar{\theta}^*_{R_i}  \right)^2 - C_{\mathcal{B}}^2 \sigma^2 \log(N)\\
		= & \frac{\vert R_i \vert   \,\vert R_j \vert }{2(\vert   R_i\vert  +  \vert R_j\vert)}\left\{  \bar{\theta}^*_{S_i} - \bar{\theta}^*_{S_j} + (\bar{\theta}^*_{S_j} - \bar{\theta}^*_{R_j} +  \bar{\theta}^*_{R_i} - \bar{\theta}^*_{S_i})   \right\}^2 - C_{\mathcal{B}}^2 \sigma^2 \log (N) \\
		\geq & \frac{\vert R_i \vert   \,\vert R_j \vert }{4(\vert   R_i\vert  +  \vert R_j\vert)}\left(  \bar{\theta}^*_{S_i} - \bar{\theta}^*_{S_j}    \right)^2 -  \frac{\vert R_i \vert   \,\vert R_j \vert }{2(\vert   R_i\vert  +  \vert R_j\vert)}\left(   \bar{\theta}^*_{S_j}    -   \bar{\theta}^*_{R_j} +  \bar{\theta}^*_{R_i}   - \bar{\theta}^*_{S_i}     \right)^2 \\
		& \hspace{1cm} - C_{\mathcal{B}}^2 \sigma^2 \log(N) \\
		\geq & \frac{\vert R_i \vert   \,\vert R_j \vert }{4(\vert   R_i\vert  +  \vert R_j\vert)}\left(  \bar{\theta}^*_{S_i} - \bar{\theta}^*_{S_j}    \right)^2 -  	\frac{\vert R_i \vert   \,\vert R_j \vert }{  \vert   R_i\vert  +  \vert R_j\vert   }\left(   \bar{\theta}^*_{S_j}    -   \bar{\theta}^*_{R_j}      \right)^2 - \frac{\vert R_i \vert   \,\vert R_j \vert }{  \vert   R_i\vert  +  \vert R_j\vert   }\left(   \bar{\theta}^*_{S_i}    -   \bar{\theta}^*_{R_i}      \right)^2 \\
		& \hspace{1cm} - C_{\mathcal{B}}^2 \sigma^2 \log (N) \\   
		\geq & 	\frac{\min\{  \vert  R_i\vert,  \vert  R_j\vert   \}}{4} \kappa^2 -    \vert R_j\vert \left(   \bar{\theta}^*_{S_j}    -   \bar{\theta}^*_{R_j}      \right)^2  -    \vert R_i\vert \left(   \bar{\theta}^*_{S_i}    -   \bar{\theta}^*_{R_i}      \right)^2 - C_{\mathcal{B}}^2 \sigma^2 \log (N)\\  
		\geq & 	\frac{\min\{  \vert  R_i\vert,  \vert  R_j\vert   \}}{4} \kappa^2 -    \vert  R_j\vert \left\{   \frac{1}{\vert  S_j\vert }  \sum_{l \in S_j} (\theta^*_l -\bar{\theta}^*_{R_j}      )^2    \right\} - \vert  R_i\vert \left\{   \frac{1}{\vert  S_i\vert }  \sum_{l \in S_i} (\theta^*_{l}  -\bar{\theta}^*_{R_i}      )^2    \right\} \\
		& \hspace{1cm} - C_{\mathcal{B}}^2 \sigma^2 \log(N), 
	\end{align*}
	where the  first and third inequalities  follow from the inequality  $(a+b)^2 \geq  a^2/2 -b^2$, the second by the definition of $\mathcal{B}$ in \eqref{eqn:beta}, the fourth by the inequality $(a+b)^2 \leq  2a^2 +  2b^2$ and the sixth by Jensen's inequality.  Then in the event $\Omega_1$ defined in \eqref{eq-omega-1-def}, it from Lemma \ref{lem7} that,
	\begin{align*}
		& \frac{\vert R_i \vert   \,\vert R_j \vert }{  \vert   R_i\vert  +  \vert R_j\vert   }\left(\bar{Y}_{R_i} -  \bar{Y}_{R_j} \right)^2 \\
		\geq & \frac{\min\{  \vert  R_i\vert,  \vert  R_j\vert   \}}{4} \kappa^2 -   2  \sum_{l \in S_j} (\theta^*_{l}  -\bar{\theta}^*_{R_j})^2  -  2 \sum_{l \in S_i} (\theta^*_{l}  -\bar{\theta}^*_{R_i}      )^2    - C_{\mathcal{B}}^2 \sigma^2 \log (N)  \\  
		\geq & 	\frac{\min\{  \vert  R_i\vert,  \vert  R_j\vert   \}}{4} \kappa^2 -   4  \sum_{l \in S_j} (\theta^*_{l}  -\bar{y}_{R_j}      )^2  - 4 \vert  S_j\vert ( \bar{y}_{R_j}  -   \bar{\theta^*}_{R_j}   )^2 \\ 
		& \hspace{1cm}  -  4 \sum_{l \in S_i} (\theta^*_{l}  -\bar{y}_{R_i}      )^2 - 4 \vert  S_i\vert ( \bar{y}_{R_i}  -   \bar{\theta}^*_{R_i}   )^2    - C_{\mathcal{B}}^2 \sigma^2 \log(N) \\ 
		\geq & 	\frac{\min\{  \vert  R_i\vert,  \vert  R_j\vert   \}}{4} \kappa^2  -  C\sigma^2 k_{\mathrm{dyad}}(\theta^*) \log(N),
	\end{align*}
	for some constant  $C>0$, where the second inequality follows from the inequality  $(a+b)^2 \leq 2a^2 + 2b^2$, and the last one by Lemmas  \ref{lem7} and \ref{lem9}.
	The claim then follows.
\end{proof}

\begin{lemma} \label{lem10}
	With the notation of Theorem \ref{thm1}, we define the set $\mathcal{Q}_5 \subset [k(\tilde{\theta})] \times [k(\tilde{\theta})]$ as
	\begin{equation}\label{eqn:cond2}
		\mathcal{Q}_5 = \left\{(i, j): \, \bar{\theta}^*_{S_i} = \bar{\theta}^*_{S_j }, \, \vert  R_i \vert  \leq   2 \vert  S_i  \vert, \, \vert  R_j \vert  \leq   2 \vert  S_j  \vert\right\}.
	\end{equation}	
	Define the event 
	\begin{equation}\label{eqn:cond3}
		\Omega_5 = \left\{\frac{\vert R_i \vert   \,\vert R_j \vert }{  \vert   R_i\vert  +  \vert R_j\vert   }\left(\bar{Y}_{R_i} -  \bar{Y}_{R_j} \right)^2 \,\leq \,  C k_{\mathrm{dyad}}(\theta^*) \sigma^2 \log (N), \, \forall (i, j) \in \mathcal{Q}_5\right\},
	\end{equation}
	where $C > 0$ is an absolute constant.  Then there exists an absolute constant $c > 0$ such that $\Omega_5$  holds	with probability at least $1-N^{-c}$.
\end{lemma}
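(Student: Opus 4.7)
The plan is to mirror the proof of Lemma~\ref{lem4}, but exploit the assumption $\bar{\theta}^*_{S_i} = \bar{\theta}^*_{S_j}$ to kill the main signal term rather than to produce a $\kappa^2$ lower bound. I will work on the intersection of the high-probability events $\mathcal{B}$ from \eqref{eqn:beta}, $\Omega_1$ from \eqref{eq-omega-1-def} and $\Omega_3$ from \eqref{omega3}, which by a union bound has probability at least $1-N^{-c}$ for some absolute $c>0$. Fix any $(i,j)\in\mathcal{Q}_5$ and decompose
\[
\bar{Y}_{R_i}-\bar{Y}_{R_j} \;=\; \bigl(\bar{Y}_{R_i}-\bar{\theta}^*_{R_i}-\bar{Y}_{R_j}+\bar{\theta}^*_{R_j}\bigr) \;+\; \bigl(\bar{\theta}^*_{R_i}-\bar{\theta}^*_{R_j}\bigr).
\]
Applying $(a+b)^2\le 2a^2+2b^2$ and multiplying by $\tfrac{|R_i||R_j|}{|R_i|+|R_j|}$, the first (noise) square is controlled directly by $\mathcal{B}$, giving a contribution of at most $2C_{\mathcal B}^2\sigma^2\log(N)$.

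The second (bias) square is where $\bar{\theta}^*_{S_i}=\bar{\theta}^*_{S_j}$ enters. I insert and cancel the common value: $\bar{\theta}^*_{R_i}-\bar{\theta}^*_{R_j} = (\bar{\theta}^*_{R_i}-\bar{\theta}^*_{S_i})-(\bar{\theta}^*_{R_j}-\bar{\theta}^*_{S_j})$. Using $(a-b)^2\le 2a^2+2b^2$ and the elementary bound $\tfrac{|R_i||R_j|}{|R_i|+|R_j|}\le \min\{|R_i|,|R_j|\}$, this contributes at most $4|R_i|(\bar{\theta}^*_{R_i}-\bar{\theta}^*_{S_i})^2 + 4|R_j|(\bar{\theta}^*_{R_j}-\bar{\theta}^*_{S_j})^2$. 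For each such term I use the key identity, valid because $\theta^*$ is constant on $S_i$,
\[
|S_i|\bigl(\bar{\theta}^*_{R_i}-\bar{\theta}^*_{S_i}\bigr)^2 \;=\; \sum_{l\in S_i}\bigl(\theta^*_l-\bar{\theta}^*_{R_i}\bigr)^2,
\]
together with $|R_i|\le 2|S_i|$, to conclude $|R_i|(\bar{\theta}^*_{R_i}-\bar{\theta}^*_{S_i})^2 \le 2\sum_{l\in S_i}(\theta^*_l-\bar{\theta}^*_{R_i})^2$, and symmetrically for $j$.

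Next I substitute $\bar{\theta}^*_{R_i}=\bar{y}_{R_i}+(\bar{\theta}^*_{R_i}-\bar{y}_{R_i})$ and use $(a+b)^2\le 2a^2+2b^2$ once more to obtain
\[
\sum_{l\in S_i}\bigl(\theta^*_l-\bar{\theta}^*_{R_i}\bigr)^2 \;\le\; 2\sum_{l\in R_i}\bigl(\theta^*_l-\tilde{\theta}_l\bigr)^2 \;+\; 2|R_i|\bigl(\bar{y}_{R_i}-\bar{\theta}^*_{R_i}\bigr)^2,
\]
where the first term uses that $\tilde{\theta}_l=\bar{y}_{R_i}$ on $R_i$. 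Summing these inequalities against indices $i$ and $j$ in an obvious way gives a bound by $\|\tilde{\theta}-\theta^*\|^2$ plus two single-rectangle terms; on $\Omega_1$ (applied to the full lattice $R=L_{d,n}$) the former is at most $Ck_{\mathrm{dyad}}(\theta^*)\sigma^2\log(N)$, and on $\Omega_3$ each of the latter is at most $c_1\sigma^2\log(N)$. Combining with the noise contribution from $\mathcal{B}$ yields \eqref{eqn:cond3}.

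The only non-routine ingredient is noticing that the assumption $|R_i|\le 2|S_i|$ built into $\mathcal{Q}_5$ is precisely what converts the local bias $|R_i|(\bar\theta^*_{R_i}-\bar\theta^*_{S_i})^2$ into a sum over $S_i$ that is dominated by the global DCART denoising error from Lemma~\ref{lem7}. If that ratio constraint were dropped, the bias would be uncontrolled, so this is the step to double-check when writing the formal argument; everything else is symmetric to the proof of Lemma~\ref{lem4}.
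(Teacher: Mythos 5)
Your proposal is correct and follows essentially the same route as the paper's proof: the same noise/bias split controlled by the event $\mathcal{B}$, the same cancellation of $\bar{\theta}^*_{S_i}=\bar{\theta}^*_{S_j}$, the same use of $|R_i|\le 2|S_i|$ to convert the bias into sums over $S_i$, and the same final appeal to Lemmas~\ref{lem7} and~\ref{lem9}. The only cosmetic difference is that you use the exact identity $|S_i|(\bar{\theta}^*_{R_i}-\bar{\theta}^*_{S_i})^2=\sum_{l\in S_i}(\theta^*_l-\bar{\theta}^*_{R_i})^2$ where the paper invokes Jensen's inequality (which is an equality here anyway, since $\theta^*$ is constant on $S_i$).
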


\begin{proof}
	We assume through that  that the high-probability event  $\mathcal{B}$  defined in (\ref{eqn:beta}) holds.  Let  $(i, j) \in \mathcal{Q}_5$. Then,
	\[
	\begin{array}{lll}
		&&\displaystyle \frac{\vert R_i \vert   \,\vert R_j \vert }{  \vert   R_i\vert  +  \vert R_j\vert   }\left(\bar{Y}_{R_i} -  \bar{Y}_{R_j} \right)^2   \\
		& \leq & \displaystyle	\frac{2\vert R_i \vert   \,\vert R_j \vert }{  \vert   R_i\vert  +  \vert R_j\vert   }\left(\bar{Y}_{R_i} -   \bar{\theta}^*_{R_i} - \bar{Y}_{R_j} +  \bar{\theta}^*_{R_j} \right)^2 \,+\,	\frac{2\vert R_i \vert   \,\vert R_j \vert }{  \vert   R_i\vert  +  \vert R_j\vert   }\left(\bar{\theta}^*_{R_j}  -   \bar{\theta}^*_{R_i}  \right)^2\\
		&\leq  & \displaystyle  	2 C_{\mathcal{B}} \sigma^2\log(N)\,+\,\frac{4\vert R_i \vert   \,\vert R_j \vert }{  \vert   R_i\vert  +  \vert R_j\vert   }\left(\bar{\theta}^*_{R_j}  -   \bar{\theta}^*_{S_j}  \right)^2 + 	\frac{4\vert R_i \vert   \,\vert R_j \vert }{  \vert   R_i\vert  +  \vert R_j\vert   }\left(\bar{\theta}^*_{R_i}  -   \bar{\theta}^*_{S_i}  \right)^2\\
		&\leq  & \displaystyle  	2 C_{\mathcal{B}} \sigma^2\log(N) \,+\,	\frac{4\vert R_j\vert }{\vert S_j \vert }\sum_{l \in  S_j} ( \theta_l^* -  \bar{\theta}^*_{  R_j  }  )^2\,+\,	\frac{4\vert R_i\vert }{\vert S_i \vert }\sum_{l \in  S_i} ( \theta_l^* -  \bar{\theta}^*_{  R_i  }  )^2\\
		&\leq  & \displaystyle  	2 C_{\mathcal{B}} \sigma^2\log(N) \,+\,	8\sum_{l \in  S_j} ( \theta_l^* -  \bar{\theta}^*_{  R_j  }  )^2\,+\,	8\sum_{l \in  S_i} ( \theta_l^* -  \bar{\theta}^*_{  R_i  }  )^2\\
		&\leq  & \displaystyle  	2 C_{\mathcal{B}} \sigma^2\log (N) \,+\,	16\sum_{l \in  S_j} ( \theta_l^* -  \bar{y}_{  R_j  }  )^2\,+\,	16\sum_{l \in  S_i} ( \theta_l^* -  \bar{y}_{  R_i  }  )^2\\
		& &\displaystyle  \,+\,16\vert S_j \vert (  \bar{\theta}^*_{  R_j  } -  \bar{y}_{  R_j  }  )^2\,+\, 16\vert S_i \vert (  \bar{\theta}^*_{  R_i  } -  \bar{y}_{  R_i  }  )^2.
	\end{array}
	\]
	The first and second inequalities use the trivial fact that $(a+b)^2 \leq 2 a^2 + 2b^2$, the second inequality uses the event $\mathcal{B}$ and the third follows from Lemma \ref{lem1}.
	Combining the above inequality with Lemmas  \ref{lem2}, \ref{lem7} and \ref{lem9} completes the proof.
\end{proof}

\section{Experiments section details}

\subsection{Scenarios}
\label{sec:scenarios}

We detail all the signal patterns considered in the simulations in \Cref{sec-numerical}.  All these scenarios are depicted in \Cref{fig1}.  

\textbf{Scenario 1}.  For all $(a, b) \in L_{2, n}$, let
\[
\theta^*_{(a,b)}   \,=\, \begin{cases}
	1  & \text{if} \,\,  \frac{n}{4}<a< \frac{3n}{4}\,\,\text{and}\,\,  \frac{n}{4}<b< \frac{3n}{4},\\
	0  & \text{otherwise}.
\end{cases}
\]

\textbf{Scenario 2}. For all $(a, b) \in L_{2, n}$, let
\[
\theta^*_{(a,b)}   \,=\, \begin{cases}
	1  & \text{if} \,\,   (a- \frac{n}{4})^2 + (b- \frac{n}{4})^2  <   \left(\frac{n}{5}\right)^2,      \\
	1  & \text{if} \,\,   (a- \frac{3n}{4})^2 + (b- \frac{3n}{4})^2  <   \left(\frac{n}{5}\right)^2,      \\
	0  & \text{otherwise}.
\end{cases}
\]

\textbf{Scenario 3}.  For all $(a, b) \in L_{2, n}$, let
\[
\theta^*_{(a,b)}   \,=\, \begin{cases}
	1  & \text{if} \,\,     a \in (\frac{n}{4},\frac{3n}{4})\,\,\text{and}\,\, b \in (\frac{n}{4},\frac{3n}{8}    ),  \\
	1  & \text{if} \,\,   a \in (\frac{5n}{8},\frac{3n}{4})\,\,\text{and}\,\, b \in [\frac{3n}{8},\frac{3n}{4}    ),   \\
	-1 &\text{if }\,\,  a > \frac{3n}{4}\,\,\text{and}\,\, b > \frac{3n}{4},  \\
	0  & \text{otherwise}.
\end{cases}
\]

\textbf{Scenario 4}.  For all $(a, b) \in L_{2, n}$, let
\[
\theta^*_{(a,b)}   \,=\, \begin{cases}
	1  & \text{if} \,\,    a  < \frac{n}{5}    \,\,\text{and}\,\, b <\frac{n}{5},   \\
	2  & \text{if} \,\,    a  < \frac{n}{5}    \,\,\text{and}\,\, b >\frac{4n}{5},   \\
	3 & \text{if} \,\,    a  >\frac{4n}{5}    \,\,\text{and}\,\, b <\frac{4n}{5},  \\
	4 & \text{if} \,\,    a  >\frac{4n}{5}    \,\,\text{and}\,\, b >\frac{4n}{5},   \\
	5 & \text{if} \,\,    a  \in  (\frac{3n}{8},\frac{5n}{8})   \,\,\text{and}\,\, b \in  (\frac{3n}{8},\frac{5n}{8}),    \\
	0  & \text{otherwise}.
\end{cases}
\]

\subsection{Tuning parameters for naive two step-estimator}
\label{sec:tuning_parameters}

We first construct a sequence of DCART estimators $\tilde{\theta}(\lambda)$, $\lambda \in \mathcal{S}_{\lambda} = \{5 +  (30-5)l/14  , \, l = 0, \ldots, 14\}$.   Indexing the nodes in $L_{d, n}$ as $\{i_1,\ldots, i_{n^2}\}$, we calculate
\[
\hat{\sigma}^2 \,=\, (2n^2)^{-1} \sum_{ j \in [n^2-1]} (y_{i_j} -   y_{i_{j+1}} )^2.
\]
Based on this variance estimator, we choose 
\[
\lambda_1 = \argmin_{\lambda \in \mathcal{S}_{\lambda}}\left[ \sum_{ i \in L_{d,n}   } \{y_i -   \tilde{\theta}_i(\lambda)\}^2  +   \hat{\sigma}^2 k(\tilde{\theta}(\lambda)) \log (N)\right] \quad \mbox{and} \quad \tilde{\theta} = \tilde{\theta}(\lambda_1).   
\]
Once  $\tilde{\theta}$  is computed, in the second step, we  construct the final estimator denoted here as $\widehat{\Lambda}$ by setting  $\lambda_2 = \lambda_1$, $\gamma = 2^3$ and $\eta  = 2^3$ (see Section \ref{sec:two_step}).  The choice  $\lambda_2 = \lambda_1$ is consistent with the theory, since in all the scenarios  considered here $k_{\mathrm{dyad}}(\theta^*)$ is small.

\subsection{Implementation details of total variation based estimator} \label{sec:tv}

We now discuss the implementation details for the total variation based estimator  used in our experiments. Starting from  the $L_{d,n}$ lattice, we let $D$   be an incidence matrix corresponding to $L_{d,n}$,  see for instance  \cite{tibshirani2011solution}. We then compute, using the algorithm from \cite{tansey2015fast},  the estimators 
\[
\beta_{\lambda}\,=\,  \underset{\beta \in   \mathbb{R}^{L_{d,n} } }{\arg \min}\,\,\left\{  \frac{1}{2}\|\beta - y\|^2  \,+\,   \lambda \| D\beta\|_1 \right\}
\]  
for $\lambda \in       \{  10^{  3l/19 }  \,:\,   l = 0,1,\ldots, 19  \}$. Then letting   $\hat{\sigma}^2$ as in Section \ref{sec-numerical}, we  let 
\[
\lambda^*\,=\,   \underset{\lambda  \in  \{  10^{  3l/19 }  \,:\,   l = 0,1,\ldots, 19  \}  }{\arg \min}\,\left\{ \|  \beta_{\lambda}    -y\|^2   \,+\,     \hat{\sigma}^2 c(\beta_{\lambda}) \log (N)  \right\}
\]
where  $c(\beta_{\lambda})$ is the number of connected components  in $L_{d,n}$ induced by  $\beta_{\lambda}$.  In other words,  $c(\beta_{\lambda})$ is the estimated  degrees of freedom in the model associated with $\beta_{\lambda}$ in the language of  \cite{tibshirani2012degrees}. Then we set $\hat{\beta} $ equal to  $ \beta_{\lambda^*}$ after rounding each entry of  $ \beta_{\lambda^*}$ to three decimal digits.

Next, let $\{R_l\}_{l \in [q]}$  be the partition of $L_{d,n}$ induced by $\hat{\beta}$, $\eta = \gamma = 8$ and  $a =0.15$.  For each $(i, j) \in [q] \times [q]$, let $z_{(i, j)} = 1$ if 
\[
\mathrm{dist}(R_i,R_j) \leq  \gamma, \quad \min\{\vert  R_i \vert,  \vert   R_ j\vert  \} \geq \eta
\]
and  $\vert   \bar{Y}_{ R_i }   - \bar{Y}_{ R_j } \vert <a$; otherwise, let $z_{(i, j)} = 0$.  With this notation, let $E = \{ e \in [q] \times [q]:\, z_e = 1\}$ and let $\{\mathcal{C}_l\}_{l \in [\hat{L}]}$ be the collection of all the connected components of the undirected graph $([q] \backslash\mathcal{I}, E)$, where $\mathcal{I} = \{i   \in  [q]:\,  \vert  R_i \vert  \leq \eta  \}$. 
Our final estimator becomes
\begin{equation}\label{eq-tilde-lambda-supp2}
	\Lambda^{\prime }\, =\, \left\{  \cup_{j \in \mathcal{C}_1} R_j,\ldots,\cup_{j \in \mathcal{C}_{ \hat{L} } } R_j \right\}.
\end{equation}
Notice that   in (\ref{eq-tilde-lambda-supp2})  we do not include  the  sets $R_j$ with a small number of elements as we found that by using them the  performance of the estimator  becomes worst.

\subsection{Additional scenario}

In this subsection we consider an additional scenario, namely Scenario 5. For all $(a, b) \in L_{2, n}$,  we let 
\[
\theta^*_{(a,b)}   \,=\, \begin{cases}
	2  & \text{if} \,\,   a<\frac{n}{5}  \,\,\,\text{and}\,\,\,b>\frac{2n}{5},\\
	3  & \text{if} \,\,  a>\frac{4n}{5}  \,\,\,\text{and}\,\,\,b<\frac{3n}{5},\\
	4 & \text{if} \,\,   \vert  a - \frac{n}{2}\vert  <\frac{n}{4.5}  \,\,\,\text{and}\,\,\,b<\frac{n}{4.5},\\
	0  & \text{otherwise}.
\end{cases}
\]

\begin{figure}[h!]
	\begin{center}
		\includegraphics[width=1.52in,height=1.52in]{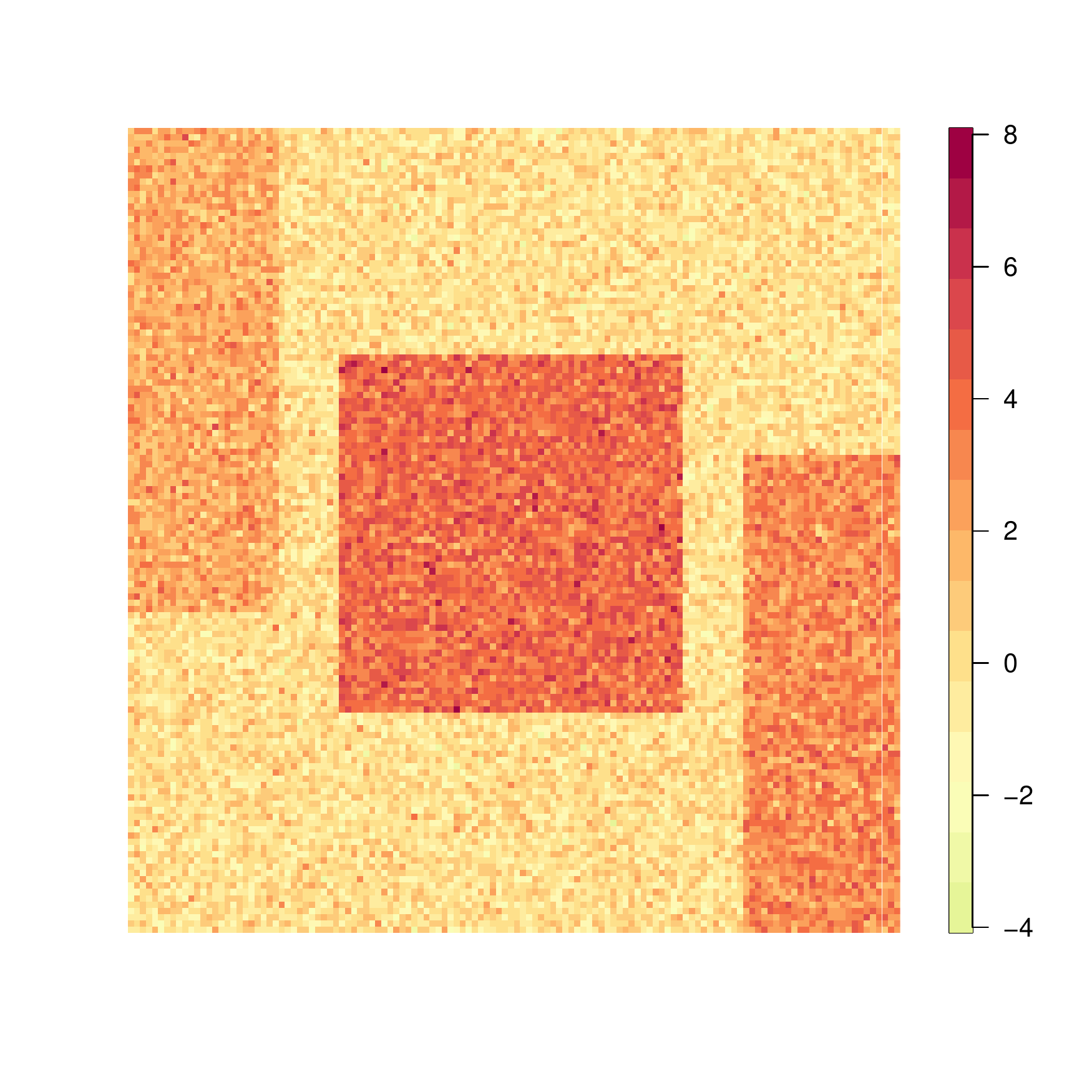} 
		\includegraphics[width=1.52in,height=1.52in]{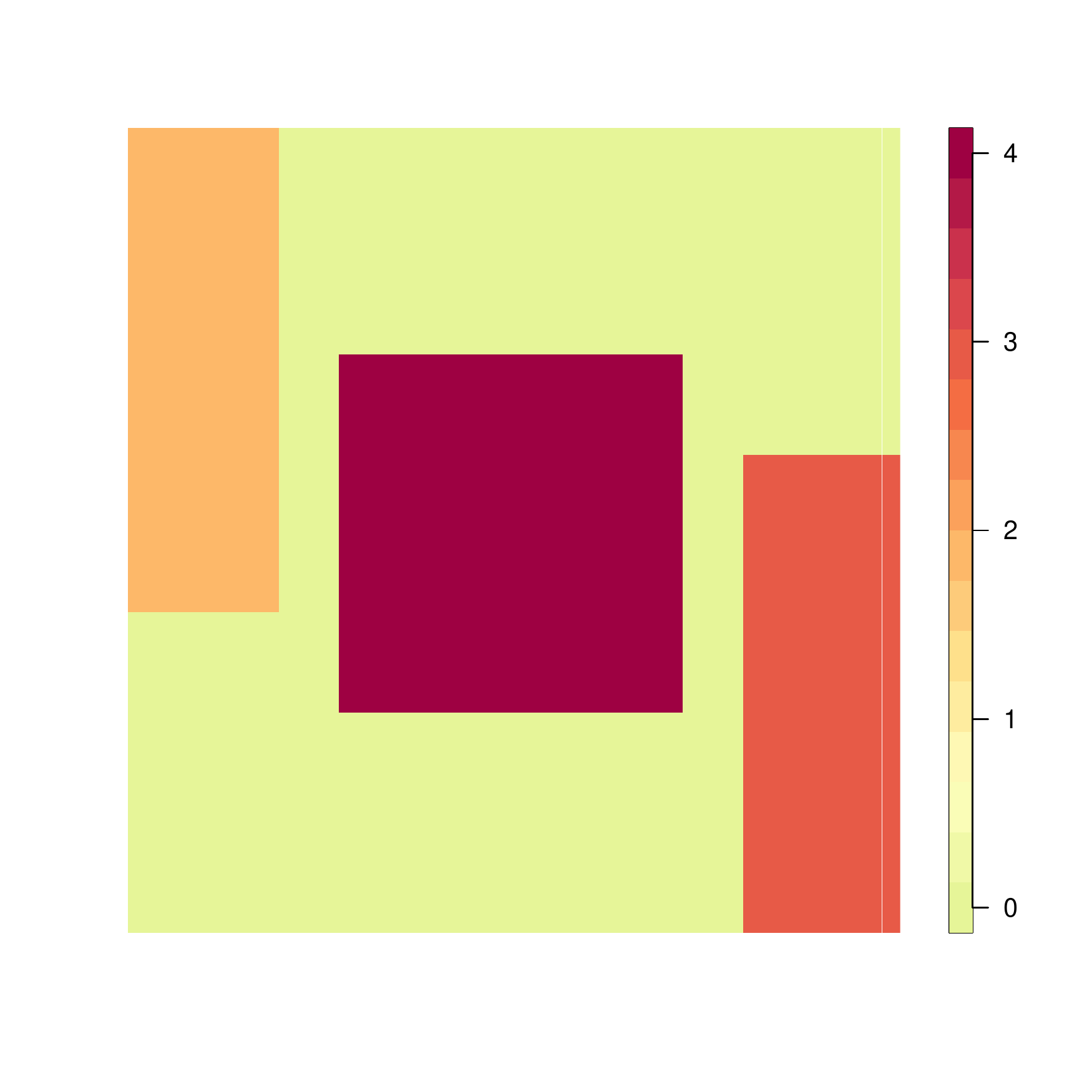} 
		\includegraphics[width=1.52in,height=1.52in]{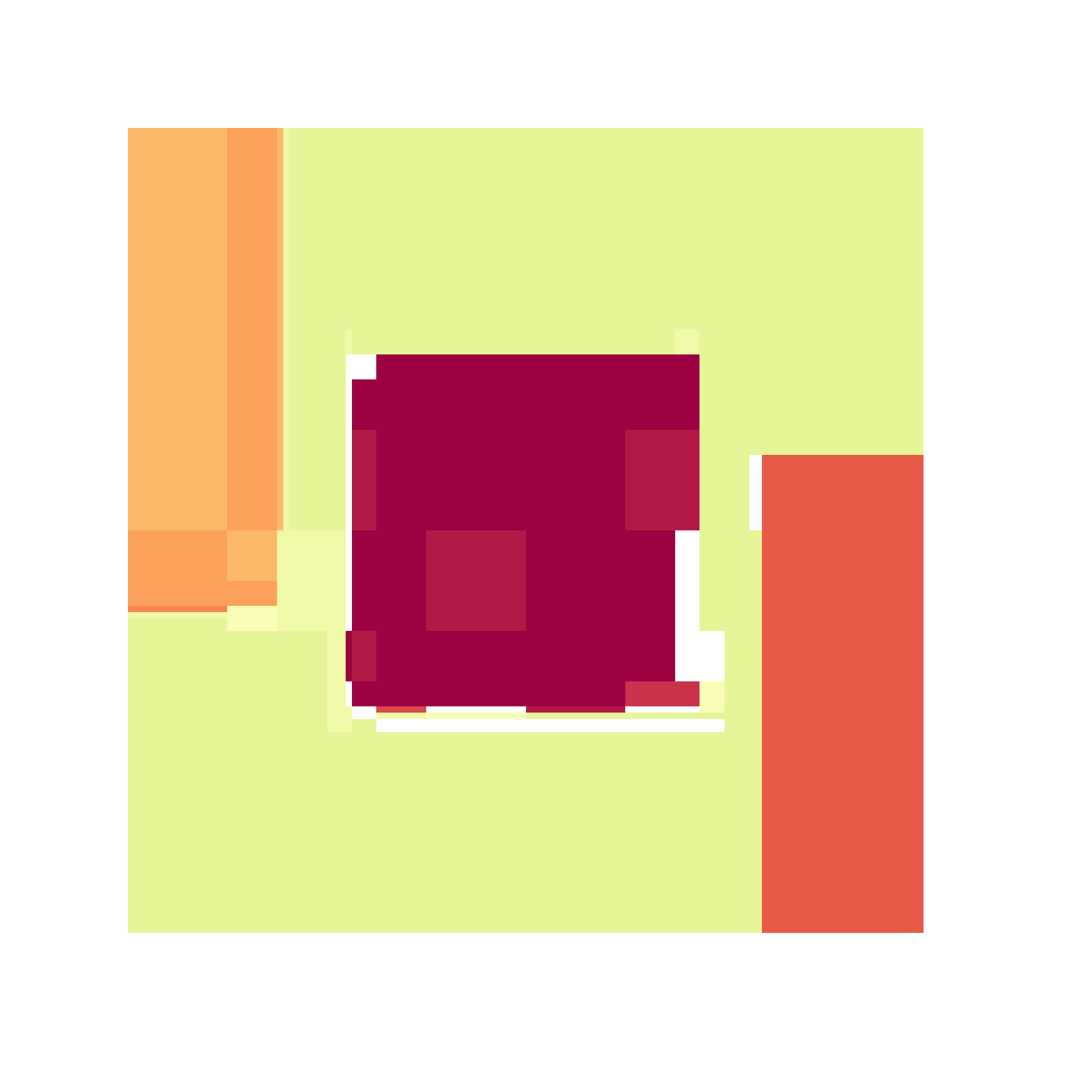} 
		\includegraphics[width=1.52in,height=1.52in]{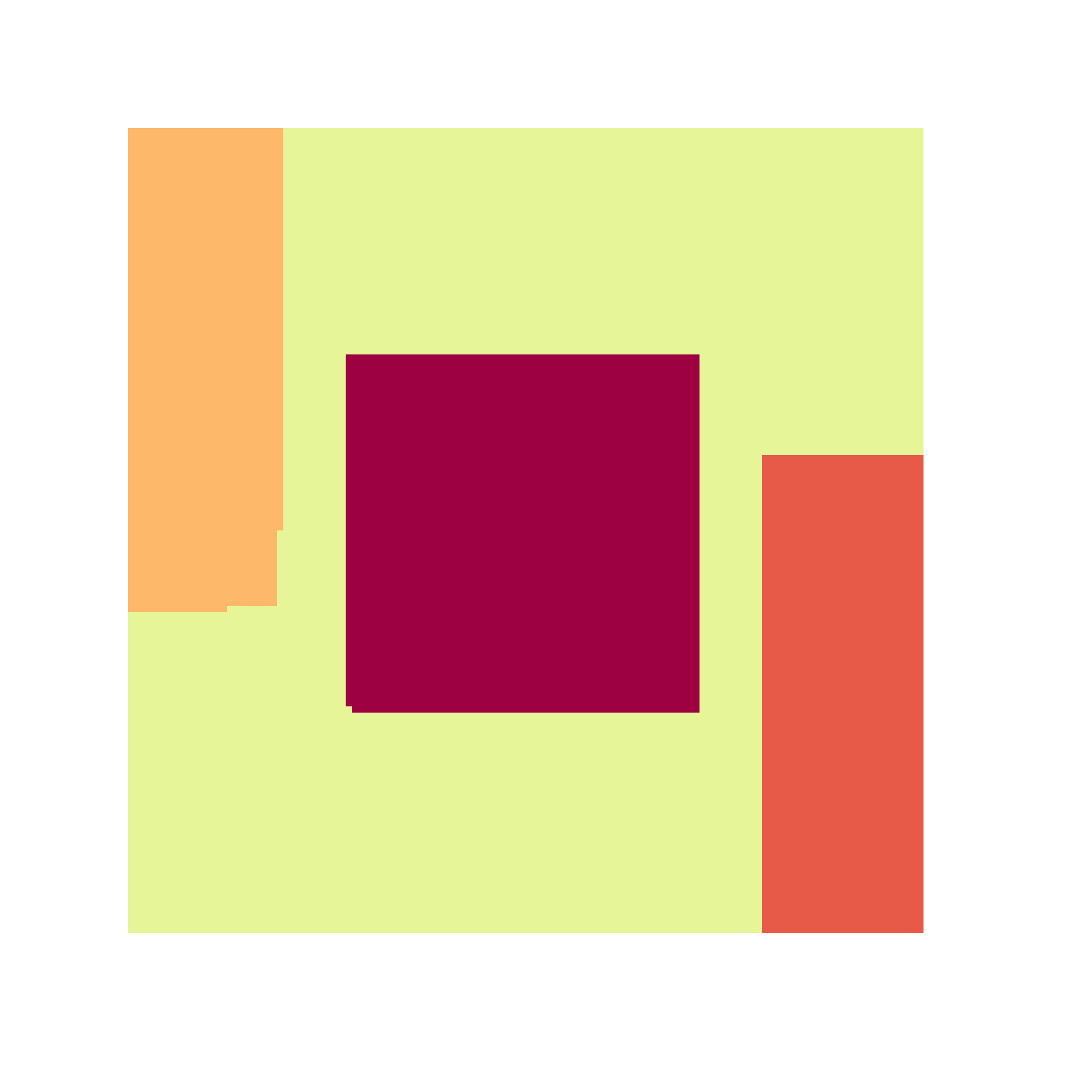}
		\caption{\label{fig1} Visualization of  Scenario 5.  From left to right: An instance of $y$, the signal $\theta^*$, DCART, and DCART after merging.  In this example the data are generated with $\sigma =1$.}
	\end{center}
\end{figure}

\begin{table}[t!]
	\centering
	\caption{\label{tab2} Performance  evaluations   over 50 repetitions under Scenario 5. The performance metrics $\text{dist}_1$ and $\text{dist}_2$ are defined in the text.   The numbers in parenthesis denote standard errors. }
	\medskip
	\setlength{\tabcolsep}{3.4pt}
	\begin{scriptsize}
		\begin{tabular}{|rr|rr|rr|rr|rr|rr|} 
			\hline
			\multicolumn{2}{|c|}{Setting} & 	\multicolumn{2}{c|}{$\text{dist}_1$}    & 	\multicolumn{2}{c|}{$\text{dist}_2$}\\
			&  $\sigma$            &  $\widehat{\Lambda}$                      &TV-based          &$\widehat{\Lambda}$   &     TV-based    \\
			\hline	   
			$5$            &       $0.5$ &              211.68(745.74)                                    &   \textbf{130.72(44.49)}   &    \textbf{ 0.0(0.27)}      &   0.12(0.33)    \\ 
			$5$            &       $1.0$ &        766.84(1281.38)           &   \textbf{398.92(362.39)}&   \textbf{0.0(0.57)}        &   1.32(0.9) \\
			$5$            &       $1.5$ &            1406.96(1589.79)                                   &\textbf{921.08(214.55)}  &      \textbf{0.0(0.65)}    &  2.68(1.31) \\
			\hline
			\hline  
		\end{tabular}
	\end{scriptsize}
\end{table}

Performance evaluations for Scenario 5 are given in Table \ref{tab2}. There, we can see that our proposed method provides the best estimation of the number of piecewise constant regions.

\begin{figure}[h!]
	\begin{center}
		\includegraphics[width=2.7in,height=2.2in]{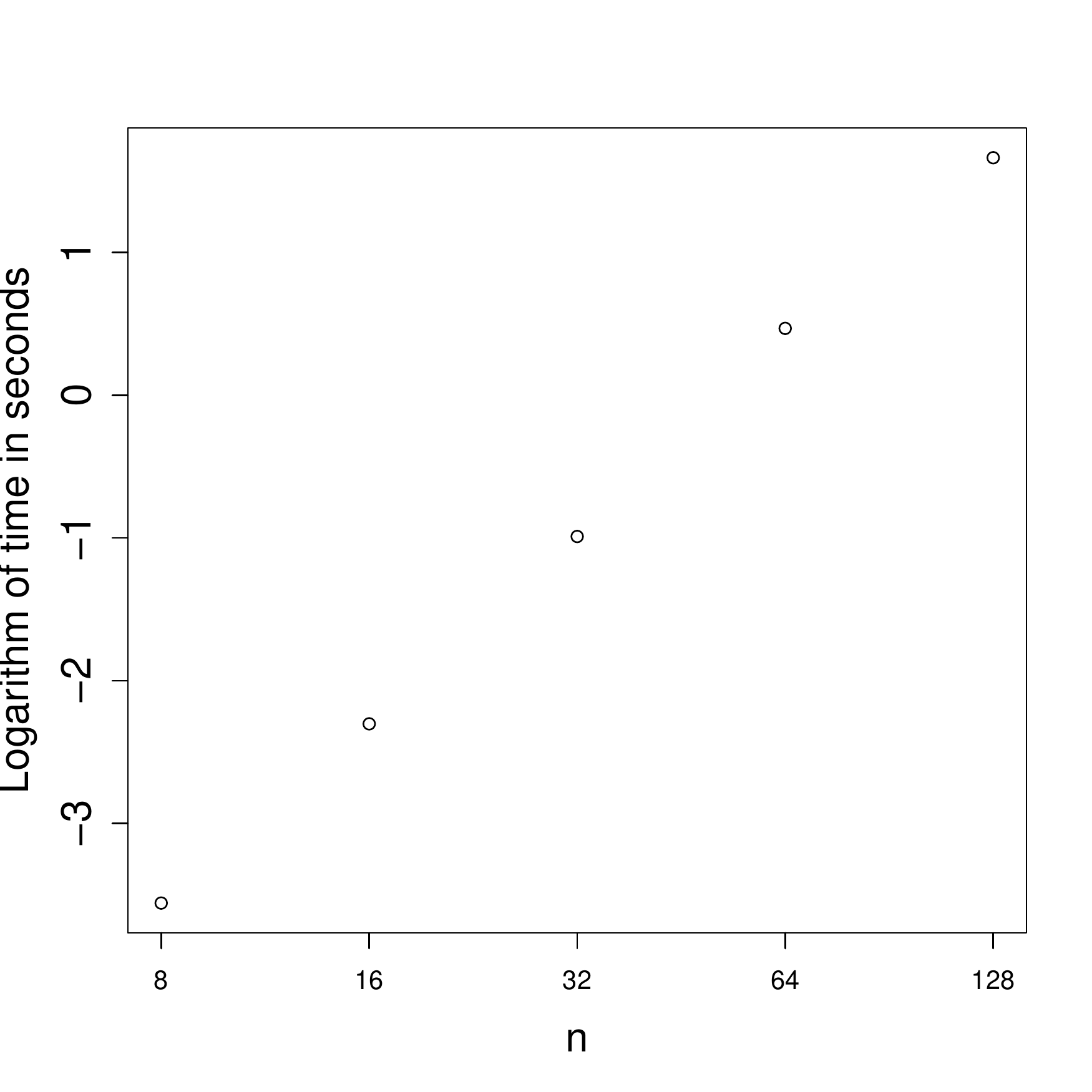} 
		\includegraphics[width=2.7in,height=2.2in]{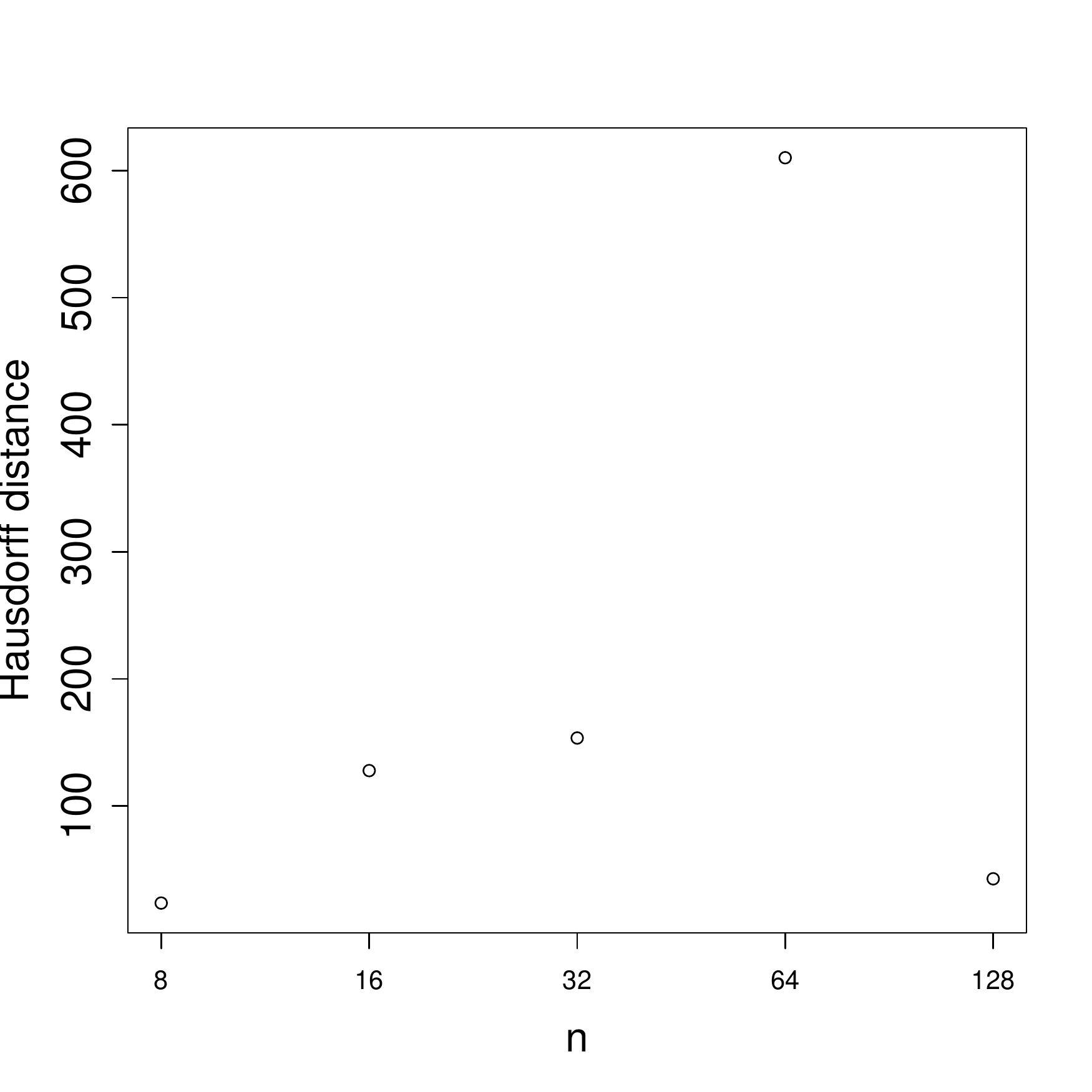} 
		\caption{\label{fig4} Time and Hausdorff distance evaluations, averaging over 50 Monte Carlo simulations, of $\widehat{\Lambda}$  for different values of $n$ for Scenario 5. Here $\sigma =1$.}
	\end{center}
\end{figure}

Finally, with the same implementation details as in Section \ref{sec-numerical} of the paper, we compute  the running time of $\widehat{\Lambda}$ for Scenario 5. The results are shown in Figure \ref{fig4} where we can clearly see a linear trend.

\section{ Non axis-aligned data}\label{sec-nonaxix}
We now briefly discuss how our method could be extended to non axis-aligned data.  Suppose that we are given measurements $\{(x_i,y_i)\}_{i=1}^N$  which are independent copies of a pair of random variables  $(X,Y) \in [0,1]^d\times \mathbb{R}$. Suppose that $n \asymp (N /\log N)^{1/d}$ with  $n \in \mathbb{N}$. Define  
\[
I_{i_1,\ldots\,i_d} \,:=\,  \left[ \frac{i_1-1}{n},  \frac{i_1}{n}\right]  \times \ldots \times \left[ \frac{i_d-1}{n},  \frac{i_d}{n}\right]  
\]
for $i_1,\ldots,i_d \in \{1,\ldots,n\}$.  Then define  $\tilde{y} \in \mathbb{R}^{L_{d,n}}$ as  
\[
\tilde{y}_{i_1,\ldots\,i_d} \,:=\,\frac{1}{ \vert  \{ j\,:\, y_j \in    I_{i_1,\ldots\,i_d} \}     \vert }\sum_{ j\,:\, y_j \in    I_{i_1,\ldots\,i_d}} y_j,
\]
if  $\vert  \{ j\,:\, y_j \in    I_{i_1,\ldots\,i_d} \}   \neq \emptyset$ and otherwise we  set  $  \tilde{y}_{i_1,\ldots\,i_d}  =      \tilde{y}_{i_1^{\prime},\ldots\,i_d^{\prime}} $ where  $I_{i_1^{\prime},\ldots\,i_d^{\prime}} $ is the closest rectangle to $I_{i_1,\ldots\,i_d} $
satisfying    $\vert  \{ j\,:\, y_j \in    I_{i_1^{\prime},\ldots\,i_d^{\prime}} \}   \neq \emptyset$. Both the choice of $n$ and the constrution of $\tilde{y}$ are inspired by ideas from \cite{madrid2020adaptive}.

After having constructed $\tilde{y} \in \mathbb{R}^{L_{d,n}}$ we can then run  DCART and our modified version.

\bibliographystyle{ims}
\bibliography{ref}

\end{document}